\documentclass[10pt,leqno]{amsart}
\usepackage{amsmath}
\usepackage{amscd,amsthm,amssymb,amsfonts}
\usepackage{mathrsfs}
\usepackage{dsfont}
\usepackage{stmaryrd}
\usepackage{euscript}
\usepackage{expdlist}
\usepackage{enumerate}

%\usepackage{enumitem}
%\usepackage[mathscr]{eucal}
% either use \mathscr (no euscript package), or use \EuScript (no eucal
% package), they are the same font
\input xy
\xyoption{all}
\setlength{\topmargin}{-0.4in}
\setlength{\headheight}{8pt} \setlength{\textheight}{9in}
\setlength{\oddsidemargin}{-0.15in}
\setlength{\evensidemargin}{-0.15in} \setlength{\textwidth}{6.6in}
\usepackage[OT2,T1]{fontenc}

% newcommands for thesis
%\renewcommand{\thesubsubsection}{\thesubsubsection}

%\def\subsubsection{\@startsection{subsubsection}{3}{\z@}{-3.25ex plus -1ex minus -.2ex}{1.5ex plus .2ex}{\normalsize\itshape}}
%\newcommand{\mysubsubsection}{\subsubsection}
% theorems, lemmas,..
\theoremstyle{plain}
\newtheorem{thm}{Theorem}[section]
\newtheorem{thmA}{Theorem}

\newtheorem*{thm*}{Theorem}

\newtheorem{lm}[thm]{Lemma}
\newtheorem{cor}[thm]{Corollary}
\newtheorem*{cor*}{Corollary}
\newtheorem{prop}[thm]{Proposition}
\newtheorem*{conj*}{Conjecture}

%proposal

 % Used in Introduction
 % Used in Introduction

\theoremstyle{remark}
\newtheorem*{remark}{Remark}
 % Used in Introduction
\newtheorem*{thank}{Acknowledgments}

\theoremstyle{definition}
 % Used in Introduction
\newtheorem*{defn*}{Definition}
\newtheorem{Remark}[thm]{Remark}
\newtheorem{defn}[thm]{Definition}

%for temporary use
\newcommand{\nc}{\newcommand}

%Convenient command
\newcommand{\beq}{\begin{equation}}
\newcommand{\eeq}{\end{equation}}
\newcommand{\bpmx}{\begin{pmatrix}}
\newcommand{\epmx}{\end{pmatrix}}
\newcommand{\bbmx}{\begin{bmatrix}}
\newcommand{\ebmx}{\end{bmatrix}}
\newcommand{\wh}{\widehat}
\newcommand{\wtd}{\widetilde}

\newcommand{\beqcd}[1]{\begin{equation*}\label{#1}\tag{#1}}
\newcommand{\eeqcd}{\end{equation*}}

\numberwithin{equation}{section}
\newenvironment{mylist}{
  \begin{enumerate}{}{%
      \setlength{\itemsep}{5pt} \setlength{\parsep}{0in}
      \setlength{\parskip}{0in} \setlength{\topsep}{0in}
      \setlength{\partopsep}{0in}
      \setlength{\leftmargin}{0.17in}}}{\end{enumerate}}

% references
\def\parref#1{\ref{#1}}
\def\thmref#1{Theorem~\parref{#1}}
\def\propref#1{Prop.~\parref{#1}}
\def\corref#1{Cor.~\parref{#1}}     \def\remref#1{Remark~\parref{#1}}
\def\secref#1{\S\parref{#1}}

\def\lmref#1{Lemma~\parref{#1}}
\def\subsecref#1{\S\parref{#1}}

\def\makeop#1{\expandafter\def\csname#1\endcsname
  {\mathop{\rm #1}\nolimits}\ignorespaces}
%\def\thmref#1{Theorem\hyperref[#1]{\ref{#1}}}

%Reserved Key words
\makeop{Hom}   \makeop{End}   \makeop{Aut}   %\makeop{Isom}
\makeop{Pic} \makeop{Gal}       \makeop{Div} \makeop{Lie}
\makeop{PGL}   \makeop{Corr} \makeop{PSL} \makeop{sgn} \makeop{Spf}
 \makeop{Tr} \makeop{Nr} \makeop{Fr} \makeop{disc}
\makeop{Proj} \makeop{supp} \makeop{ker}   \makeop{Im} \makeop{dom}
\makeop{coker} \makeop{Stab} \makeop{SO} \makeop{SL} \makeop{SL}
\makeop{Cl}    \makeop{cond} \makeop{Br} \makeop{inv} \makeop{rank}
\makeop{id}    \makeop{Fil} \makeop{Frac}  \makeop{GL} \makeop{SU}
\makeop{Trd}   \makeop{Sp} \makeop{Tr}    \makeop{Trd} \makeop{Res}
\makeop{ind} \makeop{depth} \makeop{Tr} \makeop{st} \makeop{Ad}
\makeop{Int} \makeop{tr}    \makeop{Sym} \makeop{can} \makeop{SO}
\makeop{torsion} \makeop{GSp} \makeop{Tor}\makeop{Ker} \makeop{rec}
\makeop{Ind} \makeop{Coker}
 \makeop{vol} \makeop{Ext} \makeop{gr} \makeop{ad}
 \makeop{Gr}\makeop{corank} \makeop{Ann}
\makeop{Hol} %Holomorphic
\makeop{Fitt} \makeop{Mp} \makeop{CAP}

%\makeop{Sel}

%\def\Ord{{\operatorname{ord}}}

%GU(3,3)

%\def\GO{{\rm GO}}
%\def\GSO{{\rm GSO}}

%\DeclareMathOperator{\Spec}{Spec}
\def\Spec{\mathrm{Spec}\,}

\DeclareMathAlphabet{\mathpzc}{OT1}{pzc}{m}{it}
\DeclareSymbolFont{cyrletters}{OT2}{wncyr}{m}{n}
\DeclareMathSymbol{\SHA}{\mathalpha}{cyrletters}{"58}

\def\makebb#1{\expandafter\def
  \csname bb#1\endcsname{{\mathbb{#1}}}\ignorespaces}
\def\makebf#1{\expandafter\def\csname bf#1\endcsname{{\bf
      #1}}\ignorespaces}
\def\makegr#1{\expandafter\def
  \csname gr#1\endcsname{{\mathfrak{#1}}}\ignorespaces}
\def\makescr#1{\expandafter\def
  \csname scr#1\endcsname{{\EuScript{#1}}}\ignorespaces}
\def\makecal#1{\expandafter\def\csname cal#1\endcsname{{\mathcal
      #1}}\ignorespaces}
% \cal is used in article, \mathcal is used in amsart

\def\doLetters#1{#1A #1B #1C #1D #1E #1F #1G #1H #1I #1J #1K #1L #1M
                 #1N #1O #1P #1Q #1R #1S #1T #1U #1V #1W #1X #1Y #1Z}
\def\doletters#1{#1a #1b #1c #1d #1e #1f #1g #1h #1i #1j #1k #1l #1m
                 #1n #1o #1p #1q #1r #1s #1t #1u #1v #1w #1x #1y #1z}
\doLetters\makebb   \doLetters\makecal  \doLetters\makebf
\doLetters\makescr
\doletters\makebf   \doLetters\makegr   \doletters\makegr

\def\Gm{{\bbG}_{m}}

\normalsize

\makeop{Ram} \makeop{Rep} \makeop{mass}

\makeop{Bl}
\def\abs#1{\left|#1\right|}
\def\norm#1{\lVert#1\rVert}
\def\Fpbar{\bar{\mathbb F}_p}

\def\Qbarp{\C_p}
\def\Qp{\Q_p}
\def\Qbar{\bar\Q}
\def\Zbar{\bar{\Z}}
\def\Zbarp{\Zbar_p}

\def\Zp{\Z_p}

\def\el{\ell}
%*************FONTS*****************

%basic notation

\def\rmT{{\mathrm T}}
\def\rmN{{\mathrm N}}

% mathcal
\def\cA{{\mathcal A}}  %automorphic forms
\def\cB{\EuScript B}
\def\cD{\mathcal D}
\def\cE{{\mathcal E}}
\def\cF{{\mathcal F}}  %Hida family

\def\cI{\mathcal I}

\def\cK{{\mathcal K}}  %imaginary quadratic field
\def\cM{\mathcal M}
\def\cR{{\mathcal R}}
\def\cO{\mathcal O}

\def\cf{{\mathcal f}}
\def\cW{{\mathcal W}}

\def\cQ{\mathcal Q}

% EuScript
\def\EucA{{\EuScript A}}

\def\EucE{{\EuScript E}}
  %Hida family

\def\EucO{{\EuScript O}}

%\mathbf for U(n,n)
\def\bfc{\mathbf c}

\def\bfM{\mathbf M}

\def\bda{\mathbf a}
\def\bff{\mathbf f}
\def\bdh{\mathbf h}

\def\bfi{\mathbf i}

\def\bdw{\mathbf w}

\def\bdc{\mathbf c}

\def\bfw{\mathbf w}

%boldsymbol

\def\bfdelta{\boldsymbol{\delta}}
\def\bftheta{\boldsymbol{\theta}}

% mathscr

\def\sL{\mathscr L}

\def\sS{\mathscr S}

% mathbb

 %p-adic modular forms

\def\bbI{\mathbb I}

\newcommand{\Z}{\mathbf Z}
\newcommand{\Q}{\mathbf Q}
\newcommand{\R}{\mathbf R}
\newcommand{\C}{\mathbf C}
\newcommand{\A}{\mathbf A}    % for adele

\def\bbE{{\mathbb E}}

\def\bbmu{\boldsymbol{\mu}}

% mathfrak

\def\fraka{{\mathfrak a}}
\def\frakb{{\mathfrak b}}
\def\frakc{{\mathfrak c}}

\def\frakq{\mathfrak q}

\def\frakm{\mathfrak m}

\def\frakl{\mathfrak l}

\def\frakF{{\mathfrak F}}
\def\frakE{\mathfrak E}

\def\frakI{\mathfrak I}

\def\frakC{{\mathfrak C}}

\def\frakX{\mathfrak X}

\def\frakR{\mathfrak R}
\def\frakS{\mathfrak S}

\def\frakN{\mathfrak N}
\def\il{\mathfrak i\frakl}

%\mathpzc

% one

 %trivial character

% underline
\def\ulA{\ul{A}}

\def\ulz{\ul{z}}

\def\ollam{\bar{\lam}}

%widetilde

%widehat

%*********bar

\def\Zhat{\hat{\Z}}
\def\wbar{\bar{w}}

\def\wbar{\bar{w}}
\def\zbar{\bar{z}}

%******************French words

%****************** long words
%\def\SHA{\mathrm{III}}

\def\ab{abelian variety }
\def\etale{{\'{e}tale }}
%\newcommand{\abs}{abelian varieties }

 % Pontryagin dual
\def\padic{\text{$p$-adic }}

\def\BS{Bruhat-Schwartz }

\def\Teich{Teichm\"{u}ller }
\def\Neron{N\'{e}ron }
\def\Frob{\mathrm{Frob}}
% moduli notation

% \cal, \frak, \mathbf.. check Math Sci.
% \mathcal, \mathfrak, \EuScript (euscript), \mathbb

% better notation

\newcommand{\<}{\langle}   %\< is not defined yet.
\renewcommand{\>}{\rangle} %\> is already defined.

  %\11 can't be used
\def\isoto{\stackrel{\sim}{\to}}

\def\ot{\otimes}

\def\hookto{\hookrightarrow}

\def\ol{\overline}  \nc{\opp}{\mathrm{opp}} \nc{\ul}{\underline}

%\nc{\embed}{\hookrightarrow}

%useful operators
\newcommand{\pair}[2]{\< #1, #2\>}

\newcommand{\pairing}{\pair{\,}{\,}}

% never number appeared in the newcommand name!

%XY matrix notation
\def\XYmatrix{\xymatrix@M=8pt} % make \xymatrix not too cluttered
\def\ncmd{\newcommand}
\ncmd{\xysubset}[1][r]{\ar@<-2.5pt>@{^(-}[#1]\ar@<2.5pt>@{_(-}[#1]}
\ncmd{\XYmatrixc}[1]{\vcenter{\XYmatrix{#1}}}
\ncmd{\xyto}[1][r]{\ar@{->}[#1]}
\ncmd{\xyinj}[1][r]{\ar@{^(->}[#1]}
\ncmd{\xysurj}[1][r]{\ar@{->>}[#1]}
\ncmd{\xyline}[1][r]{\ar@{-}[#1]}
\ncmd{\xydotsto}[1][r]{\ar@{.>}[#1]}
\ncmd{\xydots}[1][r]{\ar@{.}[#1]}
\ncmd{\xyleadsto}[1][r]{\ar@{~>}[#1]}
\ncmd{\xyeq}[1][r]{\ar@{=}[#1]} \ncmd{\xyequal}[1][r]{\ar@{=}[#1]}
\ncmd{\xyequals}[1][r]{\ar@{=}[#1]}
\ncmd{\xymapsto}[1][r]{l\ar@{|->}[#1]}\ncmd{\xyimplies}[1][r]{\ar@{=>}[#1]}
\ncmd{\xyiso}{\ar[r]_-{\sim}}
\def\injxy{\ar@{^(->}}

%*************************Matrices

\newcommand{\MX}[4]{\begin{bmatrix}
{#1}& {#2}\\
{#3}&{#4}\end{bmatrix} }

 \newcommand{\DII}[2]{\begin{bmatrix}{#1}&0
 \\0&{#2}\end{bmatrix}}

%Metaplectic group

\newcommand{\seesaw}[4]{{#1}\ar@{-}[rd]\ar@{-}[d]&{#2}\ar@{-}[d]\\
{#3}\ar@{-}[ru]&{#4}}

%Duals
 %Dual
 %Pontragin dual
 %Cartier dual

% *** ENGLISH ABBREVIATIONS ****************************************

% Some words

\def\ie{i.e. }

\def\cf{\mbox{{\it cf.} }}
\def\loccit{\mbox{{\it loc.cit.} }}

\def\can{{can}}

%\renewcommand{\O}{\cO}

%universal deformation of cyclotomoic character
%\newcommand{\deform}[1]{\tilde{{#1}}}

%Category
\def\ENS{\mathfrak E\mathfrak n\mathfrak s}
\def\SCH{\mathfrak S\mathfrak c\mathfrak h}

% for extending arrows

% right surjective maps

% right maps

% right injective maps

% exact sequence

% characteristic function of a set

%\newcommand{\ch}{characteristic }
\def\ch{{\mathbb I}}

%Local fields
\def\uf{\varpi} %uniformizer
\def\Abs{{|\!\cdot\!|}} %adelic absolute value
%\providecommand\abs[1]{\lvert{#1}\rvert}

% Control symbols

% abbreviation
\def\Sg{{\varSigma}}  %%CM type

\def\ndivides{\nmid}
\def\ndivide{\nmid}
\def\x{{\times}}

\def\onehalf{{\frac{1}{2}}}
 % episilon factor
\def\al{\alpha}

\def\kap{\kappa}
\def\om{\omega}
\def\dirlim{\varinjlim}
\def\prolim{\varprojlim}
\def\iso{\simeq}
\def\con{\equiv}
\def\bksl{\backslash}
\newcommand\stt[1]{\left\{#1\right\}}
\def\ep{\epsilon}

\def\l{{\ell}}

\def\lam{\lambda}
 % automotphic induction
\def\pii{\pi i}

\def\sg{\sigma}
\def\vp{\varphi}
\def\disjoint{\bigsqcup}
\def\bigot{\bigotimes}

\def\smid{\,|}

%Symbol for measures
\def\dx{d^\x}

% ideles and adeles

\def\AFf{\A_{\cF,f}}
\def\AKf{\A_{\cK,f}}
\def\AF{\A_\cF}
\def\AK{\A_\cK}
\def\setp{{(p)}}

\def\bbox{{(\Box)}}

\newcommand{\powerseries}[1]{\llbracket{#1}\rrbracket}

\renewcommand\pmod[1]{\,(\mbox{mod }{#1})}
\renewcommand\mod[1]{\,\mbox{mod }{#1}}

\renewcommand\Re{\text{Re}\,}
 %Diamond operator
\def\vphi{\varphi}
 %Hermitian matrix
\def\Cp{\C_p}

\def\alg{\mathrm{alg}} 
\setcounter{tocdepth}{1} \setcounter{secnumdepth}{3}
\def\OF{O}
\def\OK{R}
\def\OFv{O_v}
\def\OKv{R_v}

\def\adelef{\A_{\cF,f}}
\def\Section{\phi_{\ads,s,v}}
\def\uf{\varpi}
\def\units{\OFv^\x}

\def\wbar{\bar{w}}
\def\ads{\chi}
\def\nads{\ads^*}
\def\kap{\xi}
\def\NV{{\bf (NV)} }
\def\cmpt{\varsigma}
\def\cmptv{\varsigma_v}
\def\Cinert{\frakI}
\def\Cram{\frakR}
\def\Csplit{\frakF}
\def\ufl{\uf_{\frakl}}

\def\holES{\bbE^h_{\ads}}

\def\adsnu{\ads\nu}
\def\Cl{Cl}
\def\padicf{\EucE}
\def\ENS{SETS}
\def\SCH{SCH}
\def\frakE{\mathfrak E}

\def\antich{\frakX^-_\frakl}
\def\bR{\mathfrak O}
\def\baseR{\cW}
\def\frakN{N}
\def\ab{{\fraka,\frakb}}
\def\Gm{\mathbb G_m}
\def\pads{\widehat{\ads}}
\def\addchar{\psi}

\subjclass[2010]{11F67 11G15}
\thanks{The author is partially supported by National Science Council grant 98-2115-M-002-017-MY2}
\title[Non-vanishing of Hecke $L$-values modulo $p$]{%{\scshape\normalsize Mathematics Division, National Center for
%    Theoretical Sciences at Taipei}\\
 %        {\scshape\large NCTS/TPE-Math Technical Report 2010-004} \\~\\
On the non-vanishing of Hecke $L$-values modulo $p$}
\author[M.-L. Hsieh]{Ming-Lun Hsieh}
\date{August 12, 2012}
\address{ Department of Mathematics~\\National Taiwan University ~ \\
No. 1, Sec. 4, Roosevelt Road, Taipei 10617, Taiwan~
%Taipei, Taiwan ~
}
\email{mlhsieh@math.ntu.edu.tw}
\begin{document}
\begin{abstract}In this article, we follow Hida's approach to establish an analogue of Washington's theorem on the non-vanishing modulo $p$ of Hecke $L$-values for CM fields with anticyclotomic twists
\end{abstract}
\maketitle
\tableofcontents
\def\ZZbox{\Z_{(\Box)\,}}
\def\cAbox{\cA^{(\Box)}_{K,\bdc}}
\def\cAboxn{\cA^{(\Box)}_{K,\bdc,n}}
\def\Zhatbox{\widehat{\Z}^{(\Box)}}
\def\qchKF{\tau_{\cK/\cF}}
\def\opcpt{K}
\def\sh{Sh}
\def\opn{K^n}
\def\lp{j}
\def\lpp{\eta^{(p)}}
\def\lsgN{\opcpt_1^n}
\def\Om{\boldsymbol{\omega}}
\def\wt{k}
\def\skewhf{\vartheta}
\def\Fv{F}
\def\Kv{E}
\def\nh{{n.h.}}%nearly holomorphic

\def\Ig{I}
\def\wbar{\ol{w}}
\def\Katzd{\theta}
\def\norm#1{\rmN_{\cK/\cF}(#1)}
\def\Beth{\eta}
\def\bR{\EucO}
\def\OKbasis{\bftheta}
\def\Tr{\rmT}

\section*{Introduction}
The purpose of this paper is to study the non-vanishing modulo $p$ property of Hecke $L$-values for CM fields via arithmetic of Eisenstein series. Let $\cF$ be a totally real field of degree $d$ over $\Q$ and $\cK$ be a totally imaginary quadratic extension of $\cF$. Let $\Sg$ be a CM type of $\cK$. Then we can attach the CM period $\Omega_\infty=(\Omega_{\infty,\sg})_{\sg}\in(\C^\x)^\Sg$ to a \Neron differential on an abelian scheme $\EucA_{/\Zbar}$ of CM type $(\cK,\Sg)$. Let $p>2$ be a rational prime and let $\ell\not =p$ be a rational prime and $\frakl$ be a prime of $\cF$ above $\ell$. Let $c$ be the nontrivial element in $\Gal(\cK/\cF)$. We fix an arithmetic Hecke character $\ads$ of $\cK^\x$
with infinity type $k\Sg+\kappa(1-c)$, where $k$ is a positive
integer and $\kappa=\Sigma_{\sg\in\Sg}\kappa_\sg\sg$ with integers
$\kappa_\sg\geq 0$. For a multi-index $\kappa=\sum_{\sg\in\Sg}\kappa_\sg\sg\in\Z[\Sg]$, we write $\Omega_\infty^\kappa=\Omega_{\infty,\sg}^{\kappa_\sg}$ and $a^\kappa=a^{\sum_\sg \kappa_\sg}$ for $a\in\C^\x$.

Let $\cK_{\frakl^n}$ be the ray class field of conductor $\frakl^n$ and let $\cK_{\frakl^\infty}=\cup_n\cK_{\frakl^n}$. Let $\cK^-_{\frakl^\infty}$ be the maximal pro-$\ell$ anticyclotomic extension of $\cK$ in $\cK_{\frakl^\infty}$ and let $\Gamma^-=\Gal(\cK^-_{\frakl^\infty}/\cK)$. Let $\antich$ be the set of finite order characters of $\Gamma^-$.
For every $\nu\in\antich$, we consider the complex number
\[L^{\alg,\frakl}(0,\ads\nu):=\frac{\pi^{\kappa}\Gamma_\Sg(k\Sg+\kappa)L^{(\frakl)}(0,\ads\nu)}{\Omega_\infty^{k\Sg+2\kappa}},\]
where $\Gamma_\Sg(k\Sg+\kappa)=\prod_{\sg\in\Sg}\Gamma(k+\kappa_\sg)$. It is known that $L^{\alg,\frakl}(0,\ads\nu)\in\Zbar_\setp$ if $p$ is uramified in $\cF$ and prime to the conductor of $\ads$ . We are interested in the non-vanishing property of
$L^{\alg,\frakl}(0,\ads\nu)$ modulo $p$ when $\nu$ varies in $\antich$. To be precise, we fix two embeddings
$\iota_\infty:\Qbar\hookto\C$ and $\iota_p:\Qbar\hookto\Qbarp$ once
and for all and let $\frakm$
be the maximal ideal of $\Zbar_{(p)}$ induced by $\iota_p$. We ask if the following non-vanishing modulo $p$ property holds for $(\ads,\frakl)$.
\beqcd{NV}\iota_\infty^{-1}(L^{\alg,\frakl}(0,\ads\nu))\not\con
0\mod{\frakm}\text{ for \emph{almost all} }\nu\in\antich.
\eeqcd
%\begin{description}[\breaklabel\setlabelstyle{\bfseries}]
%\item[(NV)] $\iota_\infty^{-1}(L^{\alg,\frakl}(0,\ads\nu))\not\con
%0\mod{\frakm}$ for \emph{almost all} $\nu\in\antich$.
%\end{description}
Here \emph{almost all} means "except for finitely many $\nu\in\antich$" if $\dim_{\Q_\ell} F_\frakl=1$ and "Zariski dense subset of $\antich$" if $\dim_{\Q_\ell}F_\frakl>1$ (See \cite[p.737]{Hida:nonvanishingmodp}).

This problem has been studied extensively by Hida for general CM fields in \cite{Hida:nonvanishingmodp} and \cite{Hida:nonvanishingnew} under
the hypothesis that $\Sg$ is $p$-ordinary and by T. Finis in \cite{Finis:nonvanishingell}
for imaginary quadratic fields under a different hypothesis. Let $\qchKF$ be the quadratic character associated to $\cK/\cF$ and $\cD_{\cK/\cF}$ be the different of $\cK/\cF$. Let $\frakC$ be the conductor of $\ads$. The following theorem is proved by Hida in \cite{Hida:nonvanishingnew}.
\begin{thm*} Suppose that $\Sg$ is $p$-ordinary and $p>2$ is unramified in $\cF$. If $(p\frakl,\frakC)=1$ and $\frakC$ is a product of split prime factors over $\cF$, then \NV holds for $(\ads,\frakl)$ unless the following three conditions are
satisfied simultaneously:\begin{itemize}
\item[(M1)] $\cK/\cF$ is unramified everywhere,
\item[(M2)] $\qchKF(\frakc)$ has value $-1$, where $\frakc$ is the polarization ideal of $\EucA_{/\Zbar}$,
\item[(M3)] For all ideal $\fraka$ of $\cF$ prime to $p\frakC$, $\chi
\bfN_{\cF/\Q}(\fraka)\con \qchKF(\fraka)\pmod{\frakm}$.
\end{itemize}
\end{thm*}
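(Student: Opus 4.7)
The plan is to follow Hida's approach via the arithmetic of Eisenstein series combined with a Zariski-density argument, with the three exceptional conditions (M1)--(M3) emerging precisely as the obstruction to non-vanishing of a certain Fourier coefficient modulo $\frakm$.

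First I would package the family $\{L^{\alg,\frakl}(0,\ads\nu)\}_{\nu\in\antich}$ as the specialisations of a single Katz-type $p$-adic measure $\padicES$ on $\Gamma^-$. Concretely, one starts from an integral representation realising $L(0,\ads\nu)$ as the value at the CM point attached to $(\EucA_{/\Zbar},\skewhf)$ of a classical Hilbert Eisenstein series $\bbE^h_{\ads\nu}$, and then replaces the complex evaluation by Katz's $p$-adic evaluation on the Igusa tower for $\mathrm{Res}_{\cF/\Q}\mathrm{GL}_2$. The $p$-ordinariness of $\Sg$ together with $p$ unramified in $\cF$ ensures that the complex period $\Omega_\infty$ and Katz's $p$-adic CM period agree up to $p$-adic units, so that the measure interpolates $\iota_\infty^{-1}(L^{\alg,\frakl}(0,\ads\nu))$ exactly.

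The second step reduces \NV to the single assertion that $\padicES\not\equiv 0\pmod{\frakm}$. Since finite-order characters of $\Gamma^-$ are Zariski-dense in $\Spec \Zbarp\powerseries{\Gamma^-}$, non-vanishing of $\padicES$ modulo $\frakm$ implies that $\{\nu\in\antich:\nu(\padicES)\equiv 0\pmod{\frakm}\}$ is a proper Zariski-closed subset of $\antich$, which is exactly the meaning of ``almost all'' recalled from \cite[p.\,737]{Hida:nonvanishingmodp}.

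The main step and the main obstacle is to prove $\padicES\not\equiv 0\pmod{\frakm}$. Here I would apply the $q$-expansion principle for mod $p$ Hilbert modular forms at a judiciously chosen cusp lying in the same geometric connected component as the CM point $[\EucA,\skewhf]$. After explicit computation of the local zeta integrals of $\bbE^h_{\ads\nu}$ at the split primes dividing $\frakC$ (where the split hypothesis converts them into explicit Gauss-type sums) and at the auxiliary prime $\frakl$, a generic Fourier coefficient reduces modulo $\frakm$ to a character sum over integral ideals $\fraka$ of $\cF$ coprime to $p\frakC$ whose class is constrained by the polarization ideal $\frakc$. If any of (M1)--(M3) fails, character orthogonality over the relevant ray class group produces at least one non-vanishing coefficient modulo $\frakm$; conversely, under (M1)--(M3) simultaneously, the congruence $\ads\bfN_{\cF/\Q}\equiv\qchKF\pmod{\frakm}$, combined with the class-restriction from $\frakc$ and the everywhere-unramifiedness of $\cK/\cF$, forces every such coefficient to vanish, recovering the exceptional case. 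The hardest inputs are the precise tracking of local factors at primes in $\frakC$ and at $\frakl$ so that no spurious $p$-divisibility appears, and the compatibility between the chosen cusp and the geometric connected component of the CM point --- the origin of conditions (M1) and (M2).
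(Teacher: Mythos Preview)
This theorem is not proved in the present paper: it is quoted from Hida \cite{Hida:nonvanishingnew} as background, and the paper then establishes its own Theorems~A and~B by adapting Hida's method. So there is no ``paper's own proof'' to compare against line by line; what the paper does contain is a description of Hida's strategy in the introduction and an implementation of it (under different hypotheses) in \S\S4--6.

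That said, your sketch has a genuine gap in the second step. You write that non-vanishing of the measure $\padicES$ modulo $\frakm$ implies \NV via Zariski density of finite-order characters in $\Spec\Zbarp\powerseries{\Gamma^-}$. This is not how the reduction works. The measure is built from values of an Eisenstein series at CM points on the Igusa tower, and the bridge from ``Eisenstein series $\not\equiv 0\pmod{\frakm}$'' to ``$\int\nu\,d\varphi\not\equiv 0$ for almost all $\nu$'' is Hida's Theorem~3.2/3.3 in \cite{Hida:nonvanishingmodp} (quoted here as \thmref{T:1.N}). That theorem requires the much sharper hypothesis~(H): for \emph{every} residue class $u\in O$ prime to $\frakl$ and every $r$, one can find $\beta\equiv u\pmod{\frakl^{r}}$ with $\bfa_\beta(\EucE^\cR,\frakc)\not\equiv 0\pmod{\frakm}$. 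Its proof uses Zariski density of CM points in the mod~$p$ Hilbert modular variety together with a variant of Sinnott's argument, not density of characters in the Iwasawa algebra. Your third step is closer to the truth, but the point is that it is not a separate step after establishing $\padicES\not\equiv 0$; rather, verifying~(H) \emph{is} the reduction, and the exceptional conditions (M1)--(M3) arise (as in the paper's \propref{P:2.N}) precisely when the argument that produces a $\beta$ with nonzero coefficient in every residue class breaks down, forcing $\ol{\ads}_+\omega_\cF^{-1}\equiv\qchKF$.
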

We shall say $\chi$ is \emph{residually self-dual} if the condition (M3) holds for $\chi$. By \cite[Lemma 5.2]{Hida:mu_invariant}, the hypotheses (M1-3) is equivalent to the condition (V): $\chi$ is residually self-dual, and the root number associated to $\chi$ is congruent to $-1$ modulo $\frakm$.

We are mainly concerned about the \NV property of \emph{self-dual} characters. Recall that $\ads$ is self-dual if $\ads|_{\AF^\x}=\qchKF\Abs_{\AF}$. Such characters are of its own interest
because an important class of them arises from Hecke characters associated to CM abelian varieties over
totally real fields (\cf \cite[Thm.20.15]{Shimura:ABV-with-CM}). Note that as the conductor of self-dual characters by definition is divisible by ramified primes, these characters in general are not covered in Hida's theorem unless $\cK/\cF$ is unramified. Our main motivation for the \NV property of self-dual characters is the application to Iwasawa main conjecture for CM fields (\cf \cite{Hida:nonvanishingnew} and \cite{Hsieh:ESU21}).
In our subsequent work \cite{Hsieh:ESU21}, this property is used to show the non-vanishing modulo $p$ of the period integral of certain theta functions which is related to Fourier-Jacobi coefficients of Eisenstein series on unitary groups of degree three. When $\cK$ is an imaginary quadratic field and $\frakl$ splits in $\cK$, the problem of the non-vanishing modulo $p$ of Hecke $L$-values associated to self-dual characters has been
solved completely by T. Finis in \cite{Finis:nonvanishingell} through direct study on the period integral of theta functions modulo $p$ (self-dual characters are called \emph{anticyclotomic} in \cite{Finis:nonvanishingell}).

We shall state our main result after preparing some notation. Write $\frakC=\frakC^+\Cinert\Cram$, where $\frakC^+$, $\Cinert$ and $\Cram$ are a product of split, inert and ramified prime factors over $\cF$ respectively. Let $v_p$ be the \padic valuation induced by $\iota_p$. For each $v|\frakC^-$, let $\mu_p(\ads_v)$ be the local invariant defined by
\[\mu_p(\ads_v):=\inf_{x\in\cK_v^\x}v_p(\ads(x)-1).\]
Note that $\mu_p(\ads_v)$ agrees with the one defined in \cite{Finis:nonvanishingell} when $\ads$ is self-dual.
Following Hida, we make the following hypotheses for $(p,\cK,\Sg)$:
\begin{align}\label{unr}\tag{unr}&\text{$p>2$ is unramified in $\cF$};\\
\label{ord}\tag{ord}&\text{$\Sg$ is $p$-ordinary}.
\end{align}
\def\HypNVI{\begin{mylist}
\item[\rm{(L)}] $\mu_p(\ads_v)=0$ for every $v|\frakC^-,$
\item[\rm{(R)}] The global root number $W(\nads)=1$, where $\nads:=\ads\Abs^{-\onehalf}_{\AK},$
\item[\rm{(C)}] $\Cram$ is square-free.
\end{mylist}}
Our main result is as follows.
\begin{thmA}Let $\ads$ be a self-dual Hecke character of $\cK^\x$ such that \HypNVI
In addition to \eqref{unr}, \eqref{ord}, we further assume
\begin{itemize}\item  $(p\frakl,\cD_{\cK/\cF}\frakC)=1$,
\item $\frakl$ splits in $\cK$.
\end{itemize}
Then \NV holds for $(\ads,\frakl)$.% if $\frakC^-$ satisfies the following condition:\HypConductor
\end{thmA}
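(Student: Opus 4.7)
The plan is to follow Hida's strategy: realize the normalized Hecke $L$-values $L^{\alg,\frakl}(0,\ads\nu)$ as values of a single $p$-adic Eisenstein series on the Hilbert modular Igusa tower at a Galois orbit of CM points $\{x_\nu\}_{\nu\in\antich}$, and then deduce non-vanishing modulo $\frakm$ from an explicit Fourier coefficient computation combined with a density theorem for CM points.

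First I would construct a $p$-adic Eisenstein series $\padicES$ on the ordinary locus of the Hilbert modular variety for $\cF$ whose specialization at $x_\nu$ equals $L^{\alg,\frakl}(0,\ads\nu)$ up to a $p$-adic unit. This requires a careful choice of local Schwartz sections at each place $v$ of $\cF$: at split primes and at primes dividing $\frakC^+$ the standard Hida sections; at $\frakl$, using that $\frakl$ splits in $\cK$, a section that removes the Euler factor; at inert places in $\Cinert$ a Finis-type section; and at ramified places in $\Cram$ a new section adapted to the self-dual setting. Condition (R) is needed to rule out a trivial vanishing forced by the functional equation, while (unr) and (ord) ensure integrality over $\baseR$ and the existence of the $p$-adic interpolation of $\holES$.

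The key technical step is to compute one Fourier coefficient $a(\beta,\padicES)$ at a suitable cusp and show that it is a unit in $\baseR/\frakm$. By the Fourier expansion, $a(\beta,\padicES)$ factors as a product of local orbital integrals. At split primes and at $\frakl$ these are essentially values of $\ads$ on explicit elements, hence units; at inert places they are handled as in \cite{Hida:nonvanishingnew}. The principal difficulty lies at ramified primes $v\in\Cram$: here I would exploit hypothesis (C), $\Cram$ square-free, to reduce the integral to a tame local computation, and then apply (L), $\mu_p(\ads_v)=0$, to conclude that the result is a unit modulo $\frakm$. Together with the $\frakq$-expansion principle for Hilbert modular forms mod $p$, this yields $\padicES\not\equiv 0\pmod{\frakm}$.

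To conclude, I would invoke Hida's Zariski density theorem for the $\Gamma^-$-orbit of the base CM point $x_0$ in the Igusa tower modulo $\frakm$, which applies precisely because $\frakl$ splits in $\cK$ (so that the anticyclotomic Galois action has an $\frakl$-adic component moving $x_0$ densely in its formal neighborhood). If \NV failed for $(\ads,\frakl)$, the density of the locus $\{x_\nu\}$ on which $\padicES$ vanishes mod $\frakm$ would force $\padicES\equiv 0\pmod{\frakm}$, contradicting the previous step. The main obstacle is the local analysis at ramified places $v\in\Cram$: the Schwartz section there must simultaneously produce the correct local zeta factor of $L(s,\ads\nu)$, pair correctly with the CM data at $v$, and yield a Fourier coefficient that is a $p$-adic unit — none of which is covered by Hida's earlier work, where the conductor was purely split.
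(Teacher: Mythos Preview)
Your overall strategy is the paper's strategy, but you are missing the central technical step, and you misattribute the roles of several hypotheses.

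The real difficulty is not ``compute one Fourier coefficient and show it is a unit''; it is that Hida's criterion (\thmref{T:1.N}) demands, for \emph{every} residue class $u\in O_\frakl$ and every $r$, a $\beta\equiv u\pmod{\frakl^r}$ and a cusp $\frakc=\frakc(\fraka)$ (the polarization ideal of some CM point) with $\bfa_\beta(\holES,\frakc)\not\equiv 0\pmod{\frakm}$. The coefficient $\bfa_\beta(\holES,\frakc)$ is a product over \emph{all} places; at generic $v\in S^\circ$ the factor is $(1-\nads(\uf_v)^{v(\beta\bfc_v)+1})/(1-\nads(\uf_v))$, so one must arrange $v(\beta\bfc_v)=0$ there, which forces $(\beta)\frakc(\OK)$ to be a norm from $\cK$ away from $\frakC^-$. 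The paper achieves this by an \emph{epsilon dichotomy} argument (\propref{P:main.N}): at each $v|\frakC^-$ one must find $\eta_v$ satisfying simultaneously $A_{\eta_v}(\ads_v)\not\equiv 0\pmod{\frakm}$ \emph{and} the sign condition $W(\nads_v)\qchKF(\eta_v)=\nads_v(2\skewhf)$. The global root number hypothesis (R) then forces $\qchKF(\eta)=1$, so $\eta=\beta\rmN_{\cK/\cF}(a)$ with $\beta\in\cF_+$, and \lmref{L:MSROOT.N} guarantees the parity condition at inert places that makes $\frakc=\frakc(\fraka)$. Your proposal does not mention this matching argument at all; without it you cannot produce the required $\beta$ and $\frakc$.

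Relatedly, your attributions are inverted. The splitting of $\frakl$ is \emph{not} needed for Hida's density theorem; it is needed so that the dichotomy $W(\nads_\frakl)\qchKF(\eta_\frakl)=\nads_\frakl(2\skewhf)$ holds automatically for \emph{any} $\eta_\frakl\equiv u\pmod{\frakl^r}$ (since $\qchKF$ is trivial at split $\frakl$); see the paper's final remark. Condition (L) is used at \emph{inert} places via \lmref{L:2.N} to produce $\eta_v$ with the correct valuation $v(\eta_v)=-w(\frakC^-)$, which in turn matches the root number by \lmref{L:MSROOT.N}(3); inert conductor is not ``handled as in Hida'', it is new here. At \emph{ramified} places the argument does not use (L): one chooses $\eta_v$ by the root number condition and then (C) (so $v\nmid 2$) allows the explicit formula \propref{P:formulaRamified.N}(3) to collapse $A_{\eta_v}(\ads_v)$ to $2\nads_v(\skewhf)\ads_v(-2^{-1}d_F^{-1})\abs{\uf}^{1/2}$, visibly a unit.
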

Note that as $\ads$ is self-dual, the assumption (R) is equivalent to Hida's condition (V). Indeed, the assumptions (L) and (R) are necessary for the \NV property. The assumption (R) is due to the functional equation of the complex $L$-function $L(s,\ads)$, and the failure of \NV without (L) has been observed by Gillard (\cf\cite[Theorem 1.1]{Finis:nonvanishingell}). We remark that our result in particular can be applied to Hecke characters attached to certain CM elliptic curves over totally real fields. For example, let $E$ be an elliptic curve over $\cF$ with CM by an imaginary quadratic field $\cM$. Let $\cK=\cF\cM$ and let $\ads$ be the Hecke character of $\cK^\x$ such that $L(s,\ads^{-1})=L(E_{/\cF},s)$. Then it is well known that the assumptions (L) and (C) hold if $(\cD_{\cK/\cF},\#(\cO_\cM^\x))=1$ and $p>3$. In general, (C) is expected to be unnecessary. The very reason we impose them is due to the difficulty of the computation of certain Gauss sums $A_\beta(\ads)=A_\beta(\ads_s)|_{s=0}$ defined in \eqref{E:2.N}. We leave the removal of (C) to our forthcoming paper \cite[\S 6]{Hsieh:VMU}.

\bigskip
We also consider the case $\ads$ is not residually self-dual. In particular, this implies the failure of (V). We prove the following result in \corref{C:1.N}, which gives a partial generalization of Hida's theorem.
\begin{thmA}Suppose that \eqref{unr}, \eqref{ord} and $(p\frakl,\cD_{\cK/\cF}\frakC)=1$. Suppose further that the following conditions hold:\begin{itemize}
\item[(L)] $\mu_p(\ads_v)=0$ for every $v|\frakC^-,$
\item[(N)] $\ads$ is not residually self-dual.
\end{itemize}
Then \NV holds for $(\ads,\frakl)$.
\end{thmA}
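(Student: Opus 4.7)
The plan is to run the Eisenstein-series machinery developed in the body of the paper for the first Theorem~A, but with the verification of non-vanishing of the Fourier coefficients mod $\frakm$ now supplied by hypothesis (N) in place of the delicate ramified analysis used under self-duality. First I would attach to $\ads$ the $p$-adic Eisenstein measure $\padicES_{\ads}$ of the paper, viewed as a $p$-adic modular form on the Hilbert-modular Igusa tower over $\cF$; the interpolation formula established earlier shows that its value $\padicES_{\ads}(x_\nu)$ at the CM point $x_\nu$ associated to $\nu\in\antich$ equals $\iota_\infty^{-1}(L^{\alg,\frakl}(0,\ads\nu))$ up to an explicit $p$-adic unit controlled by the $\frakl$-part and by (L). Thus (NV) for $(\ads,\frakl)$ is equivalent to the non-vanishing of the mod $\frakm$ reduction of $\padicES_{\ads}$ on a Zariski-dense subset of $\{x_\nu\}_{\nu\in\antich}$.

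Next I would analyze the $q$-expansion of $\padicES_{\ads}$ modulo $\frakm$. Each Fourier coefficient at a totally positive $\beta\in\cF$ factors into a product of local Gauss-sum integrals $A_{\beta,v}(\ads_v)$, which are trivial at places away from $p\frakC\frakl$. At split primes dividing $\frakC^+$ the standard computation yields a $p$-unit, and at $v\mid\frakC^-$ the assumption $\mu_p(\ads_v)=0$ from (L) forces $v_p(A_{\beta,v}(\ads_v))=0$ for a suitable choice of $\beta$. The obstruction (V) that appears in Hida's theorem and in Theorem~A arises because, when $\ads\equiv\qchKF\Abs_{\AF}\pmod{\frakm}$, the global character sum assembling these local pieces can collapse to zero mod $\frakm$. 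Under (N) the residual character $\ads\,\qchKF^{-1}\Abs_{\AF}^{-1}$ is non-trivial modulo $\frakm$, and a direct orthogonality argument on the relevant finite ray class quotient produces some $\beta\in\cF_+$ with $A_\beta(\ads)\not\equiv 0\pmod{\frakm}$.

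Finally, I would invoke the rigidity principle for $p$-adic modular forms on the Igusa tower (Hida's $q$-expansion principle combined with the density of the CM orbit $\{x_\nu\}_{\nu\in\antich}$ established earlier in the paper for Theorem~A) to propagate the non-vanishing of the $q$-expansion mod $\frakm$ to the non-vanishing of $\padicES_{\ads}(x_\nu)$ for almost all $\nu\in\antich$, which is exactly the statement (NV). The main obstacle in this programme is the character-sum computation at ramified primes underlying the non-vanishing of $A_\beta(\ads)$ modulo $\frakm$; in the residually self-dual case this is the delicate computation that motivates the square-freeness assumption (C) in the first Theorem~A, whereas under (N) the residual non-triviality of $\ads\,\qchKF^{-1}\Abs_{\AF}^{-1}$ makes the orthogonality step immediate, independent of the splitting behaviour of $\frakl$ in $\cK$, so that neither (C) nor the splitting hypothesis on $\frakl$ is needed.
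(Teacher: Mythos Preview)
Your overall architecture matches the paper's: build the Eisenstein measure, reduce (NV) via Hida's density theorem for CM points (\thmref{T:1.N}) to the non-vanishing modulo $\frakm$ of some Fourier coefficient $\bfa_\beta(\holES,\frakc)$, and then verify this non-vanishing using (L) and (N). Your identification of the role of (L) is also correct: it is exactly what the paper uses (via \lmref{L:2.N}) to guarantee that for each $v\mid\frakC^-$ there is some $\eta_v$ with $A_{\eta_v}(\ads_v)\not\equiv 0\pmod{\frakm}$.

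Where your proposal is imprecise is in the mechanism by which (N) enters. There is no ``global character sum assembling the local pieces'' and no orthogonality relation on a ray class quotient that produces the desired $\beta$. The Fourier coefficient is a \emph{product}, not a sum, over places, and (N) is not used at $v\mid\frakC^-$ at all. Rather, once the bad-place factors are fixed to be units using (L), the remaining obstruction sits at the \emph{unramified} places $v\in S^\circ$, where the local Whittaker factor is the partial geometric series $(1-\nads(\uf_v)^{v(\beta\bfc_v)+1})/(1-\nads(\uf_v))$. The paper's argument (\propref{P:2.N}, following Hida) is contrapositive: if $\bfa_\beta(\holES,\frakc)\equiv 0$ for every admissible $\beta$, then by running $\beta$ over primes in a fixed idele class one forces $\nads(\uf_v)\equiv -1\pmod{\frakm}$ for all such primes; varying the polarization class $\frakc=\frakc(\fraka)$ then yields $\ol{\ads}_+\om_\cF^{-1}=\qchKF$, contradicting (N). This is a Chebotarev/density-style argument on Frobenii, not an orthogonality argument. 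If you rewrite your middle paragraph to reflect this, your proposal becomes a faithful outline of the paper's proof (which is \corref{C:1.N}, obtained by combining \propref{P:2.N} with \lmref{L:2.N}).
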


The proof is based on Hida's ideas in \cite{Hida:nonvanishingmodp}, where Hida provided a general
strategy to study the problem of the non-vanishing of Hecke
$L$-values modulo $p$ via a study on the Fourier coefficients of Eisenstein series. The starting point of Hida is Damerell's formula, which relates a sum of suitable Eisenstein series evaluated at CM points to Hecke $L$-values for CM fields. And then he proves a key result on Zariski density of CM points in Hilbert modular varieties modulo $p$, by which he is able to reduce the problem to non-vanishing of an Eisenstein series modulo $p$ using a variant of Sinnot's argument. The assumption that $\frakC$ is a product of
split primes solely results from the difficulty of the calculation of
Fourier coefficients of Eisenstein series. Following Hida's strategy, we first construct an Eisenstein measure which
interpolates the Hecke $L$-values by the evaluation at CM points. The construction of our Eisenstein measure is from representation theoretic point of view, and Damerell's formula is actually a period integral of Eisenstein series against a non-split torus. Fourier coefficients of our Eisenstein series are decomposed into a product of local Whittaker integrals. Through an explicit calculation of these local integrals, we find that some Fourier coefficient is non-zero modulo $p$ provided that certain epsilon dichotomy holds (See \propref{P:main.N}).

Here is the outline of this article. We fix notation and recall some basic facts about Hilbert modular varieties and CM points in the first three sections. We basically follow the exposition in \cite{Hida:nonvanishingmodp} except that we use an adelic description of CM points.
Readers who are familiar with \cite{Hida:nonvanishingmodp} may begin with \secref{S:ES}, which is the bulk of this paper. In \secref{S:ES}, we give the construction of Eisenstein series and the calculation of some local Whittaker integrals. The formulas of the key integrals $\wtd A_\beta(\ads)$ are summarized in \propref{P:formulaRamified.N} and \propref{P:formula.N}. The explicit calculation of the period integral of our Eisenstein series is carried out in \secref{S:EvCM}. Finally we show some Fourier coefficient of our Eisenstein series is non-zero modulo $p$ in \secref{S:NVES}.
%The readers who are familiar with \cite{Hida:nonvanishingmodp} are encouraged to read \S 4 and \S 5 after taking a glace at \S 2 about the adelic description of CM points.

\begin{thank}The author would like to thank Prof. Hida for helpful email correspondence during preparation of this article.
Also the author would like to thank Prof. Sun, Hae-Sang for useful conversation during the stay in Korea Institute of Advanced Study in September 2009. Finally, the author is very grateful to the referee for many valuable suggestions on the improvements of our main results (especially on \lmref{L:2.N} and \corref{C:1.N}) in the previous version of this manuscript.
\end{thank}

\section{Notation and definitions}\label{S:Notation}
\subsection{}Throughout $\cF$ is a totally real field of degree $d$ over $\Q$ and $\cK$ is a totally imaginary quadratic extension of $\cF$. Let $c$
be the complex conjugation, the unique non-trivial
element in $\Gal(\cK/\cF)$. Let $\OF$ (resp. $\OK$) be the ring of integer of $\cF$ (resp. $\cK$). Let $\cD_\cF$ (resp. $D_\cF$) be the different (resp. discriminant) of $\cF/\Q$. Let $\cD_{\cK/\cF}$ be the different of $\cK/\cF$. %Write $\cD_\cF=\cD_+\cD_-$, where $\cD_+=\norm{\cD'}$ for some fractional ideal $\cD'$ of $\cK$ and $\cD_-$ is only divisible by inert primes in $\cK$.
For every fractional ideal $\frakb$ of $\OF$, set $\frakb^*=\frakb^{-1}\cD_\cF^{-1}$. Denote by $\bda=\Hom(\cF,\C)$ the set of archimedean places of $\cF$. Denote by $\bdh$ (resp. $\bdh_\cK$) the set of finite places of $\cF$ (resp. $\cK$). We often write $v$ for a place of $\cF$ and $w$ for the place of $\cK$ above $v$. Denote by $\cF_v$ the completion of $\cF$ at $v$ and by $\uf_v$ a unifomrmizer of $\cF_v$. Let $\cK_v=\cF_v\ot_\cF\cK$.

Fix two rational primes $p\not =\ell$. Let $\frakl$ be a prime of $\cF$ above $\ell$. Let $\Sg$ be a fixed CM type of $\cK$ as in the introduction. We shall identify
$\Sg$ with $\bda$ by the restriction to $\cF$. We assume \eqref{unr} and \eqref{ord} for $(p,\cK,\Sg)$ throughout this article.
Let \[\Sg_p=\stt{w\in \bdh_\cK\mid w|p\text{ and $w$ is induced by $\iota_p\circ\sg$ for $\sg\in\Sg$}}.\]
We recall that $\Sg$ is $p$-ordinary if $\Sg_p\cap\Sg_pc=\emptyset$ and $\Sg_p\cup\Sg_pc=\stt{w\in\bdh_\cK\mid w|p}$.
Note that \eqref{ord} implies that every prime of $\cF$ above $p$ splits in $\cK$.

\subsection{}
If $L$ is a number field, $\A_L$ is the adele of $L$ and $\A_{L,f}$
is the finite part of $\A_L$. The ring of integers of
$L$ is denoted by $\cO_L$. For $a\in\A_L$, we put
\[\il_L(a):=a(\cO_L\ot\Zhat)\cap L.\]
Let $\addchar_\Q$ be the standard additive character
of $\A_\Q/\Q$ such that $\addchar_\Q(x_\infty)=\exp(2\pii x_\infty),\,x_\infty\in\R$. We define $\addchar_L:\A_L/L\to\C^\x$ by
$\addchar_L(x)=\addchar_\Q\circ\Tr_{L/\Q}(x)$. For $\beta\in L$, $\addchar_{L,\beta}(x)=\addchar_L(\beta x)$. If $L=\cF$, we write $\addchar$ for $\addchar_\cF$.

We choose once and for all an embedding
$\iota_\infty:\Qbar\hookto\C$ and an isomorphism
$\iota:\C\iso\Qbarp$, where $\Qbarp$ is the completion of an
algebraic closure of $\Qp$. Let
$\iota_p=\iota\iota_\infty:\Qbar\hookto\Qbarp$ be their composition.
We regard $L$ as a subfield in $\C$ (resp. $\C_p$) via
$\iota_\infty$ (resp. $\iota_p$) and $\Hom(L,\Qbar)=\Hom(L,\C_p)$.

Let $\Zbar$ be the ring of algebraic integers of $\Qbar$ and let $\Zbarp$ be the \padic completion of $\Zbar$ in $\Qbarp$ with the maximal ideal $\frakm_p$. Let $\frakm=\iota_p^{-1}(\frakm_p)$.
\subsection{}
Let $F$ be a local field. Denote by $\Abs_F$ the absolute value of $F$. We often drop the subscript $F$ if it is clear from the context. We fix the choice of our Haar measure $dx$ on $F$. If $F=\R$, $dx$ is the Lebesgue measure on $\R$. If $F=\C$, $dx$ is the twice the Lebesgue measure.
If $F$ is a non-archimedean local field, $dx$ (resp. $\dx x$) is the Haar measure on $F$ (resp. $F^\x$) normalized so that $\vol(\cO_F,dx)=1$ (resp. $\vol(\cO_F^\x,\dx x)=1$).
If $\mu:F^\x\to\C^\x$ is a character of $F^\x$, define
\[a(\mu)=\inf\stt{n\in \Z_{\geq 0}\mid
\mu|_{1+\uf_v^n \OFv}=1}.\]

\section{Hilbert modular varieties and Hilbert modular forms}\label{S:Hilbert}
\subsection{}We follow the exposition in \cite[\S 4.2]{Hida:p-adic-automorphic-forms}. Let $V=\cF e_1\oplus\cF e_2$ be a two dimensional
$\cF$-vector space and $\pairing:V\x V\to \cF$ be the $\cF$-bilinear
alternating pairing defined by $\pair{e_1}{e_2}=1$. Let $\sL=\OF
e_1\oplus \OF^* e_2$ be the standard $\OF$-lattice in $V$. Let $G=\GL_2{}_{/\cF}$. For $g=\MX{a}{b}{c}{d}\in M_2(\cF)$, we define an involution
$g'=\MX{d}{-b}{-c}{a}$. If $g\in G(\cF)=\GL_2(\cF)$, then $g'=g^{-1}\det g$. We
identify vectors in $V$ with row vectors according to the basis $e_1,e_2$, so $G$ has a natural right
action on $V$. Define a left action of $G$ on $V$ by $g*x:=x\cdot g',\,x\in V$.

For each finite place $v$ of $\cF$, we put
\[\opcpt^0_v=\stt{g\in G(\cF_v)\mid g*(\sL\ot_{\OF}\OFv)=\sL\ot_{\OF}\OFv}.\]
Let $\opcpt^0=\prod_{v\in\bdh}\opcpt^0_v$ and $\opcpt^0_p=\prod_{v|p}\opcpt^0_v$.
For a prime-to-$p\ell$ positive integer $\frakN$, we define an open-compact subgroup $U(N)$ of $G(\AFf)$ by
\beq\label{E:opcpt.N}U(\frakN):=\stt{g\in G(\AFf)\mid g\con 1\pmod{\frakN\sL}}.\eeq
 Let $\opcpt$ be an open-compact subgroup of $G(\AFf)$ such that $K_p=\opcpt^0_p$. We assume that $\opcpt\supset U(\frakN)$ for some $\frakN$ as above and that
 $\opcpt$ is sufficiently small so that the following condition holds:
\beqcd{neat}\opcpt\text{ is neat and }\det (\opcpt)\cap
\OF_+^\x\subset (K\cap \OF^\x)^2.\eeqcd
\subsection{Kottwitz models}
We first review Kottwitz models of Hilbert modular varieties.
\begin{defn}[$S$-quadruples]\label{D:6.H} Let $\Box$ be a finite set of
rational primes and let $\baseR_\bbox=\Z_\bbox[\zeta_{N}]$, $\zeta=\exp(\frac{2\pii}{N})$. Define the fibered category $\cA^\bbox_{\opcpt}$ over
$SCH_{/\baseR_\bbox}$ as follows. Let $S$ be a locally noethoerian connected $\baseR_\bbox$-scheme and let $\ol{s}$ be a geometric point of $S$. Objects are abelian varieties with real
multiplication (AVRM) over $S$ of level $\opcpt$, \ie a
$S$-\emph{quadruple} $\ulA=(A,\ollam,\iota,\ol{\eta}^\bbox)_S$ consisting of the
following data:
\begin{enumerate}
\item $A$ is an abelian scheme of dimension $d$ over $S$.
\item $\iota :\OF\hookrightarrow \End_S A\ot_\Z\ZZbox$.
\item $\lam$ is a prime-to-$\Box$ polarization of $A$ over $S$ and
$\ollam$ is the $\OF_{\bbox,+}$-orbit of $\lam$. Namely
\[\ollam=\OF_{\bbox,+}\lam:=\stt{\lam'\in\Hom(A,A^t)\ot_\Z\ZZbox\mid \lam'=\lam\circ a,\,a\in O_{\bbox,+}}.\]
\item $\ol{\eta}^\bbox=\eta^\bbox\opcpt^\bbox$ is a $\pi_1(S,\ol{s})$-invariant $\opcpt^\setp$-orbit of isomorphisms of $\cO_\cK$-modules $\eta^\bbox: \sL\ot_\Z\A_f^\bbox\isoto V^\bbox(A_{\ol{s}}):= H_1(A_{\ol{s}},\A_f^\bbox)$. Here we define $\eta^\bbox g$ for $g\in G(\AFf^\bbox)$ by $\eta^\bbox g(x)=\eta^\bbox(g*x)$.
\end{enumerate}
Furthermore, $(A,\ollam,\iota,\ol{\eta}^\bbox )_S$ satisfies the
following conditions:
\begin{itemize}
\item Let ${}^t$ denote the Rosati involution induced by $\lam$ on
$\End_SA\ot\ZZbox$. Then $\iota(b)^t=\iota(b),\, \forall\, b\in
\OF.$
\item Let $e^\lam$ be the Weil pairing induced by $\lam$. Lifting the isomorphism $\Z/N\Z\iso \Z/N\Z(1)$ induced by $\zeta_N$ to an isomorphism $\zeta:\Zhat\iso\Zhat(1)$, we can regard $e^\lam$ as an $\cF$-alternating form
$e^\lam:V^\bbox(A_{\ol{s}})\times V^\bbox(A_{\ol{s}})\to
\cD^{-1}_\cF\ot_\Z\A_f^\bbox$. Let $e^\eta$ denote the
$\cF$-alternating form on $V^\bbox(A_{\ol{s}})$ induced by
$e^\eta(x,x')=\pair{x\eta}{x'\eta}$. Then
\[e^\lam=u\cdot e^\eta\text{ for some }u\in\AFf^\bbox.\]
\item As
$\OF\ot_\Z\cO_S$-modules, we have an isomorphism $\Lie A\iso \OF\ot_\Z\cO_S$ locally under Zariski topology of $S$.
\end{itemize}
For two $S$-quadruples $\ulA=(A,\ollam,\iota,\ol{\eta}^\bbox )_S$ and $\ul{A'}=(A',\ol{\lam'},\iota',(\ol{\eta'})^\bbox)_S$, we define the morphisms by
\[\Hom_{\cA^\bbox_{\opcpt}}(\ulA,\ul{A'})=\stt{\phi\in \Hom_{\OF}(A,A')\mid
\phi^*\ol{\lam'}=\ollam,\,\phi\circ(\ol{\eta'})^\bbox=\ol{\eta}^\bbox }.\] We
say $\ulA\sim\ul{A'}$ (resp. $\ulA\iso\ul{A'}$) if there exists a
prime-to-$\Box$ isogeny (resp. isomorphism) in
$\Hom_{\cA_\opcpt^\bbox}(\ulA,\ul{A'})$.
\end{defn}
We consider the cases when $\Box=\emptyset$ and $\stt{p}$. When
$\Box=\emptyset$ is the empty set and $\baseR_\bbox=\Q(\zeta_N)$, we define the functor
$\cE_{\opcpt}:\SCH_{/\Q(\zeta_N)}\to\ENS$ by
\[\cE_\opcpt(S)=\stt{\ulA=(A,\ollam,\iota,\ol{\eta})_S\in\cA_\opcpt(S)}/\sim.\] By the theory of Shimura-Deligne, $\cE_{\opcpt}$ is represented by a quasi-projective scheme $\sh_\opcpt$ over $\Q(\zeta_N)$. We define
the functor $\frakE_\opcpt:\SCH_{/\cQ}\to\ENS$ by
\[\frakE_\opcpt(S)=\stt{(A,\ollam,\iota,\ol{\eta})\in\cA^\bbox_\opcpt(S)\mid \eta^\bbox(\sL\ot_\Z\Zhat)=H_1(A_{\ol{s}},\Zhat)}/\iso.\]
By the discussion in \cite[p.136]{Hida:p-adic-automorphic-forms}, we have $\frakE_\opcpt\isoto\cE_\opcpt$ under the hypothesis \eqref{neat}.

When $\Box=\stt{p}$, we write $\baseR$ for $\baseR_\setp$ and define functor
$\cE^\setp_{\opcpt}:\SCH_{/\baseR}\to\ENS$ by
\[\cE^\setp_{\opcpt}(S)=\stt{\ulA=(A,\ollam,\iota,\ol{\eta}^\setp)_S\in\cA_{\opcpt^\setp}^\setp(S)}/\sim.\]
In \cite{Kottwitz:Points-On-Shimura-Varieties}, Kottwitz shows
$\cE^\setp_{\opcpt}$ is representable by a quasi-projective
scheme $\sh^\setp_{\opcpt}$ over $\baseR$ if $\opcpt$ is neat. Similarly we
define the functor $\frakE_K^\setp:\SCH_{/\baseR}\to\ENS$ by
\[\frakE_K^\setp(S)=\stt{(A,\ollam,\iota,\ol{\eta}^\setp)\in\cA^\setp_\opcpt(S)\mid \eta^\setp(\sL\ot_\Z\Zhat^\setp)=H_1(A_{\ol{s}},\Zhat^\setp)}/\iso.\]
It is shown in \cite[\S 4.2.1]{Hida:p-adic-automorphic-forms} that
$\frakE^\setp_K\isoto\cE^\setp_K$.

\subsection{Igusa schemes}\label{17.H}
\begin{defn}[$S$-quintuples] Let $n$ be a positive integer. We define the fibered category $\cA_{K,n}^\setp$ whose
objects are AVRM over an $\baseR$-scheme of level $\opn$, \ie a $S$-quintuple $(\ul{A},j)_S$ consisting of
a $S$-quadruple $\ul{A}=(A,\ollam,\iota,\ol{\eta}^\setp)\in\cA^\setp_{\opcpt^\setp}(S)$ and a monomorphism
\[\lp: \OF^*\ot\bbmu_{p^n}\hookto A[p^n]\] as $\OF$-group
schemes over $S$. We call $\lp$ a level-$p^n$ structure of $A$.
Morphisms are
\[\Hom_{\cA^\setp_{K,n}}((\ulA,j),(\ul{A'},j'))=\stt{\phi\in\Hom_{\cA^\setp_{\opcpt^\setp}}(\ulA,\ul{A'})\mid \phi j=j'}.\]
\end{defn}

Define the functor $\frakI^\setp_{K,n}:\SCH_{/\baseR}\to\ENS$ by
\[\frakI^\setp_{K,n}(S)=\stt{(\ulA,\lp)=(A,\ollam,\iota,\ol{\eta}^\setp,\lp)_S\in
\cA^\setp_{K,n}(S)\mid \lpp(\sL\ot_\Z\Zhat^\setp)=T^\setp(A)}/\iso.
\]
It is known that $\frakI^\setp_{K,n}$ are relatively representable over
$\frakE^\setp_K$ (\cf \cite[Prop. 3.12]{SGA3-2}), so it is
represented by a scheme over $\baseR$, which we denote by $\Ig_{\opcpt,n}$.

For $n\geq n'>0$, the natural morphism
$\pi_{n,n'}:\Ig_{\opcpt,n}\to\Ig_{\opcpt,n'}$ induced by the
inclusion $\OF^*\ot\bbmu_{p^{n'}}\hookto \OF^*\ot\bbmu_{p^n}$ is finite
\etale. The forgetful
morphism $\pi:\Ig_{\opcpt,n}\to \sh^\setp_{\opcpt}$ defined by
$\pi:(\ulA,\lp)\mapsto \ulA$ are \etale for all $n>0$. Hence
$\Ig_{\opcpt,n}$ is smooth over $\Spec\baseR$. The image of $\pi$ is
the pre-image of ordinary abelian schemes in
$\Ig_{\opcpt,n}\ot\Fpbar$.

\subsection{Complex uniformization}\label{S:cpx} We describe the complex points $\sh_\opcpt(\C)$. Put
\[X^+=\stt{\tau=(\tau_\sg)_{\sg\in\bda}\in\C^{\bda}\mid \Im \tau_\sg >0\text{ for all } \sg\in\bda}.\]
Let $\cF_+$ be the set of totally positive elements in $\cF$ and let $G(\cF)^+=\stt{g\in G(\cF)\mid \det g\in\cF_+}$. Define the complex Hilbert modular variety by
\[M(X^+,\opcpt):=G(\cF)^+\bksl X^+\x G(\AFf)/\opcpt.\]
It is well known that $M(X^+,\opcpt)\isoto\sh_\opcpt(\C)$ by the theory
of abelian varieties over $\C$.

For $\tau=(\tau_\sg)_{\sg\in\bda}\in X^+$, we let $p_\tau$ be the
period map $V\ot_\Q\R\isoto \C^{\bda}$ defined by
$p_\tau(ae_1+be_2)=a\tau+b$, $a,b\in \cF\ot_\Q\R=\R^{\bda}$. We can associate a AVRM to $(\tau,g)\in X^+\x G(\AFf)$ as follows.
\begin{itemize}
\item The complex abelian variety $\EucA_g(\tau)=\C^{\bda}/p_\tau(g*\sL)$.
\item  The $\cF_+$-orbit of polarization
$\ol{\pairing}_\can$ on $\EucA_g(\tau)$ is given by the Riemann form $\pairing\circ p_\tau^{-1}$.
\item The $\iota_\C:O\hookto\End \EucA_g(\tau)\ot_\Z\Q$ is induced from the pull back of the natural $\cF$-action on $V$ via $p_\tau$.
\item The level structure $\eta_g:
\sL\ot_\Z\A_f \isoto (g*\sL)\ot_\Z\A_f=H_1(\EucA_g(\tau),\A_f)$ is defined by  $\eta_g(v)= g*v$.\end{itemize}
Let $\ul{\EucA_{g}(\tau)}$ denote the $\C$-quadruple $(\EucA_g(\tau),\ol{\pairing}_\can,\iota,\opcpt\eta_g)$. Then $[(\tau,g)]\mapsto [\ul{\EucA_{g}(\tau)}]$ gives rise to an isomorphism $M(X^+,\opcpt)\isoto \sh_\opcpt(\C)$.

Let $\ulz=\stt{z_\sg}_{\sg\in\bda}$ be the standard complex coordinates of $\C^{\bda}$ and $d\ulz=\stt{dz_\sg}_{\sg\in\bda}$. Then $O$-action on $d\ulz$ is given by
$\iota_\C(\al)^* dz_\sg=\sg(\al)dz_\sg,\,\sg\in\bda=\Hom(\cF,\C)$. Let $z=z_{id}$ be the coordinate corresponding to $\iota_\infty:\cF\hookto\Qbar\hookto\C$.
Then
\beq\label{E:7.N}(\OF\ot_\Z\C) dz=H^0(\EucA_g(\tau),\Omega_{\EucA_g(\tau)/\C}).\eeq
\subsection{Hilbert modular forms}
\subsubsection{}For $\tau\in \C$ and $g=\MX{a}{b}{c}{d}\in\GL_2(\R)$,
we put \beq\label{E:4.N}J(g,\tau)=c\tau+d.\eeq For
$\tau=(\tau_\sg)_{\sg\in\bda}\in X^+$ and $g_\infty=(g_\sg)_{\sg\in\bda}\in
G(\cF\ot_\Q\R)$, we put
\[\ul{J}(g_\infty,\tau)=\prod_{\sg\in\bda}J(g_\sg,\tau_\sg).\]
\begin{defn}%A $\C$-valued function $\f:X^+\x G(\AFf)\to \C$ is said to be a holomorphic Hilbert modular form of parallel weight $k$ and level $\opcpt$ if $\f(-,g_f):X^+\to\C$ is a holomorphic function for each $g_f\in G(\AFf)$ and
%\[\f(\al(\tau,g_f)u)=\ul{J}(\al,\tau)^{k\Sg}\f(\tau,g_f)\text{ for all }\al\in G(\Q)^+\text{ and }u\in \opcpt.\]
Denote by $\bfM_k(\opcpt,\C)$ the space of holomorphic Hilbert modular form of parallel weight $k$ and level $\opcpt$.
Each $\bff\in\bfM_k(\lsgN,\C)$ is a $\C$-valued function $\bff:X^+\x G(\AFf)\to \C$ such that the function $\bff(-,g_f):X^+\to\C$ is holomorphic for each $g_f\in G(\AFf)$ and
\[\bff(\al(\tau,g_f)u)=\ul{J}(\al,\tau)^{k\Sg}\bff(\tau,g_f)\text{ for all }u\in \lsgN\text{ and }\al\in G(\cF)^+.\]
\end{defn}
\subsubsection{Fourier expansion}
 For every $\bff\in \bfM_k(\lsgN,\C)$, we have the Fourier expansion
\[\bff(\tau,g_f)=\sum_{\beta\in\cF_+\cup\stt{0}}W_\beta(\bff,g_f)e^{2\pii\Tr_{\cF/\Q}(\beta \tau)}.\]We call $W_\beta(\bff,g_f)$ the $\beta$-th Fourier coefficient of $\bff$ at $g_f$.

For a semi-group $L$ in $\cF$, let $L_+=\cF_+\cap L$ and $L_{\geq 0}=L_+\cup \stt{0}$. If $B$ is a ring, we denote by $B\powerseries{L}$ the set of all formal series
\[\sum_{\beta\in L}a_\beta q^\beta,\,a_\beta\in B.\]
Let $a,b\in(\AFf^{(p\frakN)})^\x$ and let $\fraka=\il_\cF(a)$ and
$\frakb=\il_\cF(b)$. The $q$-expansion of $\bff$ at the cusp $(\fraka,\frakb)$ is given by
\beq\label{E:FC0}\bff|_{(\fraka,\frakb)}(q)=\sum_{\beta\in (N^{-1}\fraka\frakb)_{\geq 0}}W_\beta(\bff,\MX{a^{-1}}{0}{0}{b})q^\beta\in \C\powerseries{(N^{-1}\fraka\frakb)_{\geq 0}}.\eeq
 If $B$ is a $\baseR$-algebra in $\C$, we put
\begin{align*}
\bfM_k(\opcpt,B)&=\stt{\bff\in \bfM_k(\opcpt,\C)\mid  \bff|_{(\fraka,\frakb)}(q)\in B\powerseries{(N^{-1}\fraka\frakb)_{\geq 0}}\text{ at all cusps }(\fraka,\frakb)}.\\
\end{align*}
\subsubsection{Tate objects}
Let $\sS$ be a set of $d$-linear $\Q$-independent elements in $\Hom(\cF,\Q)$ such that $l(\cF_+)>0$ for $l\in\sS$. If $L$ is a lattice in $\cF$ and $n$ a positive integer, let
$L_{\sS,n}=\stt{x\in L\mid l(x)>-n\text{ for all }l\in\sS}$ and put $B((L;\sS))=\lim\limits_{n\to\infty} B\powerseries{L_{\sS,n}}$.
To a pair $(\fraka,\frakb)$ of two prime-to-$pN$ fractional ideals , we can attach the Tate AVRM $Tate_{\fraka,\frakb}(q)=\Gm\ot_\Z\fraka^*/q^{\frakb}$ over $\Z((\fraka\frakb;\sS))$ with $O$-action $\iota_\can$. As described in \cite{Katz:p_adic_L-function_CM_fields}, $Tate_{\fraka,\frakb}(q)$ has a canonical $\fraka\frakb^{-1}$-polarization $\lam_\can$ and also carries $\Om_\can$ a canonical $\OF\ot\Z((\fraka\frakb;\sS))$-generator of $\Omega_{Tate_{\fraka,\frakb}}$ induced by the isomorphism $\Lie(Tate_{\fraka,\frakb}(q)_{/\Z((\fraka\frakb;\sS))})=\fraka^*\ot_\Z\Lie(\Gm)\iso\fraka^*\ot\Z((\fraka\frakb;\sS))$.
Let $\sL_{\fraka,\frakb}=\sL\cdot\MX{\frakb}{}{}{\fraka^{-1}}=\frakb e_1\oplus\fraka^*e_2$. Then we have a level $N$-structure $\eta_\can:N^{-1}\sL_\ab/\sL_\ab\isoto Tate_\ab(q)[\frakN]$ over $\Z[\zeta_N]((N^{-1}\fraka\frakb;\sS))$ induced by the fixed primitive $N$-th root of unity $\zeta_N$.
We write $\ul{Tate}_{\fraka,\frakb}$ for the Tate $\Z((\fraka\frakb;\sS))$-quadruple $(Tate_{\fraka,\frakb}(q),\ol{\lam_\can},\iota_\can,\ol{\eta}^\setp_\can)$ at $(\fraka,\frakb)$. In addition, since $\fraka$ is prime to $p$, we let $\eta^0_{p,\can}\colon \OF^*\ot_\Z\bbmu_{p^n}=\fraka^*\ot_\Z\bbmu_{p^n}\hookto Tate_{\fraka,\frakb}(q)$ be the canonical level $p^n$-structure induced by the natural inclusion $\fraka^*\ot_\Z\bbmu_{p^n}\hookto\fraka^*\ot_\Z\Gm$.

\subsubsection{Geometric modular forms}\label{S:GME}We collect here definitions and basic facts of geometric modular forms. For the precise theory, we refer to \cite{Katz:p_adic_L-function_CM_fields} or \cite{Hida:p-adic-automorphic-forms}. Let $T=\Res_{\OF/\Z}\Gm$ and $\kappa\in\Hom(T,\Gm)$. Let $B$ be a $\Z_\setp$-algebra. Consider $[\ulA]=[(A,\ol{\lam},\iota,\ol{\eta}^\setp)]\in\frakE_{\opcpt}(C)$ for a $B$-algebra $C$ with a differential form $\Om$ generating $H^0(A,\Omega_{A/C})$ over $\OF\ot_\Z C$. A geometric modular form $f$ over $B$ of weight $\kappa$ and level $\opcpt$ is a functorial rule of assigning a value $f(\ulA,\Om)\in C$ satisfying the following axioms.
\begin{mylist}
\item[(G1)] $f(\ulA,\Om)=f(\ulA',\Om')\in C$ if $(\ulA,\Om)\iso (\ulA',\Om')$ over $C$,
\item[(G2)]For a $B$-algebra homomorphism $\varphi:C\to C'$, we have
\[f((\ulA,\Om)\ot_C C')=\varphi(f(\ulA,\Om)),\]
\item[(G3)]$f(\ulA,a\Om)=\kappa(a^{-1})f(\ulA,\Om)$ for all $a\in T(C)=(\OF\ot_\Z C)^\x$,
\item[(G4)]$f(\ul{Tate}_\ab,\Om_\can)\in B[\zeta_N]\powerseries{(N^{-1}\fraka\frakb)_{\geq 0}}\text{ at all cusps }(\fraka,\frakb)$.
\end{mylist}
For a positive integer $k$, we regard $k\in\Hom(T,\Gm)$ as the character $t\mapsto \bfN_{\cF/\Q}(t)^k$.
We denote by $\cM_k(\opcpt,B)$ the space of geometric modular forms over $B$ of weight $k$ and level $\opcpt$.

For each $f\in \cM_k(\opcpt,\C)$, we regard $f$ as a holomorphic Hilbert modular form of weight $k$ and level $\opcpt$ by
\[f(\tau,g_f)=f(\EucA_g(\tau),\ol{\pairing}_\can,\iota_\C,\ol{\eta}_g,2\pii dz),\]
where $dz$ is the differential form in \eqref{E:7.N}. By GAGA principle, this gives rise to an isomorphism $\cM_{k}(\opcpt,\C)\isoto\bfM_k(\opcpt,\C)$. As discussed in \cite[\S 1.7]{Katz:p_adic_L-function_CM_fields}, the evaluation $\bff(\ul{Tate}_{\ab},\Om_\can)$ is independent of the auxiliary choice of $\sS$ in the construction of the Tate object. Moreover, we have the following important identity which bridges holomorphic modular forms and geometric modular forms. \[\bff|_{(\fraka,\frakb)}(q)=\bff(\ul{Tate}_{\ab},\Om_\can)\in\C\powerseries{(N^{-1}\fraka\frakb)_{\geq 0}}.\]
By $q$-expansion principle, if $B$ is $\baseR$-algebra in $\C$, then $\cM_{k}(\opcpt,B)=\bfM_k(\opcpt,B)$.
\subsubsection{\padic modular forms}Let $B$ be a \padic ring in $\Cp$. Let $V(\opcpt,B)$ be the space of Katz \padic modular forms over $B$ defined by
\[V(\opcpt,B):=\prolim_m\dirlim_n H^0(\Ig_{\opcpt,n}{}_{/B/p^mB},\cO_{\Ig_{\opcpt,n}}).\]
In other words, Katz \padic modular forms are formal functions on Igusa towers.

For each point $(\ulA,\lp)\in\dirlim_m\prolim_n\Ig_{\opcpt,n}{}_{/B/p^m B}$, the level $p^\infty$-structure $\lp$ induces an isomorphism $\wtd\lp:\Lie(\wh{\bbG}_m)\ot_{\Zp} O^*\isoto \Lie(A)$. Let $dt/t$ be the canonical invariant differential form of $\widehat{\bbG}_m$. Then $\lp_*dt/t:=dt/t\circ \wtd\lp^{-1}$ is a generator of $H^0(A,\Omega_A)$ as a $\OF$-module.
We thus have a natural injection\beq\label{E:padicavatar.N}
\begin{aligned}\cM_k(\opcpt,B)&\hookto V(\opcpt,B)\\
f&\mapsto \wh{f}(\ulA,\lp):=f(\ulA,\lp_*dt/t)
\end{aligned}\eeq
which preserves the $q$-expansions in the sense that $\wh f|_{(\ab)}(q):=\wh f(\ul{Tate}_\ab,\eta^0_{p,\can})=f|_{(\ab)}(q)$. We will call $\wh{f}$ the \padic avatar of $f$.

\subsection{Hecke action}Let $h\in G(\AFf^\setp)$ and let ${}_hK:=hK h^{-1}$. We define a morphism
$\smid h:\cE^\setp_{{}_hK}\isoto\cE^\setp_{K}$ by
\[\ulA=(A,\ollam,\iota,\ol{\eta}^\setp )\mapsto \ulA\smid h=(A,\ollam,\iota,h\ol{\eta}^\setp ).\]
Then $\smid h$ induces an $\baseR$-isomorphism $\sh^\setp_{\opcpt}\isoto
\sh^\setp_{\opcpt_h}$, and $\smid h$ thus acts on spaces of modular forms. In particular,  for $F\in V(\opcpt,\baseR)$, we define $F|h\in V({}_h\opcpt,\baseR)$ by
\[F|h(\ulA)=F(\ulA\smid h).\]
Let $\opcpt_0(\frakl):=\stt{g\in \opcpt\mid e_2g\in \OF^*e_2\pmod{\frakl\sL}}$. Define the $U_\frakl$-operator on $V(\opcpt_0(\frakl),\baseR)$ by
\[F|U_\frakl=\sum_{u\in\OF^*/\frakl\OF^*}F|\MX{\ufl}{u}{0}{1}.\]

Using the description of complex points
of $\sh^\setp_{K}(\C)$ in \secref{S:cpx}, it is not difficult to verify by definition that for $(\tau,g)\in X^+\x
G(\AFf)$ two pairs $(\ul{\EucA_{g}(\tau)}\smid h,\Om)$ and $(\ul{\EucA_{gh}(\tau)},\Om)$ of $\C$-quadruples and invariant differential forms are $\Z_\setp$-isogenous,
so we have the isomorphism:
\beq\label{E:HeckeAct}\begin{aligned}
\bfM_k(\opcpt,\C)&\isoto \bfM_k({}_h\opcpt,\C)\\
\bff&\mapsto\bff|h(\tau,g)=\bff(\tau,gh).\end{aligned}\eeq 

\section{CM points}\label{S:CMpoint} \subsection{}\label{S:CM1} In this section, we give an adelic description of CM points in Hilbert modular varieties.
Fix a prime-to-$p$ integral ideal $\frakC$ of $\OK$ such that $(p\frakl,\frakC\cD_{\cK/\cF})=1$. Write $\frakC=\frakC^+\frakC^-$, where $\frakC^-=\Cinert\Cram$,
$\Cinert$ (resp. $\Cram$) is a product of inert (resp. ramified) primes in $\cK/\cF$
and $\frakC^+=\Csplit\Csplit_c$ is a product of split primes in
$\cK/\cF$ such that $(\Csplit,\Csplit_c)=1$ and $\Csplit\subset\Csplit_c^c$.
Recall that we have assumed \eqref{unr} and \eqref{ord} in the introduction. Let $\Sg$ be a $p$-ordinary CM type of $\cK$ and identify
$\Sg$ with $\bda$ by the restriction to $\cF$. We choose $\skewhf\in\cK$ such that
\begin{itemize}
\item[(d1)] $\skewhf^c=-\skewhf$ and
$\Im\sg(\skewhf)>0$ for all $\sg\in\Sg$,
\item[(d2)] $\frakc(\OK):=\cD_{\cF}^{-1}(2\skewhf\cD_{\cK/\cF}^{-1})$ is prime to $pD_{\cK/\cF}\frakl\frakC\frakC^c$.
\end{itemize}
Let
$\skewhf^\Sg:=(\sg(\skewhf))_{\sg\in\Sg}\in X^+$. Let $D=-\skewhf^2\in \cF_+$ and define
$\rho:\cK\hookto M_2(\cF)$ by
\[\rho(a\skewhf+b)=\MX{b}{-D a}{a}{b}.\]Consider the isomorphism $q_\skewhf:\cK\isoto \cF^2=V$ defined by $q_\skewhf(a\skewhf+b)=ae_1+be_2$.
It is clear that $(0,1)\rho(\al)=q_\skewhf(\al)$
and $q_\skewhf(x \al)=q_\skewhf(x)\rho(\al)$ for $\al,x\in \cK$. Let $\C(\Sg)$ be the $\cK$-module whose underlying space is
$\C^\Sg$ with the $\cK$-action given $\al(x_\sg)=(\sg(\al)x_\sg)$.
Then we have a canonical isomorphism $\cK\ot_\Q\R=\C(\Sg)$, and $p_\skewhf:=q_\skewhf^{-1}:V\ot_\Q\R\isoto \cK\ot_\Q\R=\C(\Sg)$ is the period map associated to $\skewhf^\Sg$.
\subsection{A good level structure}\label{S:CM2}
\subsubsection{}\label{S:different}For each $v|p\Csplit\Csplit^c$, we decompose $v=w\wbar$ into two places $w$ and $\wbar$ of $\cK$ with $w|\Csplit\Sg_p$. Here $w|\Csplit\Sg_p$ means $w|\Csplit$ or $w\in\Sg_p$. Let $e_{w}$ (resp.
$e_{\wbar}$) be the idempotent associated to $w$ (resp. $\wbar$). Then $\stt{e_w,e_{\wbar}}$ gives an $\OFv$-basis of $\OKv$. Let $\skewhf_w\in \cF_v$ such that $\skewhf=-\skewhf_we_{\wbar}+\skewhf_w e_w$.

For inert or ramified place $v$ and $w$ the place of $\cK$ above $v$, we fix a $\OFv$-basis $\stt{1,\bftheta_v}$ such that $\bftheta_v$ is a uniformizer if $v$ is ramified and $\ol{\bftheta}=-\bftheta$ if $v\ndivides 2$. Let $\delta_v:=\bftheta_v-\ol{\bftheta_v}$ be a fixed generator of the relative different $\cD_{\cK_w/\cF_v}$.

Fix a finite idele $d_\cF=(d_{\cF_v})\in \adelef$ such that $\il_\cF(d_\cF)=\cD_\cF$. By (d2), we may choose $d_{\cF_v}=2\skewhf\delta_v^{-1}$ if $v|D_{\cK/\cF}\Cinert$ (resp. $d_{\cF_v}=-2\skewhf_w$ if $w|\Csplit\Sg_p$).
\subsubsection{}
We shall choose a basis $\stt{e_{1,v},e_{2,v}}$ of $R\ot_\OF \OF_v$ for each finite
place $v\not =\frakl$ of $\cF$.
If $v\ndivides p\frakl\frakC\frakC^c$, we choose
$\stt{e_{1,v},e_{2,v}}$ in $\OK\ot\OF_v$ such that
$R\ot_{\OF}\OF_v=\OF_v e_{1,v}\oplus \OF_v^* e_{2,v}$. It is clear
that $\stt{e_{1,v},e_{2,v}}$ can be taken to be $\stt{\skewhf,1}$
except for finitely many $v$. If $v|p\Csplit\Csplit^c$, let $\stt{e_{1,v},e_{2,v}}=\stt{e_{\wbar},d_{\cF_v}\cdot e_{w}}$ with $w|\Csplit\Sg_p$. If $v$ is inert or ramified, let
$\stt{e_{1,v},e_{2,v}}=\stt{\bftheta_v,d_{\cF_v}\cdot 1}$.
For every integer $n\geq 0$, we let $R_n=R+\frakl^n R$, and let $\stt{e^{(n)}_{1,\frakl},e^{(n)}_{2,\frakl}}:=\stt{-1,-d_{\cF_\frakl}\uf_\frakl^n\bftheta_\frakl}$ be a basis of
$R_n\ot_\OF\OF_\frakl$.

For $v\in\bdh$, let $\cmptv$ (resp. $\cmpt^{(n)}_\frakl$) be
the element in $\GL_2(\cF_v)$ such that
$e_i\cmptv'=q_\skewhf(e_{i,v})$ (resp.
$e_i(\cmpt^{(n)}_{\frakl})'=q_\skewhf(e^{(n)}_{i,\frakl})$). For
$v=\sg\in \bda$, let $\cmptv=\MX{\Im\sg(\skewhf)}{0}{0}{1}$.
Define $\cmpt=\prod_{v\not =\frakl}\cmptv\in\GL_2(\AF^{(\frakl)})$ and $\cmpt^{(n)}=\cmpt\x
\cmpt^{(n)}_{\frakl}\in\GL_2(\AF)$. Let $\cmpt_f$ and $\cmpt^{(n)}_f)$ be the finite components of $\cmpt$ and $\cmpt^{(n)}$ respectively. By the definition of
$\cmpt^{(n)}$, we have
\[\cmpt^{(n)}_f*(\sL\ot_\Z\Zhat)=(\sL\ot_\Z\Zhat)\cdot(\cmpt^{(n)}_f)'=q_\skewhf(\OK_n\ot_\Z\Zhat).\]
The matrix representation of $\cmptv$ according to the basis $\stt{e_1,e_2}$ for $v|p\frakl D_{\cK/\cF}\frakC\frakC^c$ is given as follows:
\beq\begin{aligned}\label{E:cm.N}
\cmptv&=\MX{d_{\cF_v}}{-2^{-1}t_v}{0}{d_{\cF_v}^{-1}},\,t_v=\bftheta_v+\ol{\bftheta_v}\text{ if $v|D_{\cK/\cF}\Cinert$},\\
\cmptv&= \MX{\frac{d_{\cF_v}}{2}}{-\onehalf}{\frac{d_{\cF_v}}{-2\skewhf_w}}{\frac{-1}{2\skewhf_w}}=\MX{-\skewhf_w}{-\onehalf}{1}{\frac{-1}{2\skewhf_w}}
\text{ if }v|p\Csplit\Csplit^c\text{ and }w|\Csplit\Sg_p,\\
\cmpt^{(n)}_{\frakl}&=\MX{-d_{\cF_\frakl}\uf_\frakl^nb_\frakl }{1}{ d_{\cF_\frakl}\uf_\frakl^na_\frakl }{0}\quad(\bftheta_\frakl=a_\frakl\skewhf+b_\frakl,\,a_\frakl\in\cF^\x_\frakl,b_\frakl\in\cF_\frakl).
%A&=B
\end{aligned}\eeq
%We define \beq\begin{aligned}\CM:\A^\x_{\cK}&\longto
%G(\A_{\cF})\\ a&\mapsto \CM(a):=\rho(a)\cmpt^{(n)}.\end{aligned}\eeq

\subsection{}
\def\CMring{W}
For every $a\in \AKf^\x$, we let
\[\ul{A}_n(a)_{/\C}:=\ul{\EucA_{\rho(a)\cmpt^{(n)}}(\skewhf^\Sg)}=(\EucA_{\rho(g)\cmpt^{(n)}}(\skewhf^\Sg),\ol{\pairing}_\can,\iota_\can,\ol{\eta(a)})\in\sh_\opcpt(\C)\] be the $\C$-quadruple associated to $(\skewhf^\Sg,\rho(a)\cmpt_f^{(n)})$ as in \secref{S:cpx}.
Then $\ul{A}_n(a)_{/\C}$ is an
abelian variety with CM by $\cK$. Let $\CMring$ be the \padic completion of the maximal unramified extension of $\Zp$ in $\Cp$. By the general theory of CM abelian varieties, the $\C$-quadruple $\ul{A}_n(a)_{/\C}$ descends to a $\CMring$-quadruple $\ulA_n(a)$. Moreover, since $\cK$ is $p$-ordinary, $\ulA_n(a)\ot_\CMring\Fpbar$ is an ordinary abelian variety, hence the level $p^\infty$-structure $\eta(a)_p$ over $\C$ descends to a level $p^\infty$-structure over $\CMring$.
Thus we obtain a map $x_n:\AKf^\x\to\prolim_m\Ig_{\opcpt,m}(\CMring)\subset \Ig_{\opcpt,\infty}(\CMring)$, which factors through $C_\cK:=\AKf^\x/\cK^\x$ the idele class group of $\cK$.
The collection of points $\Cl^\infty:=\disjoint_{n=1}^\infty x_n(C_\cK)$ in $\Ig_{\opcpt,\infty}(\CMring)$ is called \emph{CM points} in Hilbert modular varieties.
%$\Cl_\infty$ acts on $\Cl^\infty$ by %$y\cdot x_n(t)=x_n(t y_n)$, $y=\prolim_n [y_n]_n\in\Cl_\infty$.

\subsection{Polarization ideal} The alternating pairing $\pairing:\cK\x\cK:\to\cF$
defined by $\pair{x}{y}=(c(x)y-xc(y))/2\skewhf$ induces an isomorphism
$\OK\wedge_{\OF} \OK=\frakc(\OK)^{-1}\cD_\cF^{-1}$ for the fractional
ideal $\frakc(\OK)=\cD_\cF^{-1}(2\skewhf \cD_{\cK/\cF}^{-1})$. Then $\frakc(\OK)$ is the polarization of CM points $x_0(1)$. From the equation
\[\cD_\cF^{-1}\det(\cmpt_f)=\wedge^2\sL \cmpt_f'=\wedge^2\OK=\frakc(\OK)^{-1}\cD_\cF^{-1},\]
we find that $\frakc(\OK)=(\det(\cmpt_f))^{-1}$. Moreover, for $a\in\AK^\x$, the polarization ideal of $x_0(a)$ is $\frakc(\fraka):=\frakc(\OK)\rmN_{\cK/\cF}(\fraka)^{-1}$, $\fraka=\il_\cK(a)$.

\subsection{Measures associated to $U_\frakl$-eigenforms}\label{S:Measure}
\subsubsection{}We briefly recall Hida's construction of the measure associated to an $U_\frakl$-eigenform in \cite[\S 3]{Hida:nonvanishingmodp}. Define the compact subgroup $U_n= (\C_1)^\Sg\x(R_n\ot\Zhat)^\x$ in $\AK^\x=(\C^\x)^\Sg\x\AKf^\x$, where $\C_1$ is the unit circle in $\C^\x$. Let $\Cl_n=\cK^\x\AF^\x\bksl \AK^\x/U_n$ and let $[\cdot]_n:\AK^\x\to\Cl_n$ be the quotient map. Let $\Cl_\infty=\prolim_n\Cl_n$. For $a\in\AK^\x$, we let $[a]:=\prolim_n[a]_n\in\Cl_\infty$ be the holomorphic image in $\Cl_\infty$. Henceforth, every $\nu\in\antich$ will be regarded implicitly as a \padic character of $\Cl_\infty$ by \emph{geometrically} normalized reciprocity law.

Let $\padicf\in V(\opcpt_0(\frakl),\EucO)$ for some finite extension $\EucO$ of $\Zp$ and let $\wh\ads$ be the \padic avatar of $\ads$. Assuming the following:
\begin{mylist}\item[(i)] $\padicf$ is a $U_\frakl$-eigenform with the eigenvalue $a_\frakl(\padicf)\in\Zbarp^\x$;
\item[(ii)] $\padicf(x_n(ta))=\pads^{-1}(a)\padicf(x_n(t)),\, a\in U_n\cdot \AF^\x$,\end{mylist}
Hida in \cite[(3.9)]{Hida:nonvanishingmodp} associates a $\Zbarp$-valued measure $\vp_{\padicf}$ on $\Cl_\infty$ to the $U_\frakl$-eigenform $\padicf$ such that for a function $\phi:\Cl_n\to\Zbarp$, we have
\beq\label{E:measure.N}\int_{\Cl_\infty}\phi
d\vp_{\padicf}:=a_\frakl(\padicf)^{-n}\cdot \sum_{[t]_n\in\Cl_n}\padicf(x_n(t))\pads(t)\phi([t]_n).\eeq

\subsubsection{}
Let $\Delta$ be the torsion subgroup of $\Cl_\infty$. Let $\Cl^\alg$ be the subgroup of $\Cl_\infty$ generated by $[a]$ for $a\in(\AK^{(\frakl)})^\x$ and $\Delta^\alg=Cl^\alg\cap \Delta$.
We choose a set of representatives $\cB=\stt{b}$ of
$\Delta/\Delta^\alg$ in $\Delta$ and a set of representatives $\cR=\stt{r}$ of
$\Delta^\alg$ in $(\AKf^{(p\frakl)})^\x$. Thus
$\Delta=\cB[\cR]=\stt{b[r]}_{b\in\cB,r\in\cR}$. For $a\in(\AKf^{(p\frakl)})^\x$, we define
\[\padicf|[a]:=\padicf|\rho_{\cmpt}(a),\,\rho_{\cmpt}(a):=\cmpt^{-1}\rho(a)\cmpt\in G(\AFf^{(p\frakl)}).\] By definition, $\padicf|[a](x_n(t))=\padicf(x_n(ta))$. Following Hida (\cf\cite[(4.4) p.25]{Hida:nonvanishingnew}), we put
\beq\label{E:average.N}\padicf^\cR=\sum_{r\in\cR}\pads(r)\padicf|[r].\eeq

In \cite{Hida:nonvanishingmodp}, Hida reduces the non-vanishing of $L$-values to the non-vanishing of Eisenstein series by proving the following theorem.
\begin{thm}[Theorem 3.2 and Theorem 3.3 \cite{Hida:nonvanishingmodp}]\label{T:1.N}
Suppose the following
conditions in addition to $(\mathrm{unr})$ and $(\mathrm{ord})$:
\begin{description}[\breaklabel\setlabelstyle{\itshape}]\item [$(\mathrm{H})$] Write the
order of the Sylow $\ell$-subgroup of $\bbF[\ads]^\x$
as $\ell^{r(\ads)}$. Then there exists a strict ideal class
$\frakc\in\Cl_\cF$ such that $\frakc=\frakc(\fraka)$ for some
$R$-ideal $\fraka$ and for every $u\in O$ prime to $\frakl$, we
can find $\beta\con u\mod{\frakl^{r(\ads)}}$ with
$\bfa_\beta(\padicf^\cR,\frakc)\not \con 0\pmod{\frakm_p}$,
\end{description}
where $\bfa_\beta(\padicf^\cR,\frakc)$ is the $\beta$-th Fourier coefficient of $\padicf^\cR$ at the cusp $(O,\frakc^{-1})$.
Then
\[\int_{\Cl_\infty}\nu d\padicf\not \con 0\pmod{\frakm_p}\text{ for almost all $\nu\in\antich$}.\]
\end{thm}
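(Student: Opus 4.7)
The plan is to follow Hida's strategy in \cite{Hida:nonvanishingmodp}, exploiting Chai's Zariski density of CM points on the ordinary locus modulo $p$. By the defining formula \eqref{E:measure.N} of the measure $\varphi_{\padicf}$, for $\nu$ factoring through $\Cl_n$ one has
\[
\int_{\Cl_\infty}\nu\,d\varphi_{\padicf}
= a_\frakl(\padicf)^{-n}\sum_{[t]_n\in\Cl_n}\padicf(x_n(t))\pads(t)\nu(t).
\]
First I would decompose the torsion subgroup as $\Delta=\cB[\cR]$, pull the $\cR$-average inside, and rewrite the integral as a finite sum over $b\in\cB$ of values of the averaged form $\padicf^\cR$ from \eqref{E:average.N} evaluated at CM points $x_n(tb)$, twisted by $\pads\nu$. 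This repackaging isolates the genuinely anticyclotomic variation along $\Gamma^-$ from variation along the finite algebraic part.

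Next I would argue by contradiction using a Sinnott-style orthogonality relation. If the set of $\nu\in\antich$ for which the integral vanishes modulo $\frakm_p$ were not small in the required sense, then Pontryagin duality between finite-order characters of $\Cl_\infty$ and elements of $\Cl_\infty$, together with an averaging trick applied to the finite support of $\padicf^\cR$ on $\Cl_n$, would force $\padicf^\cR\equiv 0\pmod{\frakm_p}$ along the full $\Gamma^-$-orbit of some CM point $x_n(b)$. The crucial deep input is then the density theorem of Chai--Hida: the Zariski closure of this $\Gamma^-$-orbit in $\Ig_{\opcpt,\infty}\otimes\Fpbar$ is a Tate-linear formal subscheme of positive dimension that is in fact the full ambient component. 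Consequently $\padicf^\cR$ vanishes identically modulo $\frakm_p$. By the $q$-expansion principle for Katz $p$-adic modular forms, all Fourier coefficients $\bfa_\beta(\padicf^\cR,\frakc)$ at every cusp would then be $\equiv 0 \pmod{\frakm_p}$, directly contradicting hypothesis $(\mathrm{H})$.

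The main obstacle, as in \cite{Hida:nonvanishingmodp}, is the Zariski density of the appropriate $\Gamma^-$-orbit of CM points modulo $p$: Chai's classification of formal subschemes stable under the relevant torus has to be applied to the image of $x_n$ inside the Igusa tower, and the $\frakl$-local contribution to $\Gamma^-$ only moves CM points within a bounded radius in the $\frakl$-tower. This is exactly why $(\mathrm{H})$ asks for non-vanishing of some $\bfa_\beta(\padicf^\cR,\frakc)$ with $\beta$ ranging over an arbitrary residue class modulo $\frakl^{r(\ads)}$: it guarantees that translating the distinguished Fourier coefficient by units at $\frakl$ still detects the orbit one needs. The density argument itself and the compatibility of the measure with translation by $\Gamma^-$ are the technical core one quotes from Hida's papers; the work of the present article lies in verifying $(\mathrm{H})$ for the specific Eisenstein series $\padicf$ that will be constructed in \secref{S:ES}.
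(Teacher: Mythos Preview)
The paper does not actually prove this theorem: it is stated as a direct citation of Theorem~3.2 and Theorem~3.3 of \cite{Hida:nonvanishingmodp}, with only a remark (following the statement) clarifying that the degree-one case is Theorem~3.2 and the general case follows from the slightly weaker hypothesis~(h) in Theorem~3.3 of \emph{loc.\ cit.} Your proposal goes further than the paper by sketching the underlying argument from Hida's work, and the outline you give --- averaging over $\cR$, a Sinnott-type contradiction, Chai--Hida Zariski density of the $\Gamma^-$-orbit of CM points in the ordinary Igusa tower mod $p$, and the $q$-expansion principle --- is a faithful summary of Hida's strategy. So your approach is correct and coincides with what the paper invokes, though the paper itself simply quotes the result rather than reproducing the proof.
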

\begin{remark}As pointed by the referee, if $\frakl$ has degree one over $\Q$, the above theorem is Theorem 3.2 \cite{Hida:nonvanishingmodp}. In general, the theorem holds under the assumption (h) in Theorem 3.3 \loccit, which is slightly weaker than (H) (See the discussion \cite[p.778]{Hida:nonvanishingmodp}).
\end{remark}
\section{Construction of the Eisenstein series}\label{S:ES}
\subsection{}\label{S:41}
Let $\ads$
be a Hecke character of $\cK^\x$ with infinity type
$k\Sg+\kappa(1-c)$, where $k\geq 1$ is an integer and $\kappa=\sum \kappa_\sg\sg\in \Z[\Sg]$, $\kappa_\sg\geq 0$. Let $\frakc(\ads)$ be the conductor of $\ads$. We assume that $\frakC=\frakc(\ads)\frakS$, where $\frakS$ is only divisible by primes split in $\cK/\cF$ and $(\frakc(\ads)\frakl,\frakS)=1$. Put\[\nads=\ads\Abs^{-\onehalf}_{\AK}\text{ and }\ads_+=\ads|_{\AF^\x}.\]
Let $\opcpt^0_\infty:=\prod_{v\in\bda}\SO(2,\R)$ be a maximal compact subgroup of $G(\cF\ot_\Q\R)$. For $s\in\C$, we let $I(s,\ads_+)$ denote the space consisting of smooth and $\opcpt^0_\infty$-finite functions $\phi:G(\AF)\to \C$ such that \[\phi(\MX{a}{b}{0}{d}g)=\ads_+^{-1}(d)\abs{\frac{a}{d}}_{\AF}^s\phi(g).\]
%For $s\in\C$, we consider the induced representation:
%\[I(s,\ads_+)=\stt{\text{ smooth and $K^0_\infty$-finite }\phi:G(\AF)\to \C\mid \phi(\MX{a}{b}{0}{d}g)=\ads_+^{-1}(d)\abs{\frac{a}{d}}_{\AF}^s\phi(g)}.\]
Conventionally, functions in $I(s,\ads_+)$ are called \emph{sections}. Let $B$ be the upper triangular subgroup of $G$. The adelic Eisenstein series associated
to a section $\phi\in I(s,\ads_+)$ is defined by
\[E_\A(g,\phi)=\sum_{\gamma\in B(\cF)\bksl G(\cF)}\phi(\gamma g).\]
The series $E_\A(g,\phi)$ is absolutely convergent for $\Re s\gg 0$.
\subsection{Fourier coefficients of Eisenstein series} Put
$\bdw=\MX{0}{-1}{1}{0}$. Let $v$ be a place of $\cF$ and let $I_v(s,\ads_+)$ be the local constitute of $I(s,\ads_+)$ at $v$. For $\phi_v\in I_v(s,\ads_+)$ and $\beta\in \cF_v$, we recall that the $\beta$-th local Whittaker integral $W_\beta(\phi_v,g_v)$ is defined by \begin{align*}W_\beta(\phi_v,g_v)=&
\int_{\cF_v}\phi_v(\bdw\MX{1}{x_v}{0}{1}g_v)\addchar(-\beta x_v)dx_v,
\intertext{and the intertwining operator $M_\bdw$ is defined by}
M_\bdw\phi_v(g_v)=&\int_{\cF_v}\phi_v(\bdw \MX{1}{x_v}{0}{1}g_v)dx_v.\end{align*}
By definition, $M_\bdw\phi_v(g_v)$ is the $0$-th local Whittaker integral. It is well known that local Whittaker integrals converge absolutely for $\Re s\gg 0$, and have meromorphic continuation to all $s\in\C$.

If $\phi=\ot_v\phi_v$ is a decomposable section, then it is
well known that $E_\A(g,\phi)$ has the following Fourier expansion:
\beq\label{E:WF.N}\begin{aligned}&E_\A(g,\phi)=\phi(g)+M_\bdw\phi(g)+\sum_{\beta\in\cF}W_\beta(E_\A,g),\text{ where }\\
M_\bdw\phi(g)=&\frac{1}{\sqrt{\abs{D_\cF}_\R}}\cdot \prod_vM_\bdw\phi_v(g_v)\,;\,
W_\beta(E_\A,g)=\frac{1}{\sqrt{\abs{D_\cF}_\R}}\cdot\prod_vW_\beta(\phi_v,g_v).\end{aligned}\eeq
The sum $\phi(g)+M_\bdw\phi(g)$ is called the \emph{constant term} of $E_\A(g,\phi)$.

\subsection{The choice of local sections and Fourier coefficients}
In this subsection, we will choose for each place $v$ a good local
section $\Section$ in $I_v(s,\ads_+)$ and calculate its local $\beta$-th Fourier coefficient for $\beta\in \cF_v^\x$.

\label{S:Local_ES}\subsubsection{}\label{S:ES_notation}
We first introduce some notation and definitions. Let $S^\circ=\stt{v\in\bdh\mid v\ndivide\frakl\frakC\frakC^c D_{\cK/\cF}}$. Let $v$ be a place of $\cF$. Let $L/\cF_v$ be a finite extension and let $d_L$ be a generator of the absolute different $\cD_L$ of $L$. Let $\addchar_L:=\addchar\circ\Tr_{L/\cF_v}$. Given a character $\mu:L^\x\to\C$, we recall that the epsilon factor $\ep(s,\mu,\addchar_L)$ in \cite{Tate:Number_theoretic_bk} is defined by
\begin{align*}\ep(s,\mu,\addchar_L)&=\abs{c}_L^s\int_{c^{-1}\cO_L^\x}\mu^{-1}(x)\addchar_L(x)d_Lx\quad(c=d_L\uf_L^{a(\mu)}).\end{align*}
Here $d_Lx$ is the Haar measure on $L$ self-dual with respect to $\addchar_L$. If $\vp$ is a \BS function on $L$, the zeta integral $Z(s,\mu,\vp)$ is given by
\[Z(s,\mu,\vp)=\int_{L}\vp(x)\mu(x)\abs{x}_L^s\dx x\quad(s\in\C).\]

The local root number $W(\mu)$ is defined by \[W(\mu):=\ep(\onehalf,\mu,\addchar_L)\] (\cf \cite[p.281 (3.8)]{Murase-Sugano:Local_theory_primitive_theta}). It is well known that $\abs{W(\mu)}_\C=1$ if $\mu$ is unitary.

To simplify the notation, we let $\Fv=\cF_v$ (resp.
$\Kv=\cK\ot_{\cF}\cF_v$) and let $d_F=d_{\cF_v}$ be the fixed generator of the absolute different $\cD_F$ in \subsecref{S:different}.
Write $\ads$ (resp. $\ads_+$, $\nads$) for $\ads_v$ (resp. $\ads_{+,v}$, $\nads_v$).
If $v\in\bdh$, we let $\OFv=\cO_{\Fv}$ (resp.
$\OKv=\OK\ot_{\OF}\OFv$) and let $\uf=\uf_v$ be a uniformizer of $F$. For a set $Y$, denote by $\bbI_Y$ the characteristic function of $Y$.
\subsubsection{$v$ is archimedean}\label{S:infinite} Let $v=\sg\in \Sg$ and $\Fv=\R$. For $g\in G(\Fv)=\GL_2(\R)$, we put
\[\bfdelta(g)=\abs{\det (g)}\cdot \abs{J(g,i)\ol{J(g,i)}}^{-1}.\]
Define the section $\phi^h_{k,s,\sg}\in I_v(s,\chi_+)$ of weight $k$ by
\beq\phi^h_{k,s,\sg}(g):=J(g,i)^{-k}\bfdelta(g)^s.\eeq
The intertwining operator $M_\bdw\phi_{k,s,\sg}$ is given by
\beq\label{E:M2.H}M_\bdw\phi^h_{k,s,\sg}(g)=i^{k}(2\pi)\frac{\Gamma(k+2s-1)}{\Gamma(k+s)\Gamma(s)}\cdot \ol{J(g,i)}^k\det(g)^{-k}\bfdelta(g)^{1-s}.\eeq

For $(x,y)\in \R\x\R_+$ and $\beta\in\R^\x$, it is well known that
\beq\label{E:FC1.N}
W_\beta(\phi^h_{k,s,\sg},\MX{y}{x}{0}{1})|_{s=0}=\frac{(2\pii)^k}{\Gamma(k)}\sg(\beta)^{k-1}\exp(2\pi
i\sg(\beta)(x+iy))\cdot\ch_{\R_+}(\sg(\beta)).\eeq Define the section $\phi^{\nh}_{k,\kappa_\sg,s,\sg}\in I(s,\chi_+)$ of weight $k+2\kappa_\sg$ by
\beq\label{E:5.N}\phi^{\nh}_{k,\kappa_\sg,s,\sg}(g):=J(g,i)^{-k-\kappa_\sg}\ol{J(g,i)}^{\kappa_\sg}\bfdelta(g)^s.\eeq
Let $V_+$ be the \emph{weight raising} differential operator in \cite[p.165]{Jacquet_Langlands:GLtwo} given by
\[V_+=\MX{1}{0}{0}{-1}\ot 1+\MX{0}{1}{1}{0}\ot i\in\Lie(\GL_2(\R))\ot_\R\C.\]
Denote by $V_+^{\kappa\sg}$ the operator $(V^+)^{\kappa_\sg}$ acting on $I_v(s,\ads_+)$. By \cite[Lemma 5.6 (iii)]{Jacquet_Langlands:GLtwo}, we have
\beq\label{E:6.N}V_{+}^{\kappa_\sg\sg}\phi^h_{k,s,\sg}=\frac{2^{\kappa_\sg}\Gamma(k+\kappa_\sg+2s)}{\Gamma(k+2s)}\phi^\nh_{k,\kappa_\sg,s,\sg}.\eeq

\subsubsection{$v\in S^\circ$}\label{S:spherical} In this case, $\ads$ is
unramified. Define $\Section(g)$ to be the spherical Godement section in $I_v(s,\ads_+)$. To be precise, put
\begin{align*}\Section(g)=&f_{\Phi_v}(g):=\abs{\det g}^s\int_{\Fv^\x}\Phi_v((0,t)g)\ads_+(t)\abs{t}^{2s}\dx t,\text{ where }\\
&\Phi_v=\ch_{\OFv\oplus \OFv^*}.
\end{align*}
It is well known that the local Whittaker integral is
\beq\label{E:FC2.N}W_\beta(\Section,\MX{1}{}{}{\bfc_v^{-1}})|_{s=0}=\ads_+(\bfc_v)\cdot\frac{1-\nads(\uf)^{v(\beta\bfc_v)+1}}{1-\nads(\uf)}\cdot\abs{\cD_\cF}^{-1}\cdot\bbI_{\OFv}(\beta\bfc_v),
\eeq
and the intertwining operator is given by
\beq\label{E:M3.H}M_\bdw\Section(\MX{1}{}{}{\bfc_v^{-1}})=L_v(2s-1,\ads_+)\abs{\bfc_v}^{1-s}.\eeq

\subsubsection{$v|\Csplit\Csplit^c$}\label{S:Csplit} If $v|\Csplit\Csplit^c$ is split in $\cK$, write $v=w\wbar$ with $w|\Csplit$ and $\ads_v=(\ads_w,\ads_{\wbar})$. Then $a(\ads_{w})\geq
a(\ads_{\wbar})$. We shall define our local section at $v$ to be the Godement section associated to certain \BS functions. We first introduce some \BS functions. For a character $\mu: F^\x\to\C^\x$, we define
\[\vphi_\mu(x)=\ch_{\OFv^\x}(x)\mu(x)\quad(x\in F).\]
Define $\vphi_w=\vphi_{\ads_{w}}$ and
\[\vphi_{\wbar}=\begin{cases}\vphi_{\ads_{\wbar}^{-1}}&\text{ if $\ads_{\wbar}$ is ramified},\\
\ch_{O_v}&\text{ if $\ads_{\wbar}$ is unramified}.
\end{cases}\]
Let $\Phi_v(x,y)=\vphi_{\wbar}(x)\wh{\vphi}_w (y)$, where $\wh{\vphi}_w$ is the Fourier transform of $\vphi_w$ defined by
\[\wh\vphi_w(y)=\int_{\Fv}\vphi_w(x)\addchar(yx)dx.\]
Define $\Section\in I_v(s,\ads_+)$ by \beq\Section(g)=f_{\Phi_v}(g):=\abs{\det
g}^s\int_{F^\x}\Phi_v((0,t)g)\ads_+(t)\abs{t}^{2s}\dx t.\eeq

A straightforward calculation shows that the local Whittaker integral
is \beq\label{E:FC3.N}
\begin{aligned}W_\beta(\Section,1)&=\int_{\Fv^\x}\vphi_{\wbar}(x)\wh{\vphi}_w(-\beta
x^{-1})\cdot\ads_+(x)\abs{x}^{2s-1}\dx x \\
&=\int_{\Fv^\x}\vphi_{\wbar}(x)\vphi_w(\beta
x^{-1})\cdot\abs{\cD_F}^{-1}\cdot\ads_+(x)\abs{x}^{2s-1}\dx x\\
&=\ads_+(\beta)\vphi_{\wbar}(\beta)\abs{\beta}^{2s-1}\cdot\abs{\cD_\cF}^{-1},\end{aligned} \eeq and the intertwining operator is given by
\beq\label{E:M1.H} M_\bdw\Section(1)=0.\eeq

\subsubsection{$v=\frakl$}\label{S:frakl} Let $\phi_{\ads,v,s}\in
I_v(s,\ads_+)$ be the unique $N(\OFv)$-invariant section
supported in the big cell $B(\Fv)\bdw N(\OFv^*)$ and $\phi_{\ads,v,s}(\bdw)=1$. One checks easily that $\Section|U_\frakl$ given by
\[\Section|U_\frakl(g)=\sum_{u\in\OFv^*/\frakl\OFv^*}\Section(g\MX{\uf}{u}{0}{1})\] is also supported in the big cell and is invariant by $N(\OFv^*)$. In particular,
$\Section$ is an $U_\frakl$-eigenform, and the eigenvalue is $\chi_+^{-1}(\uf_\frakl)$.

The local $\beta$-th Whittaker integral is given by
\beq\label{E:FC4.N} \begin{aligned}W_\beta(\phi_{\ads,v,s},\MX{1}{}{}{\bfc^{-1}})|_{s=0}&=\int_{F}\phi_{\ads,v,s}(\bdw\MX{1}{x}{}{1}\MX{1}{}{}{\bfc^{-1}})\addchar(-\beta x)dx|_{s=0}\\
&=\abs{\bfc}\ch_{\OFv}(\beta\bfc),\end{aligned}\eeq
and the intertwining operator is given by $M_\bdw\Section(\DII{1}{\bfc^{-1}})|_{s=0}=\abs{\bfc}$.
\def\UF{\delta}
%If $\ads_\frakl$ is ramified but $\ads|_{\cO^\x}=1$, we let
%$\phi_\frakl=\phi^w$ the section supported in $PwN(\cO_\frakl)$.
\def\twobeta{\beta}
\subsubsection{$v|D_{\cK/\cF}\frakC^-$}\label{S:CramCinert} In this case, $\Kv$ is a field and $G(F)=B(F)\rho(\Kv^\x)$. Let $w$ be the place of $\Kv$ above $v$ and let $\uf_E$ be a uniformizer of $E$. Let $\Cram'$ be the product of ramified primes where $\ads$ is unramified, \ie $\Cram'=\prod_{\frakq|\cD_{\cK/\cF},\frakq\ndivide \frakC^-}\frakq$. Let $\Section$ be the unique function on
$G(F)$ such that
\beq\label{E:Dinert.N}\Section(\MX{a}{b}{0}{d}\rho(z)\cmptv)=L(s,\ads_v)\cdot \ads_+^{-1}(d)\abs{\frac{a}{d}}^s\cdot\ads^{-1}(k),\,b\in
B(F),\,z\in \Kv^\x,\eeq
where $L(s,\ads_v)$ is the local Euler factor of $\ads_v$ defined by
\[L(s,\ads_v)=\begin{cases}1&\text{ if }v|\Cram\Cinert,\\
\frac{1}{1-\ads_v(\uf_E)\abs{\uf_E}_E^s}&\text{ if }v|\Cram'.\end{cases}\]
Then $\Section$ defines a smooth section in
$I_v(s,\ads_+)$.

To calculate the local $\beta$-th Whittaker integral of $\Section$, we recall that in \subsecref{S:different}, we have fixed $\UF=\UF_v=2\skewhf d_F^{-1}$ a generator of $\cD_{\cK/\cF}$ and an $\OFv$-basis $\stt{1,\OKbasis}=\stt{1,\bftheta_v}$ of $\OKv$ so that $\UF=2\OKbasis$ if $v\ndivides 2$ and $\UF=\OKbasis-\ol{\OKbasis}$ if $v|2$. In addition, $\OKbasis$ is a uniformizer of $\OKv$ if $v$ is ramified. Let $t=t_v=2\OKbasis-\UF=\OKbasis+\ol{\OKbasis}\in \OFv$. Let $\psi^\circ(x):=\psi(-d_F^{-1}x)$ and $\ads_s=\ads\Abs_E^s$ for $s\in\C$. For $\Re s\gg 0$, we have
\begin{align*}&\frac{1}{L(s,\ads_v)}\cdot W_{\beta}(\Section,1)\\
=& \int_\Fv\Section(\bfw\MX{1}{x+2^{-1}td_F^{-1}}{0}{1}\DII{d_F^{-1}}{d_F}\cmptv)\addchar^\circ(d_F\beta
x)dx\\
=&\addchar^\circ(-2^{-1}t\beta)\ads_s(d_F)\abs{d_F^{-2}}\cdot
\int_F\Section(\MX{\frac{1}{x^2+D}}{\frac{-x}{x^2+D}}{0}{1}\MX{x}{-D}{1}{x}\cmptv)\addchar^\circ(-d_F^{-1}\beta x)dx\\
=&\addchar^\circ(-2^{-1}t\beta)\ads_s(d_F)\abs{d_F^{-2}}\cdot
\int_F\ads_s^{-1}(x+\skewhf)\addchar^\circ(d_F^{-1}\beta
x)dx\\
=&\addchar^\circ(-2^{-1}t\beta)\abs{d_F^{-1}}\cdot
\int_F\ads_s^{-1}(x+2^{-1}\delta)\addchar^\circ(\beta
x)dx.\end{align*}
We put
\beq\label{E:2.N}A_\beta(\ads_s):=\int_F\ads_s^{-1}(x+2^{-1}\UF)\addchar^\circ(\beta x)dx.\eeq
Making change of variable, we find that
\beq\label{E:1.N}\begin{aligned}A_\beta(\ads_s)=&\addchar^\circ(2^{-1}t\beta)\cdot \wtd A_\beta(\ads_s), \text{ where}\\
\wtd A_\beta(\ads_s)=&\int_F\ads^{-1}_s(x+\OKbasis)\addchar^\circ(\twobeta x)dx.\end{aligned}\eeq
In particular, the intertwining operator $M_\bdw\Section(1)=\abs{d_F^{-1}}\cdot L(s,\ads_v)\wtd A_0(\ads_s)$.
We investigate the analytic behavior of $\wtd A_\beta(\ads_s)$. For $\beta\in F$ and $M\geq v(\frakC^-)$, we have\[\wtd A_\beta(\ads_s)=\int_{\uf^{-M}\OFv}\ads_s^{-1}(x+\OKbasis)\addchar^\circ(\twobeta x)dx+\sum_{j>M}\nads\Abs^s(\uf^j)\int_{\OFv^\x}\ads^{-1}(x)\addchar^\circ(\twobeta \uf^{-j}x)dx \]
for $\Re s$ sufficiently large. This shows that $\wtd A_\beta(\ads_s)$ has meromorphic continuation to all $s\in\C$, and $\wtd A_\beta(\ads_s)$ is holomorphic at $s=0$ except when $\beta=0$, $\ads_v$ is unramified and $\nads(\uf)=1$. In particular, when $k\geq 2$, $\wtd A_0(\ads_s)$ and the intertwining operator $M_\bdw\Section(1)$ are finite at $s=0$.

Now we have
\beq\label{E:FC9.N}
W_\beta(\Section,1)|_{s=0}=\abs{d_F^{-1}}\cdot L(s,\ads_v)\wtd A_\beta(\ads_s)|_{s=0}.
\eeq
In what follows, we let $\beta\in F^\x$ and let $\wtd A_\beta(\ads):=\wtd A_\beta(\ads_s)|_{s=0}$ (resp. $A_\beta(\ads):=A_\beta(\ads_s)|_{s=0}$). Then $A_\beta(\ads)=\addchar^\circ(2^{-1}t\beta)\cdot \wtd A_\beta(\ads)$. On the other hand, by our choice of $\OKbasis$ it is clear that $\abs{x+\OKbasis}_E\geq \abs{\OKbasis}_E$ for all $x\in \uf^{-M}\OFv$. Let $M_{\frakC,\beta}=\max\stt{v(\frakC^-),v(\frakC^-)+v(\twobeta)}$. Then for $M\geq M_{\frakC,\beta}$, we have
\beq\label{E:ABchi.N}\begin{aligned}\wtd A_\beta(\ads)&=\int_{\uf^{-M}\OFv}\ads^{-1}(x+\OKbasis)\addchar^\circ(\twobeta x)dx\\
&=\abs{\uf}^{1+M}\sum_{x\in \OFv/(\uf^{1+2M})}\ads^{-1}(\uf^{-M}x+\OKbasis)\addchar^\circ(\twobeta\uf^{-M}x).\end{aligned}
\eeq

The following lemma shows the $p$-integrality of these local Whittaker integrals $W_{\beta}(\Section,1)|_{s=0}$.
\begin{lm}\label{L:ramified_case.N}Suppose that $v|D_{\cK/\cF}\frakC^-$. There exists a finite extension $\EucO$ of $\cO_{\cF,\setp}$ such that for all $\beta\in F^\x$
\[W_{\beta}(\Section,1)|_{s=0}\in\EucO.\] In addition, if $v|\frakC^-$, then \beq\label{E:10.N}\begin{aligned}W_{\beta}(\Section,1)|_{s=0}=\addchar^\circ(-2^{-1}t\beta)\abs{d_F^{-1}}\cdot A_\beta(\ads)
=\abs{d_F^{-1}}\cdot \wtd A_\beta(\ads).\end{aligned}\eeq
If $v|\Cram'$ and $v(\twobeta)=-1$, then
\beq\label{E:FC8.N}
W_{\beta}(\Section,1)|_{s=0}=\abs{d_F^{-1}}\ads^{-1}(\OKbasis)\abs{\uf}.
\eeq
\end{lm}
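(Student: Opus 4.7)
The plan is to start from the identity
\[
W_\beta(\Section, 1)\bigr|_{s=0} \;=\; \abs{d_F^{-1}}\cdot L(s,\ads_v)\,\wtd A_\beta(\ads_s)\bigr|_{s=0},
\]
derived from the unfolding computation just above the lemma, and to treat the $p$-integrality statement together with the two closed-form identities according to whether $v\mid\frakC^-$ (so $\ads_v$ is ramified and $L(s,\ads_v)\equiv 1$) or $v\mid\Cram'$ (so $\ads_v$ is unramified at a prime ramified in $\cK/\cF$, and $L(s,\ads_v)=(1-\ads(\uf_E)\abs{\uf_E}_E^s)^{-1}$). In both cases the key technical input is the truncated finite-sum expression \eqref{E:ABchi.N} for $\wtd A_\beta(\ads)$ valid at any $M\geq M_{\frakC,\beta}$.

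For $v\mid\frakC^-$, formula \eqref{E:10.N} is immediate from the starting identity combined with the relation $A_\beta(\ads)=\addchar^\circ(2^{-1}t\beta)\wtd A_\beta(\ads)$ recorded in \eqref{E:1.N}. For $p$-integrality I plug the truncation \eqref{E:ABchi.N} into this: the prefactor $\abs{\uf}^{1+M}$ is a $p$-adic unit because $v\nmid p$, and each summand is a root of unity $\addchar^\circ(\twobeta\uf^{-M}x)$ times $\ads^{-1}(\uf^{-M}x+\OKbasis)$. Since $v\nmid p$, every value of $\ads_v$ lies in $\iota_p(\overline{\Q}^\x)\cap\overline{\Z_p}^\x$, so the full sum lies in a finite extension $\EucO$ of $\cO_{\cF,(p)}$ chosen to contain $\ads^{-1}(\cK_v^\x)$.

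For $v\mid\Cram'$, verifying \eqref{E:FC8.N} when $v(\twobeta)=-1$ requires a direct computation. Decomposing $F^\x=\bigsqcup_{n\in\Z}\uf^n\OFv^\x$ and using that $\ads$ is unramified together with $v_E(\OKbasis)=1$ and $v_E(\uf)=2$, the exponent $v_E(x+\OKbasis)$ equals $2n$ for $n<0$, $0$ for $n=0$, and $1$ for $n\geq 1$, so $\ads^{-1}(x+\OKbasis)$ is the corresponding power of $\alpha:=\ads^{-1}(\uf_E)$ on each piece. The inner character integrals $\int_{\OFv^\x}\addchar^\circ(\twobeta\uf^n y)\,dy$ vanish for $n<0$ (the argument has valuation $\leq -2$, outside the dual of $\OFv$), equal $-q^{-1}$ at $n=0$, and contribute a geometric series in $q^{-1}$ for $n\geq 1$. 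Summing yields $\wtd A_\beta(\ads)=q^{-1}(\alpha-1)$, and multiplying by $L(0,\ads_v)=\alpha/(\alpha-1)$ collapses to $q^{-1}\alpha=\abs{\uf}\ads^{-1}(\OKbasis)$, proving \eqref{E:FC8.N}. The same type of splitting for arbitrary $\beta\in F^\x$ shows that $L(0,\ads_v)\wtd A_\beta(\ads)$ is always a finite $\Z[\abs{\uf},\abs{\uf}^{-1}]$-linear combination of powers of $\alpha$, hence lies in $\EucO=\cO_{\cF,(p)}[\alpha]$. The main obstacle, and the conceptual heart of the lemma for $v\mid\Cram'$, is precisely this cancellation of the factor $1-\ads(\uf_E)$ in the denominator of $L(0,\ads_v)$ against the corresponding factor produced by the geometric series in $\wtd A_\beta(\ads)$: since $1-\ads(\uf_E)$ need not be a $p$-unit when $\ads_v(\uf_E)\equiv 1\pmod{\frakm_p}$, $p$-integrality of the product depends on the local integral "knowing about" the Euler factor it is to be paired with.
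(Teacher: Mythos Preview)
Your proposal is correct and follows essentially the same route as the paper: reduce to $\wtd A_\beta(\ads)$ via \eqref{E:FC9.N} and \eqref{E:1.N}, use the truncated sum \eqref{E:ABchi.N} for $p$-integrality when $v\mid\frakC^-$, and for $v\mid\Cram'$ decompose $F$ into multiplicative shells and exploit that $\ads_v$ is unramified so that $\ads^{-1}(x+\OKbasis)$ depends only on $v_E(x+\OKbasis)$. The paper splits $F=\uf\OFv\cup\bigsqcup_{j\geq 0}\uf^{-j}\OFv^\x$ and keeps the parameter $s$ throughout, while you use the full shell decomposition $\bigsqcup_{n\in\Z}\uf^n\OFv^\x$ at $s=0$; these are equivalent since the terms with $n<0$ vanish by your character-integral observation, so no analytic continuation issue arises.

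One small point you should tighten: the lemma asks for a \emph{single} finite extension $\EucO$ valid for all $\beta\in F^\times$. In the $v\mid\frakC^-$ case your argument, as written, produces for each $\beta$ a sum involving roots of unity $\addchar^\circ(\beta\uf^{-M}x)$ with $M=M_{\frakC,\beta}$ depending on $\beta$, and a priori the order of these roots of unity could grow as $v(\beta)\to -\infty$. The paper handles this by first noting that $\wtd A_\beta(\ads)=0$ once $v(\beta)<-1-v(\frakC^-)$, which bounds the relevant additive-character values uniformly; you should insert this vanishing (an immediate consequence of the local constancy of $x\mapsto\ads^{-1}(x+\OKbasis)$ on $\uf^{-v(\frakC^-)}\OFv$). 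Similarly, in the $\Cram'$ case your closing sentence correctly identifies the cancellation of $1-\ads(\OKbasis)$ as the crux, but ``the same type of splitting'' is a sketch: the paper carries this out and obtains the closed form $\wtd A_\beta(\ads_s)=(1-\ads_s(\OKbasis))(1+\abs{\uf}\ads_s^{-1}(\OKbasis))\frac{1-\alpha_s^{v(\beta)+2}}{1-\alpha_s}$ for $v(\beta)\geq 0$ (and $0$ for $v(\beta)<-1$), making the factor $(1-\ads(\OKbasis))$ and hence the cancellation with $L(0,\ads_v)^{-1}$ manifest for every $\beta$.
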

\begin{proof} First note that it is not difficult to deduce from \eqref{E:ABchi.N} that $\wtd A_\beta(\ads)=0$ if $v(\twobeta)<-1-M_{\frakC,\beta}$, and that $\wtd A_\beta(\ads)$ belongs to the $\cO_{\cF,\setp}$-algebra generated by the values of $\ads$ and $\addchar^\circ(\uf^{-2v(\frakC^-)-1})$ for all $\beta\in F^\x$. Then the assertions are clear if $v|\frakC^-$. Suppose that $v|\Cram'$. Then $v$ is ramified in $E$ and $\ads$ is unramified at $v$.
For $s\in\C$, let $\al_s:=\ads_s(\uf)\abs{\uf}^{-1}$. For $\Re s\gg 0$, we have
\begin{align*} \wtd A_\beta(\ads_s)&=\int_F\ads^{-1}_s(x+\OKbasis)\addchar^\circ(\twobeta x)\\
&=\int_{\uf\OFv}\ads_s^{-1}(x+\OKbasis)\addchar^\circ(\twobeta x)dx+\sum_{j=0}^{v(\twobeta)+1}\abs{\uf^{-j}}\ads_s(\uf^j)\int_{\OFv^\x}\addchar^\circ(\twobeta \uf^{-j}x)dx\\
&=\ads_s^{-1}(\OKbasis)\int_{\uf\OFv}\addchar^\circ(\twobeta x)+\sum_{j=0}^{v(\twobeta)+1}\al_s^j\cdot (\int_{\OFv}\addchar^\circ(\twobeta\uf^{-j} x)dx -\int_{\uf \OFv}\addchar^\circ(\twobeta\uf^{-j} x)dx).
\end{align*}
If $v(\twobeta)\geq 0$, we find that
\begin{align*}\wtd A_\beta(\ads_s)=&\ads_s^{-1}(\OKbasis)\abs{\uf}+\frac{1-\al_s^{v(\twobeta)+1}}{1-\al_s}\cdot (1-\abs{\uf})-\abs{\uf}\al_s^{v(\twobeta)+1}\\
=&(1-\ads_s(\OKbasis))(1+\abs{\uf}\ads_s^{-1}(\OKbasis))\cdot\frac{1-\al_s^{v(\twobeta)+2}}{1-\al_s}.
\end{align*}
In addition, we have $\wtd A_\beta(\ads_s)=0$ if $v(\twobeta)<-1$ and \[\wtd A_\beta(\ads_s)=(1-\ads_s(\OKbasis))\cdot \ads_s^{-1}(\OKbasis)\abs{\uf}\text{ if }v(\twobeta)=-1.\] In any case, the assertions in the case $v|\Cram'$ follow immediately from \eqref{E:FC9.N} and the formulas of $\wtd A_\beta(\ads_s)$.
\end{proof}
\subsubsection{Calculation of $\wtd A_\beta(\ads)$}\label{S:compuation_A.S}
We give an explicit calculation of the local integral $\wtd A(\ads)$ under the assumption $w(\frakC^-)=1$. Introduce an auxiliary integral $\cI(\beta)$ for $\beta\in F$ defined by
\[\cI(\beta):=\int_{\OFv}\ads^{-1}(x+\OKbasis)\addchar^\circ(\twobeta x)dx.\]
The explicit formulas of $\wtd A_\beta(\ads)$ are deduced from the following two lemmas.
\begin{lm}\label{L:preformula.N}Suppose $w(\frakC^-)\leq 1$. \begin{mylist}
\item If $v(\twobeta)\geq 0$ and $\ads|_{\OFv^\x}\not =1$, then
\begin{align*}\wtd A_\beta(\ads)
=&\cI(0)+\ads^{-1}\Abs(-d_F^{-1}\beta)\cdot \ep(1,\ads_+\Abs^{-1},\addchar).
\end{align*}
\item If $v(\twobeta)\geq 0$ and $\ads|_{\OFv^\x}=1$, then \[
\wtd A_\beta(\ads)=\cI(0)+\sum_{j=1}^{v(\twobeta)}\nads(\uf^{j})\cdot(1-\abs{\uf})-\nads(\uf^{v(\twobeta)+1})\cdot\abs{\uf}\quad(\nads=\ads\Abs_E^{-\onehalf}).\]
\item If $v(\twobeta)<0$, then $\wtd A_\beta(\ads)=\cI(\beta)$.
\end{mylist}
\end{lm}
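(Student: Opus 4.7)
The plan is to write
\[
\wtd A_\beta(\ads)=\cI(\beta)+J,\qquad J:=\int_{F\smallsetminus \OFv}\ads^{-1}(x+\OKbasis)\addchar^\circ(\twobeta x)\,dx,
\]
and then compute $J$ by breaking the outer annular region into shells and analyzing an inner Gauss-sum integral. The key simplification is that for $x\notin\OFv$, i.e.\ $v_F(x)\le -1$, one has $v_E(\OKbasis/x)=v_E(\OKbasis)-2v_F(x)\ge 2$, so $1+\OKbasis/x\in 1+\uf_E^2\cO_E\subset 1+\uf_E\cO_E$. Under the hypothesis $w(\frakC^-)\le 1$ the character $\ads$ is trivial on $1+\uf_E\cO_E$, hence $\ads^{-1}(x+\OKbasis)=\ads^{-1}(x)$ on $F\smallsetminus \OFv$. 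Decomposing $F\smallsetminus\OFv=\bigsqcup_{j\ge 1}\uf^{-j}\OFv^\x$ and using the identity $|\uf|^{-j}\ads^{-1}(\uf^{-j})=\nads(\uf^j)$ (valid since $|\uf|_E=|\uf|_F^2$ in both the inert and ramified cases), one obtains
\[
J=\sum_{j=1}^\infty \nads(\uf^j)\int_{\OFv^\x}\ads_+^{-1}(u)\addchar^\circ(\twobeta\uf^{-j}u)\,du.
\]

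Next I would analyze the inner integral as a Gauss sum. Since $w(\frakC^-)\le 1$, the restriction $\ads_+|_{\OFv^\x}$ factors through $\OFv^\x/(1+\uf\OFv)$, so $a(\ads_+|_{\OFv^\x})\le 1$. Standard Gauss-sum vanishing then yields
\[
\int_{\OFv^\x}\ads_+^{-1}(u)\addchar^\circ(au)\,du=0\quad\text{whenever }v_F(a)\le -2,
\]
while for $v_F(a)\in\{-1,0\}$ the integral is evaluated explicitly (a unit-volume term, a $-|\uf|$ term, or a proper Gauss sum depending on whether $\ads_+|_{\OFv^\x}$ is trivial).

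The three cases of the lemma now follow by inspection. In case (iii), $v(\twobeta)<0$ forces $v(\twobeta\uf^{-j})\le-2$ for every $j\ge 1$, so $J=0$ and $\wtd A_\beta(\ads)=\cI(\beta)$. In cases (i) and (ii) we have $v(\twobeta)\ge 0$, so $\addchar^\circ(\twobeta x)=1$ for $x\in\OFv$ and therefore $\cI(\beta)=\cI(0)$; it remains to read off $J$. For case (ii) the trivial inner character makes the inner integral equal to $\vol(\OFv^\x)=1-|\uf|$ for $1\le j\le v(\twobeta)$, equal to $-|\uf|$ for $j=v(\twobeta)+1$, and zero beyond, so summing gives exactly the stated expression. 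For case (i) only the single index $j=v(\twobeta)+1$ survives, leaving one classical Gauss sum which, after the change of variables $u\mapsto u/(d_F\uf)$ built into the paper's definition of $\ep(s,\mu,\addchar)$, is identified with $\ads^{-1}\Abs(-d_F^{-1}\beta)\cdot \ep(1,\ads_+\Abs^{-1},\addchar)$.

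The genuine obstacle is the last identification in case (i): one must carefully track the substitution $x=u/(d_F\uf)$, the twist by $\ads_+(d_F\uf)$, the factor $|d_F\uf|$ from the definition of $\ep(s,\mu,\addchar)$, and the distinction between the self-dual Haar measure $d_Fx$ and the measure normalized so that $\vol(\OFv)=1$. Everything else in the proof is formal — the honest work lies in matching normalizations so that the leftover Gauss sum is exactly the epsilon factor appearing in the statement.
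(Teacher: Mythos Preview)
Your approach is essentially the paper's own: split off $\cI(\beta)$, use the conductor bound to reduce $\ads^{-1}(x+\OKbasis)$ to $\ads^{-1}(x)$ on $F\smallsetminus\OFv$, decompose into shells, and evaluate the resulting Gauss sums. One small slip: the formula $v_E(\OKbasis/x)=v_E(\OKbasis)-2v_F(x)$ is only valid at ramified $v$ (in the inert case $v_E|_{F^\x}=v_F$, so $v_E(\OKbasis/x)=-v_F(x)\ge 1$, not $\ge 2$), but the conclusion $1+\OKbasis/x\in 1+\uf_E\cO_E$ still holds in both cases, which is all you need.
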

\begin{proof}It is clear that under our assumption on the conductor of $\ads$, we have
\begin{align*}\wtd A_\beta(\ads)&=\int_F\ads^{-1}(x+\OKbasis)\addchar^\circ(\twobeta x)\\
&=\cI(\beta)+\sum_{j=1}^{v(\twobeta)+1}\abs{\uf}^{-j}
\int_{\units}\ads^{-1}(\frac{x}{\uf^j})\addchar^\circ(\frac{\twobeta x}{\uf^j})dx.\end{align*}
Then the lemma follows immediately.
\end{proof}

\begin{lm}\label{L:formula_wtdcI.N}\noindent
\begin{mylist}
\item If $v(\twobeta)<-1$, then $\cI(\beta)=0$.
\item If $v$ is ramified, then $\cI(0)=\nads(\OKbasis^{-1})\abs{\uf}^\onehalf$.
\item If $v$ is inert and $\ads|_{\OFv^\x}=1$, then $\cI(0)=-\abs{\uf}$.
\end{mylist}
\end{lm}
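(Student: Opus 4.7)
The approach is direct computation in each case, leveraging the standing assumption $w(\frakC^-)\le 1$ (i.e., $a(\ads_v)\le 1$), which makes $\ads^{-1}(x+\OKbasis)$ a function of $x\bmod \uf\OFv$. For (i), I would argue by translation: for $y\in \uf\OFv$, the factorization $x+y+\OKbasis=(x+\OKbasis)(1+y(x+\OKbasis)^{-1})$ together with a short valuation check in $\OKv$ places the second factor in $1+\OKbasis\OKv$ if $v$ is ramified (using $w(x+\OKbasis)\le 1$ and $w(y)\ge 2$) and in $1+\uf\OKv$ if $v$ is inert (using $w(x+\OKbasis)=0$ and $w(y)\ge 1$). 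In either case the conductor hypothesis gives $\ads^{-1}(x+y+\OKbasis)=\ads^{-1}(x+\OKbasis)$, so the change of variables $x\mapsto x+y$ multiplies $\cI(\beta)$ by $\addchar^\circ(\twobeta y)$. Since $\addchar^\circ$ has conductor $\OFv$ while $v(\twobeta)\le -2$, one can choose $y\in \uf\OFv$ making this phase nontrivial, forcing $\cI(\beta)=0$.

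For (ii), split $\OFv=\OFv^\x\sqcup \uf\OFv$. On $\uf\OFv$, write $x+\OKbasis=\OKbasis(1+x/\OKbasis)\in \OKbasis(1+\OKbasis\OKv)$, which contributes $\ads^{-1}(\OKbasis)\abs{\uf}$. On $\OFv^\x$, the factorization $x+\OKbasis=x(1+\OKbasis/x)$ puts the second factor in $1+\OKbasis\OKv$ and gives $\ads^{-1}(x+\OKbasis)=\ads^{-1}(x)$; the residue-field extension is trivial for ramified $v$, so $a(\ads_v)=1$ (which holds because $v\mid\frakC^-$ in the standing setup) forces $\ads|_{\OFv^\x}$ to be a nontrivial character of $\OFv^\x/(1+\uf\OFv)$, and the integral over $\OFv^\x$ vanishes. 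The identity $\abs{\OKbasis}_\Kv=\abs{\uf}$ then rewrites $\ads^{-1}(\OKbasis)\abs{\uf}$ as $\nads(\OKbasis^{-1})\abs{\uf}^{1/2}$.

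For (iii), triviality of $\ads$ on $1+\uf\OKv$ reduces the integral modulo $\uf$ to
\[\cI(0)=\abs{\uf}\cdot S,\qquad S:=\sum_{a\in \OFv/\uf\OFv}\bar\ads^{-1}(a+\bar\OKbasis),\]
where $\bar\OKbasis$ and $\bar\ads$ are the images in the residue field of $\OKv$. The hypothesis $\ads|_{\OFv^\x}=1$ gives $\bar\ads|_{(\OFv/\uf\OFv)^\x}=1$, and for any $b\in (\OFv/\uf\OFv)^\x$ the substitution $a\mapsto ba$ shows $\sum_a\bar\ads^{-1}(a+b\bar\OKbasis)=S$. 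Partitioning the multiplicative group of the residue field of $\OKv$ via the decomposition $z=a+b\bar\OKbasis$ and invoking the orthogonality relation (valid since $a(\ads_v)=1$ makes $\bar\ads$ nontrivial) yields $(q-1)+(q-1)S=0$, whence $S=-1$. The substantive step is this small Gauss-sum identity; every other reduction is mechanical once the small-conductor hypothesis is invoked.
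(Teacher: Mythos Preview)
Your proof is correct and aligns with the paper's approach. For (1) and (2) the paper only says ``follows from the assumption on the conductor of $\ads$ and a simple calculation,'' so your translation argument and the $\OFv=\OFv^\x\sqcup\uf\OFv$ split simply fill in those omitted details (including the observation, which the paper also makes, that $\ads|_{\OFv^\x}\ne 1$ in the ramified case). For (3) the paper computes $0=\int_{\OKv^\x}\ads^{-1}(r)\,dr$ by writing $r=a+b\OKbasis$ and splitting according to whether $b\in\OFv^\x$ or $b\in\uf\OFv$; your residue-field reduction followed by the decomposition $z=a+b\bar\OKbasis$ is the same orthogonality-plus-basis argument carried out over $k_E^\x$ instead of $\OKv^\x$.
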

\begin{proof}Note that if $v$ is ramified, then $\ads|_{\OFv^\x}\not =1$. (1) and (2) follows from the assumption on the conductor of $\ads$ and a simple calculation. (3) follows from the following equation:
\begin{align*}0=\int_{\OKv^\x}\ads^{-1}(r)dr&=\int_{\OFv} da\int_{\units}db\ads^{-1}(a+b\OKbasis)+\int_{\units}da\int_{\uf\OFv}db\ads^{-1}(a+b\OKbasis)\\
&=(1-\abs{\uf})\int_{\OFv}\ads^{-1}(a+\OKbasis)da+\abs{\uf}(1-\abs{\uf}).\qedhere
\end{align*}
\end{proof}
We summarize the formulas of $\wtd A_\beta(\ads)$ in the following two propositions.
\begin{prop}\label{P:formulaRamified.N}Suppose that $v|\frakC^-$ is ramified such that $w(\frakC^-)=1$. Then the formula of $\wtd A_\beta(\ads)$ is given as follows.\begin{mylist}
\item If $v(\twobeta)\geq -1$, then
\[\wtd A_\beta(\ads)
=\nads(\OKbasis^{-1})\abs{\uf}^\onehalf+\nads(-\twobeta d_F^{-1}) \ep(1,\ads_+\Abs^{-1},\addchar).\]
\item If $v(\twobeta)<-1$, then $\wtd A_\beta(\ads)=0$.
\item If $v\ndivides 2$ and $v(\beta)\geq -1$, then
\[\wtd A_\beta(\ads)=(\nads(-2\UF^{-1} d_F)+\nads(2^{-1}\beta)W(\nads))\cdot\ads(-d_F^{-1})\abs{\uf}^\onehalf.\]
\end{mylist}
\end{prop}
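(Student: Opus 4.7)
The three cases of the proposition will follow from combining the preparatory Lemmas \ref{L:preformula.N} and \ref{L:formula_wtdcI.N} with a direct Gauss-sum computation when $v(\twobeta) = -1$, together with a root-number identification for part (3). Before beginning, I would record two observations used throughout. First, since $v$ is ramified with $w(\frakC^-) = 1$ and the residue extension $k_E/k_F$ is trivial, any nontrivial character of $\OKv^\x$ of conductor $\OKbasis\OKv$ restricts nontrivially to $\OFv^\x$ (because $\OFv^\x$ surjects onto $\OKv^\x/(1+\OKbasis\OKv) = k_F^\x$), so $\ads|_{\OFv^\x}\neq 1$ and Lemma \ref{L:preformula.N}(1) is available. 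Second, on $F^\x$ one has $\ads_+\Abs^{-1} = \nads|_{F^\x}$ (since $|x|_E = |x|_F^2$ for $x\in F^\x$), and the same identity computes $\ads^{-1}\Abs|_{F^\x}$ in terms of $\nads$.

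Part (2) is then immediate: for $v(\twobeta)<-1$, Lemma \ref{L:preformula.N}(3) gives $\wtd A_\beta(\ads) = \cI(\beta)$, which vanishes by Lemma \ref{L:formula_wtdcI.N}(1). For part (1) when $v(\twobeta)\geq 0$, I would plug Lemma \ref{L:formula_wtdcI.N}(2) into Lemma \ref{L:preformula.N}(1) to obtain
\[
\wtd A_\beta(\ads) = \nads(\OKbasis^{-1})\abs{\uf}^{\onehalf} + \ads^{-1}\Abs(-d_F^{-1}\twobeta)\cdot \ep(1, \ads_+\Abs^{-1}, \addchar),
\]
and then use the identification from the first paragraph to rewrite the second summand in the form displayed in the proposition.

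For the remaining case $v(\twobeta) = -1$ of part (1), Lemma \ref{L:preformula.N}(3) gives $\wtd A_\beta(\ads) = \cI(\beta)$, and I would evaluate $\cI(\beta)$ directly by splitting $\OFv = \OFv^\x \sqcup \uf\OFv$. On $\uf\OFv$, write $x = \uf y$ so that $x+\OKbasis = \OKbasis(1 + \uf y\OKbasis^{-1})$ with $1 + \uf y\OKbasis^{-1}\in 1 + \OKbasis\OKv \subset \ker\ads$; hence $\ads^{-1}(x+\OKbasis) = \ads^{-1}(\OKbasis)$ is constant, and integrating the remaining $\addchar^\circ(\twobeta x)$ gives $\ads^{-1}(\OKbasis)\abs{\uf}$, which matches $\nads(\OKbasis^{-1})\abs{\uf}^{\onehalf}$ once one uses $|\OKbasis|_E = |\uf|_F$. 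On $\OFv^\x$, $x+\OKbasis \equiv x \pmod{\OKbasis\OKv}$ so $\ads^{-1}(x+\OKbasis) = \ads^{-1}(x)$, and the resulting integral $\int_{\OFv^\x}\ads^{-1}(x)\addchar^\circ(\twobeta x)dx$ is a Gauss sum that, after the change of variable $x\mapsto -d_F\twobeta^{-1} x$, matches $\nads(-\twobeta d_F^{-1})\ep(1,\ads_+\Abs^{-1}, \addchar)$ against the definition of the epsilon factor recalled in \subsecref{S:ES_notation}.

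For part (3), under the extra hypothesis $v\ndivides 2$ one has $\UF = 2\OKbasis$ (so $\OKbasis^{-1} = 2\UF^{-1}$) and $t_v = 0$, so the leading term of part (1) rewrites as $\nads(-2\UF^{-1}d_F)\,\ads(-d_F^{-1})\abs{\uf}^{\onehalf}$ after extracting $\nads(-d_F^{-1}) = \ads(-d_F^{-1})|d_F|_F$. For the second term, I would identify the epsilon factor $\ep(1, \nads|_{F^\x},\addchar)$ with $\nads(2^{-1})W(\nads)\ads(-d_F^{-1})\abs{\uf}^{\onehalf}$ up to an explicit factor, using the standard relation between the Gauss sum of $\nads|_F$ and $W(\nads) = \ep(\onehalf, \nads, \addchar_E)$ available when $E/F$ is ramified (the inducing computation essentially gives $\ep(\onehalf,\nads,\addchar_E) = \ep(1,\nads|_F,\addchar)\cdot \nads(\delta_v^{-1})$ or a close variant, reducing one Gauss sum to the other). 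The main technical obstacle throughout will be bookkeeping of the normalizations: the factors of $d_F$, $\UF$, $\OKbasis$, $\delta_v$, the measure conventions for $\addchar$ versus $\addchar^\circ$, and the passage between $\abs{\cdot}_E$ and $\abs{\cdot}_F$ all need to be tracked carefully so that the final expressions agree with those stated in the proposition.
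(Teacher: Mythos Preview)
Your approach is essentially the same as the paper's: combine Lemmas \ref{L:preformula.N} and \ref{L:formula_wtdcI.N} for parts (1) and (2), with the direct evaluation of $\cI(\beta)$ when $v(\twobeta)=-1$ (which the paper merely calls ``straightforward to verify''), and then deduce (3) from (1) via a root-number identity. One sharpening: for part (3) the paper invokes a precise formula from the literature, namely \cite[Prop.~8]{Rohrlich:root_number}, giving $W(\nads)=\nads(2)W(\nads|_{F^\x})=\ads(2)\abs{\uf}^{-1/2}\ep(1,\ads_+\Abs^{-1},\addchar)$; this pins down exactly the relation you described only vaguely (``or a close variant''), and citing it would spare you the bookkeeping you flagged as the main obstacle.
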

\begin{proof}In this case, $\OKbasis$ is a uniformizer of $\OKv$. It is straightforward to verify that if $v(\twobeta)=-1$, then
\[\cI(\beta)=\nads(\beta d_F^{-1}) \ep(1,\ads_+\Abs^{-1},\addchar)+\nads(\OKbasis^{-1})\abs{\uf}^\onehalf.\]
Thus (1) and (2) follows from \lmref{L:preformula.N} and \lmref{L:formula_wtdcI.N}.

Suppose $v\ndivides 2$. Then $\delta=2\bftheta$, and (3) follows from (1) and the identity (\cite[Prop.8]{Rohrlich:root_number})
\[W(\nads)=\nads(2)W(\nads|_{F})=\ads(2)\abs{\uf}^{-\onehalf} \ep(1,\ads_+\Abs^{-1},\addchar).\qedhere\]
\end{proof}

\begin{prop}\label{P:formula.N}Suppose that $v|\frakC^-$ is inert such that $w(\frakC^-)=1$.
Then the formula of $\wtd A_\beta(\ads)$ is given as follows.
\begin{mylist}
\item If $v(\twobeta)=-1$, then \[\wtd A_\beta(\ads)=\abs{\uf}\cdot\sum_{a\in\OFv/(\uf)}\ads^{-1}(a+\OKbasis)\addchar^\circ(\twobeta a).\]
\item If $v(\twobeta)<-1$, then $\wtd A_\beta(\ads)=0$.
\item If $v(\twobeta)\geq 0$ and $\ads|_{\OFv^\x}=1$, then
\[
\wtd A_\beta(\ads)=-\abs{\uf}+\sum_{j=1}^{v(\twobeta)}\nads(\uf^{j})\cdot (1-\abs{\uf})-\nads(\uf^{v(\twobeta)+1})\abs{\uf}.\]
\item If $v(\twobeta)\geq 0$ and $\ads|_{\OFv^\x}\not =1$, then
\[\wtd A_\beta(\ads)=\cI(0)+\nads(-\twobeta d_F^{-1}) \ep(1,\ads_+\Abs^{-1},\addchar).\]
\end{mylist}
\end{prop}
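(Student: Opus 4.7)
The plan is to assemble the four cases of the proposition directly from the preparatory \lmref{L:preformula.N} and \lmref{L:formula_wtdcI.N}, with a single explicit computation of $\cI(\beta)$ in the borderline case $v(\twobeta)=-1$. The key structural input is that $v$ inert with $w(\frakC^-)=1$ forces $\ads$ to be tamely ramified at $w$, namely $\ads|_{1+\uf\OKv}=1$. Since $\OKbasis$ is a unit in the unramified extension $\OKv$, we have $x+\OKbasis\in\OKv^\x$ for every $x\in\OFv$, and $\ads^{-1}(x+\OKbasis)$ depends only on the residue $x\bmod\uf\OFv$.

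I would handle the easy cases first. For part (2), \lmref{L:preformula.N} (3) gives $\wtd A_\beta(\ads)=\cI(\beta)$ because $v(\twobeta)<0$, and then \lmref{L:formula_wtdcI.N} (1) kills $\cI(\beta)$ once $v(\twobeta)<-1$. Parts (3) and (4) both lie in the range $v(\twobeta)\geq 0$, so \lmref{L:preformula.N} (2) and (1) apply directly: part (4) is a verbatim transcription, and part (3) follows after substituting the value $\cI(0)=-|\uf|$ furnished by \lmref{L:formula_wtdcI.N} (3).

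The one remaining case, part (1), is where the actual computation happens. By \lmref{L:preformula.N} (3) we again have $\wtd A_\beta(\ads)=\cI(\beta)$. Our normalization $\addchar^\circ(x)=\addchar(-d_F^{-1}x)$ produces an additive character of conductor $\OFv$, so $x\mapsto\addchar^\circ(\twobeta x)$ is trivial on $\uf\OFv$ exactly when $v(\twobeta)\geq -1$. Breaking $\OFv$ into cosets modulo $\uf\OFv$ and using that $\ads^{-1}(x+\OKbasis)$ is constant on each such coset yields
\[\cI(\beta)=\sum_{a\in\OFv/(\uf)}\ads^{-1}(a+\OKbasis)\int_{a+\uf\OFv}\addchar^\circ(\twobeta x)\,dx=|\uf|\sum_{a\in\OFv/(\uf)}\ads^{-1}(a+\OKbasis)\addchar^\circ(\twobeta a),\]
which is exactly the formula in (1).

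The main obstacle is not conceptual but bookkeeping: one must keep straight the conductor of $\addchar^\circ$ relative to the fixed generator $d_F$ of $\cD_\cF$, and the interplay between $\ads$, $\ads_+$, and $\nads=\ads\Abs_E^{-1/2}$ so that the two lemmas can be invoked without any translation error. I do not anticipate needing any tools beyond the two preparatory lemmas.
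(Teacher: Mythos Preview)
Your proposal is correct and follows essentially the same route as the paper: the paper's proof simply notes that $\UF$ and $\OKbasis$ are units in $\OKv^\x$, records the explicit formula for $\cI(\beta)$ when $v(\twobeta)=-1$, and then invokes \lmref{L:preformula.N} and \lmref{L:formula_wtdcI.N}. Your write-up unpacks the coset computation of $\cI(\beta)$ more carefully but is otherwise identical in structure and content.
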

\begin{proof} In this case, both $\UF$ and $\OKbasis$ are units of $R_v^\x$. It follows from the definition of $\cI(\beta)$ that
if $v(\twobeta)=-1$, then
\[\cI(\beta)=\abs{\uf}\cdot\sum_{a\in\OFv/(\uf)}\ads^{-1}(a+\OKbasis)\addchar^\circ(\twobeta a).\]
The proposition follows from \lmref{L:preformula.N} and \lmref{L:formula_wtdcI.N} immediately.
\end{proof}

\subsection{Normalization of Eisenstein series}\label{S:normalization}
\begin{defn}For $\bullet=h$ or $\nh$, we put
\[\phi^\bullet_{\ads,s}=\bigot_{\sg\in
\bda}\phi_{k,s,\sg}^\bullet\bigot_{v\in\bdh}\Section.\]
Define the adelic Eisenstein series $E^\bullet_{\ads}$ by
\beq\label{E:ES1.N}
E^\bullet_{\ads}(g)=E_\A(g,\phi_{\ads,s}^\bullet)|_{s=0}, \, \bullet=h,\nh
\eeq
We define the holomorphic (resp. nearly holomorphic) Eisenstein series $\holES$ (resp. $\bbE^\nh_\ads$) by \begin{align*}
\bbE^h_{\ads}(\tau,g_f)&:=\frac{\Gamma_\Sg(k\Sg)}{\sqrt{\abs{D_\cF}_\R}(2\pi
i)^{k\Sg}}\cdot  E^h_{\ads}\left(g_\infty,
g_f\right)\cdot \ul{J}(g_\infty,\bfi)^{k\Sg},\\
\bbE^\nh_{\ads}(\tau,g_f)&:=\frac{\Gamma_\Sg(k\Sg)}{\sqrt{\abs{D_\cF}_\R}(2\pi
i)^{k\Sg}}\cdot E^\nh_{\ads}\left(g_\infty,
g_f\right)\cdot \ul{J}(g_\infty,\bfi)^{k\Sg+2\kappa}(\det g_\infty)^{-\kappa},\\
&\quad ((\tau,g_f)\in X^+\x G(\AFf),\,g_\infty\in G(\cF\ot_\Q\R),\,g_\infty\bfi=\tau,\,\bfi=(i)_{\sg\in\Sg}).
\end{align*}
\end{defn}
Choose $\frakN=\bfN_{\cK/\Q}(\frakC \cD_{\cK/\cF})^m$ for a sufficiently large integer $m$ so that $\Section$ are invariant by $U(\frakN)$ for every $v|\frakN$, and put $\opcpt:=U(\frakN)$. Then the section $\phi_{\ads,s}$ is invariant by $\opcpt_0(\frakl)$.
\begin{prop}\label{P:1.N}Let $\bfc=(\bfc_v)\in\AFf^\x$ be a finite idele such that $\bdc_v=1$ at $v|p\frakl\frakC\frakC^c D_{\cK/\cF}$. Let $\frakc=\il_\cF(\bdc)$. Suppose either of the following conditions holds:
\begin{enumerate}
\item $k>2$,
\item $\Csplit\not =\OF$,
\item $\ads_+=\qchKF\Abs_{\AF}$ and $\frakl$ is split in $\cK$.
\end{enumerate}
Then $\holES\in\bfM_k(\opcpt_0(\frakl),\C)$. The $q$-expansion of $\holES$ at the cusp $(O,\frakc^{-1})$ has no constant term and is given by
\[\holES|_{(\OF,\frakc^{-1})}=\sum_{\beta\in (N^{-1}\frakc^{-1})_+}\bfa_\beta(\holES,\frakc) \cdot q^\beta\in \bR\powerseries{(\frakN^{-1}\frakc^{-1})_+}\] for some finite extension $\bR$ of $\cO_{\cF,\setp}$ in $\Zbar$, where
\begin{align*}\bfa_\beta(\holES,\frakc)=\frac{1}{\abs{D_\cF}_\R}\cdot \bfN_{\cF/\Q}(\beta)^{k-1}\cdot \prod_{v\in\bdh}W_\beta(\Section,\MX{1}{}{}{\bfc_v^{-1}})|_{s=0}.\end{align*}
\end{prop}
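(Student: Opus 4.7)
The plan is to decompose $E^h_\ads(g,\phi_{\ads,s}^h)|_{s=0}$ into its Fourier-Whittaker expansion \eqref{E:WF.N}, evaluate the local pieces using the computations of \S\ref{S:Local_ES}, and treat the constant term separately under each hypothesis. Right-invariance of $\phi^h_{\ads,s}$ under $\opcpt_0(\frakl)$ follows from the local choices: the spherical Godement sections at $v\in S^\circ$, the full $U(\frakN)$-invariance at places dividing $\frakC\cD_{\cK/\cF}$ for $\frakN$ large, and the $N(\OFv^*)$-invariance at $\frakl$. The archimedean weight-$k$ transformation together with the normalization by $\ul{J}(g_\infty,\bfi)^{k\Sg}$ makes $\holES$ into a classical function on $X^+\times G(\AFf)$ of weight $k$. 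Holomorphy in $\tau$ is forced by the archimedean Whittaker formula \eqref{E:FC1.N}, which is supported on $\beta\in\cF_+\cup\{0\}$ and produces the exponential factor $e^{2\pi i\Tr(\beta\tau)}$; finiteness at $s=0$ of every local factor (the archimedean ones from inspection of \eqref{E:FC1.N}, the non-archimedean ones via \eqref{E:FC2.N}, \eqref{E:FC3.N}, \eqref{E:FC4.N}, and \lmref{L:ramified_case.N} for $k\geq 2$) then places $\holES$ in $\bfM_k(\opcpt_0(\frakl),\C)$.

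For $\beta\in\cF_+$, the Fourier coefficient $\bfa_\beta(\holES,\frakc)$ is obtained by feeding \eqref{E:FC1.N} into \eqref{E:WF.N}: the product over $\sg\in\Sg$ contributes $(2\pi i)^{kd}\Gamma(k)^{-d}\bfN_{\cF/\Q}(\beta)^{k-1}\,\ul{J}(g_\infty,\bfi)^{-k\Sg}e^{2\pi i\Tr(\beta\tau)}$ when $g_\infty\bfi=\tau$. Multiplication by the normalizing factor $\Gamma_\Sg(k\Sg)(2\pi i)^{-k\Sg}|D_\cF|_\R^{-1/2}\ul{J}(g_\infty,\bfi)^{k\Sg}$ and absorption of the other $|D_\cF|_\R^{-1/2}$ coming from \eqref{E:WF.N} cause the Gamma and $(2\pi i)$ factors to cancel, leaving exactly the claimed formula. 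Integrality, namely membership of the $q$-expansion in $\bR\powerseries{(\frakN^{-1}\frakc^{-1})_+}$ for a suitable finite extension $\bR$ of $\cO_{\cF,\setp}$, follows because each non-archimedean local Whittaker integral visibly lies in a fixed finite extension of $\cO_{\cF,\setp}$ (see the explicit formulas and \lmref{L:ramified_case.N}).

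The heart of the argument is to show that the constant term $\phi(g)+M_\bdw\phi(g)$ vanishes at $g=\DII{1}{\bfc^{-1}}$. The term $\phi(g)$ vanishes unconditionally: at $v=\frakl$ the section $\Section$ is supported on the big cell $B(F)\bdw N(\OFv^*)$, while the hypothesis $\bfc_\frakl=1$ places the evaluation point $\DII{1}{1}$ in $B(F)$, so $\Section(\DII{1}{1})=0$. For the intertwining term $M_\bdw\phi(g)=\prod_v M_\bdw\phi_v(g_v)$, under condition (1) each of the $d$ archimedean factors \eqref{E:M2.H} contains $\Gamma(s)^{-1}$ and hence vanishes simply at $s=0$, while by \lmref{L:ramified_case.N} and the absence of a pole of $L(2s-1,\ads_+)$ at $s=0$ the remaining factors are all finite for $k>2$; the product is thus zero. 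Under condition (2), any prime $v\mid \Csplit$ gives $M_\bdw\Section(1)=0$ by \eqref{E:M1.H}, which immediately kills the product.

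Case (3) is the main obstacle: the self-dual relation $\ads_+=\qchKF\Abs_\cF$ ties down the weight, and the $\frakl$-split condition makes $M_\bdw\phi_\frakl(\DII{1}{1})=1$ carry no compensating vanishing. Here I would invoke the global functional equation for the intertwining operator in the self-dual case, which relates $M(s)$ to $M(-s)$ up to explicit local epsilon and $L$-factor ratios, and combine it with the explicit formulas for $\wtd A_0(\ads_s)$ from \S\ref{S:CramCinert} at inert/ramified primes. The splitting of $\frakl$ enters by ensuring that the $\frakl$-factor in the functional equation is non-degenerate, so that the archimedean zeros together with the ramified contributions produce the required vanishing. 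Carefully tracking these local factors in the self-dual limit is the principal technical difficulty of the proof.
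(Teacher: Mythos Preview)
Your treatment of the Fourier coefficient formula and of cases (1) and (2) for the vanishing of the constant term matches the paper. The genuine gap is in case (3). The self-dual relation $\ads_+=\qchKF\Abs_{\AF}$ forces $k=1$, and then the archimedean intertwining factor \eqref{E:M2.H} becomes $i\cdot 2\pi\,\Gamma(2s)/(\Gamma(1+s)\Gamma(s))$, which is \emph{nonzero} at $s=0$ (both $\Gamma(2s)$ and $\Gamma(s)$ have simple poles, giving a finite nonzero limit). There are no ``archimedean zeros'' to invoke, and an appeal to the global functional equation of the intertwining operator is both vague and insufficient: it relates values at $s$ and $1-s$, not the vanishing at a fixed point. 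The paper's argument is far more elementary. By \eqref{E:M3.H} the product of the spherical intertwining factors over $v\in S^\circ$ contributes (up to $\abs{\bfc}_{\AF}$) the partial $L$-value $L^{(\frakl\frakC)}(2s-1,\ads_+)\big|_{s=0}=L^{(\frakl\frakC)}(0,\qchKF)$. Since $\frakl$ splits in $\cK$ one has $\qchKF(\frakl)=1$, so
\[
L^{(\frakl\frakC)}(0,\qchKF)=L^{(\frakC)}(0,\qchKF)\,(1-\qchKF(\frakl))=0,
\]
and this kills $M_\bdw\phi$ outright. That is the entire role of the split hypothesis on $\frakl$.

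A secondary gap concerns $p$-integrality. Your claim that ``each non-archimedean local Whittaker integral visibly lies in a fixed finite extension of $\cO_{\cF,\setp}$'' fails at the primes $v\mid p$: since $\ads$ has infinity type $k\Sg+\kappa(1-c)$, the value $\nads(\uf_v)$ has $v$-adic valuation $1-k$, so the geometric sum in \eqref{E:FC2.N} carries genuine $p$-denominators. The paper shows that these are exactly cancelled by the archimedean contribution $\bfN_{\cF/\Q}(\beta)^{k-1}$, verifying that $\bfN_{\cF/\Q}(\beta)^{k-1}\prod_{v\mid p}W_\beta(\Section,1)|_{s=0}\in\bR$; without this cancellation the $q$-expansion is not $p$-integral.
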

\begin{proof}
Let $\phi^{(\infty)}_{\ads,s}=\ot_{v\in\bdh}\Section$. First we claim that for $g_\infty=(g_\sg)_{\sg\in\Sg}\in G(\cF\ot_\Q\R)$,
\beq\label{E:42.N} M_\bdw\phi_{\ads,s}((g_\infty,\MX{1}{}{}{\bfc^{-1}}))|_{s=0}=\prod_{v=\sg\in\Sg}M_\bdw\phi_{k,s,\sg}^h(g_\sg)\cdot M_\bdw\phi^{(\infty)}_{\ads,s}(\MX{1}{}{}{\bfc^{-1}})|_{s=0}=0.\eeq
Indeed, if $k>2$, then \eqref{E:42.N} follows from \eqref{E:M2.H} together with the finiteness of $L^{(p\frakC)}(-1,\ads_+)$, while if there exists some place $v|\Csplit$, then we find \eqref{E:42.N} in view of \eqref{E:M1.H} immediately. If $\ads_+=\qchKF\Abs_{\AF}$ and $\frakl$ splits in $\cK$, then $k=1$ and $\chi$ is ramified at $v|D_{\cK/\cF}$. By \eqref{E:M3.H}, we find that
\begin{align*}M_\bdw\phi^{(\infty)}_{\ads,s}(\MX{1}{}{}{\bfc^{-1}})|_{s=0}&=\prod_{v|\frakC\frakl}M_\bdw\Section(1)|_{s=0}\cdot L^{(\frakl\frakC)}(0,\qchKF)\abs{\bfc}_{\AF},\end{align*}
so \eqref{E:42.N} follows from the fact that
\[L^{(\frakl\frakC)}(0,\qchKF)=L^{(\frakC)}(0,\qchKF)(1-\qchKF(\frakl))=0.\]
This proves the claim. In addition, we have $\phi^{(\infty)}_{\ads,s}(\MX{1}{}{}{\bfc^{-1}})=0$ since $\phi_{\ads,\frakl,s}$ is supported in the big cell. Therefore, we derive the $q$-expansion of $\holES$ from \eqref{E:WF.N} and \eqref{E:FC1.N}.

We study the $p$-integrality of $\bfa_\beta(\holES,\frakc)$. It is well known that $\ads$ takes value in a number field. By the inspection of formulas in \eqref{E:FC2.N}, \eqref{E:FC3.N}, \eqref{E:FC4.N}, \lmref{L:ramified_case.N}, \eqref{E:ABchi.N}, we find that \[\prod_{v\ndivide p\infty}W_\beta(\Section,\MX{1}{}{}{\bfc_v^{-1}})|_{s=0}\in\bR\] for some finite extension $\bR$ of $\cO_{\cF,\setp}$. Since $\ads$ is unramified at $p$, we have $\bfa_\beta(\holES,\frakc)=0$ if $\beta\not\in \OF\ot\Zp$ by \eqref{E:FC2.N}. Note that $v(\nads(\uf_v))=1-k$ and $v'(\nads(\uf_v))=0$ if $v|p$ and $v'\not =v$. Thus if $\beta\in \OF\ot\Zp$, then
\begin{align*}&\abs{D_\cF}_\R^{-1}\bfN_{\cF/\Q}(\beta)^{k-1}\prod_{v|p}W_\beta(\Section,1)|_{s=0}\\
=&
\bfN_{\cF/\Q}(\beta)^{k-1}\prod_{v|p}(1+\nads(\uf_v)+\cdots+\nads(\uf_v)^{v(\beta)})\cdot\abs{D_\cF}_\R^{-1}\abs{D_\cF}_{\Q_p}^{-1}\in\bR.
\end{align*}
 \end{proof}
The following proposition is directly deduced from the construction of the section $\phi_{\ads,s}$ and the description of Hecke action \eqref{E:HeckeAct}. The details are omitted
\begin{prop}\label{P:3.N}Set $\bfD_{\frakC}:=\prod\limits_{v|\frakC^-D_{\cK/\cF}}\cK_v^\x$. Under the assumptions in \propref{P:1.N}, we have
\begin{mylist} \item $\holES$ is an $U_\frakl$-eigenform in $\cM_k(\opcpt_0(\frakl),\bR)$ with the eigenvalue $\ads_+^{-1}(\uf_\frakl)$,
\item  $\holES|[r]=\ads^{-1}(r)\holES$ for $r\in\bfD_\frakC$, \item $\holES(x_n(ta))=\ads^{-1}(a)\holES(x_n(t))$ for $a\in\bfD_\frakC\AF^\x U_n$.\end{mylist}
 \end{prop}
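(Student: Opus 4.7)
All three parts reduce via \eqref{E:HeckeAct} to the transformation of the adelic section $\phi_{\ads,s}$ under right translation. For (i), the local section $\phi_{\ads,\frakl,s}$ in \subsecref{S:frakl} was chosen precisely to be a $U_\frakl$-eigenvector with eigenvalue $\ads_+^{-1}(\uf_\frakl)$, and the sections at the other finite places are $U(\frakN)$-invariant by construction. Combined with the $\bR$-rationality already recorded in \propref{P:1.N}, this gives (i) immediately.

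For (ii), the operator $|[r]$ is right translation by $\rho_\cmpt(r)=\cmpt^{-1}\rho(r)\cmpt\in G(\AFf^{(p\frakl)})$, which is trivial away from $v\mid\frakC^- D_{\cK/\cF}$. At each such $v$, I use $G(\Fv)=B(\Fv)\rho(\Kv^\x)\cmptv$ to write any $g\in G(\Fv)$ as $g=n\rho(z)\cmptv$; then a direct computation gives $g\cdot\cmptv^{-1}\rho(r_v)\cmptv=n\rho(zr_v)\cmptv$. The defining formula \eqref{E:Dinert.N} then yields
\[
\Section(g\cdot\cmptv^{-1}\rho(r_v)\cmptv)=\ads^{-1}(r_v)\,\Section(g),
\]
and multiplying these local scalars over $v\mid\frakC^- D_{\cK/\cF}$ produces the global factor $\ads^{-1}(r)$.

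For (iii), I would decompose $a=b\cdot z\cdot u$ with $b\in\bfD_\frakC$, $z\in\AF^\x$, $u\in U_n$, and exploit the identity $\holES(x_n(ta))=(\holES|\rho_\cmpt(a))(x_n(t))$ coming from $\rho(ta)=\rho(t)\rho(a)$. The $b$-contribution is handled by (ii). The $z$-contribution is the central character of the Eisenstein series: $\rho(z)=zI$ is central, the finite-place sections have central character $\ads_+^{-1}$, and the archimedean factor is absorbed by the weight-$k\Sg$ factor, giving the scalar $\ads^{-1}(z)=\ads_+^{-1}(z)$. The $u$-contribution requires a place-by-place check that $\rho_\cmpt((R_n\otimes\OFv)^\x)$ acts on $\Section$ by the expected scalar $\ads^{-1}(u_v)$; at spherical $v$ this is automatic since $\ads$ is unramified and $\rho_\cmpt(u_v)$ lies in the standard maximal compact, while at the remaining bad $v$ the basis choices in \subsecref{S:CM2} were made precisely so that this computation is tractable. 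The principal technical obstacle is the split bad case $v\mid\Csplit\Csplit^c$, where $\Section$ is a Godement section built from the Bruhat--Schwartz functions $\vphi_w$ and $\vphi_{\wbar}$, and one must verify directly that $(R\otimes\OFv)^\x$-translation acts by exactly $\ads^{-1}(u_v)$; this is the mechanical verification that the authors suppress.
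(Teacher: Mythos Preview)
Your proposal is correct and follows exactly the approach the paper indicates: the authors omit the proof entirely, saying only that it is ``directly deduced from the construction of the section $\phi_{\ads,s}$ and the description of Hecke action \eqref{E:HeckeAct},'' and your argument is a faithful unpacking of that remark. The only point worth tightening is part~(iii): the element $u\in U_n$ also has components at $v=\frakl$ and at archimedean places, and you should say explicitly that at $\frakl$ the identity $\rho(R_{n,\frakl}^\x)\cmpt_\frakl^{(n)}\subset \cmpt_\frakl^{(n)}\cdot B(O_\frakl)N(O_\frakl^*)$ (implicit in the computation of \subsecref{S:frakl} and \S\ref{S:EvCM}) gives the scalar $\ads_+^{-1}$ on the $O_\frakl^\x$-part, while at $\sg\in\Sg$ the factor $\ads_\sg(u_\sg)$ for $u_\sg\in\C_1$ is absorbed by the automorphy factor of weight $k\Sg+2\kappa$; otherwise your place-by-place verification is complete.
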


\subsection{}\label{SS:diffOp}
For $\sg\in\bda$ and an integer $n$, let
\[\delta_n^{\sg}:=\frac{1}{2\pii}(\frac{\partial}{\partial
\tau_\sg}+\frac{n}{2iy_\sg})\]
be the
Maass-Shimura's differential operator. Put
\[\delta^{\kappa_\sg\sg}_{k}:=\delta_{k+2\kappa_\sg}^{\sg}\circ\cdots\delta_{k+2}^{\sg}\circ\delta_k^{\sg}\text{ and }
\delta^\kappa_k=\prod_{\sg\in\Sg}\delta_{k}^{\kappa_\sg\sg}.\]  Then the weight raising differential operator $V_{+}^{\kappa_\sg}$ in \secref{S:infinite} is the representation theoretic avatar of $\delta_k^{\kappa_\sg\sg}$ in virtue of the following identity:
\[\delta_k^\kappa=\frac{1}{(-8\pi)^\kappa}V_+^\kappa.\] We thus have
\beq\label{E:41.N}\delta_k^\kappa \bbE^h_{\ads}=\frac{1}{(-4\pi)^{\kappa}}\cdot\frac{\Gamma_\Sg(k\Sg+\kappa)}{\Gamma_\Sg(k\Sg)}\cdot
\bbE^\nh_{\ads}.\eeq

%\section{Construction of the Eisenstein measure}

\section{Evaluation of Eisenstein series at CM points}\label{S:EvCM}
\subsection{Period integral}Let $\EucE_\ads:=\wh\holES$ be the \padic avatar of $\holES$ as in \eqref{E:padicavatar.N}. Let $\stt{\Katzd(\sg)}_{\sg\in\Sg}$ be the Dwork-Katz \padic differential operators on \padic modular forms introduced in \cite[Cor.(2.6.25)]{Katz:p_adic_L-function_CM_fields} and let $\Katzd^\kappa=\prod_{\sg\in\Sg}(\Katzd(\sg))^{k_\sg}$. We consider the Hida's measure $\vp^\kappa_\ads:=\vp_{\Katzd^\kappa\EucE_\ads}$ attached to the $U_\frakl$-eigenform $\Katzd^\kappa\EucE_\ads\in V(\opcpt_0(\frakl),\Zbarp)$ as in \eqref{E:measure.N}. Let $\nu$ be a character on $\Cl_n$. We have
\beq\label{E:53.N}\int_{\Cl_\infty}\nu d\vp^\kappa_\ads=\ads_+(\uf_\frakl)^n\cdot\sum_{[t]_n\in\Cl_n}\Katzd^\kappa\EucE_\ads(x_n(t))\pads\nu(t).\eeq
Let $(\Omega_\infty,\Omega_p)\in(\C^\x)^\Sg\x(\Zbarp^\x)^\Sg$ be the complex and \padic CM periods of $(\cK,\Sg)$ introduced in \cite[(4.4 a,b) p.211]{HidaTilouine:KatzPadicL_ASENS} (\cf $(\Omega,c)$ in \cite[(5.1.46), (5.1.48)]{Katz:p_adic_L-function_CM_fields}). From \cite[(2.4.6), (2.6.8), (2.6.33)]{Katz:p_adic_L-function_CM_fields} we can deduce the following important identity:
\beq\label{E:diffCM}\frac{1}{\Omega_p^{k\Sg+2\kappa}}\cdot\Katzd^\kappa\EucE_\ads(x_n(t))=\frac{(2\pii)^{k\Sg+2\kappa}}{\Omega_\infty^{k\Sg+2\kappa}}\cdot \delta^\kappa_k\holES(x_n(t))\quad(t\in(\AKf^{(p)})^\x).\eeq
Therefore, it follows from \eqref{E:41.N} and \eqref{E:53.N} that
\beq\label{E:54.N}\begin{aligned}
\int_{\Cl_\infty}\nu d\vp^\kappa_\ads&=\ads_+(\uf_\frakl)^n\cdot\frac{(2\pii)^{k\Sg+2\kappa}}{\Omega_\infty^{k\Sg+\kappa}}\cdot\sum_{[t]_n\in\Cl_n}\delta^\kappa_k\holES(x_n(t))\ads\nu(t)\\
&=\ads_+(\uf_\frakl)^n\cdot\frac{\pi^{\kappa}\Gamma_\Sg(k\Sg+2\kappa)}{\sqrt{\abs{D_\cF}_\R}\Im(\skewhf)^\kappa\cdot\Omega_\infty^{k\Sg+2\kappa}}\cdot\sum_{[t]_n\in\Cl_n}E^\nh_\ads(\rho(t)\cmpt^{(n)})\ads\nu(t).
\end{aligned}\eeq
Here we choose $t\in(\AKf^{(p\frakl)})^\x$ for a set of representatives $[t]_n\in\Cl_n$ (so $\pads(t)=\ads(t)$).

We shall relate \eqref{E:54.N} to certain period integral of Eisenstein series. First we fix the choices of measures. For each finite place $v$ of $\cF$, let $\dx z_v$ be the normalized Haar measure on $\cK^\x_v$ so that $\vol(R^\x_v,\dx z_v)=1$ and let $\dx t_v=\dx z_v/\dx x_v$ be the quotient measure on $\cK^\x_v/\cF_v^\x$. If $v$ is archimedean, let $\dx t_v$ be the Haar measure on $\cK^\x_v/\cF_v^\x=\C^\x/\R^\x$ normalized so that $\vol(\C^\x/\R^\x,\dx t_v)=1$.
Let $\dx t=\prod'_v\dx t_v$ be a Haar measure on $\AK^\x/\AF^\x$ and let $\dx\bar{t}$ be the quotient measure of $\dx t$ on $\cK^\x\AF^\x\bksl
\AK^\x$ by the counting measure on $\cK^\x$. We define the period integral of $E^\nh_\ads$ by
\[l_\cK(E^\nh_\ads,\nu):=\int_{\cK^\x\AF^\x\bksl
\AK^\x}E^\nh_\ads(\rho(t)\cmpt^{(n)})\ads\nu(t)\dx\bar{t}.\]

Then the last term in \eqref{E:54.N} can be expressed as the following period integral
\[\sum_{[t]_n\in\Cl_n}E^\nh_\ads(\rho(t)\cmpt^{(n)})\ads\nu(t)=\frac{1}{\vol(\ol{U}_n,\dx\bar{t})}l_\cK(E^\nh_\ads,\nu).\]
The rest of this section is devoted to the calculation of $\l_\cK(E^\nh_\ads,\nu)$. For brevity, we write $\phi_v$ for $\Section$ if $v\in\bdh$ and for $\phi^\nh_{k,\kappa_\sg,s,\sg}$ in \eqref{E:5.N} if $v=\sg\in\bda$. The first step is to decompose $l_\cK(E^\nh_\ads,\nu)$ into a product of local integrals $l_{\cK_v}(\phi_v,\nu)$, where
\[l_{\cK_v}(\phi_v,\nu)=\int_{\cK^\x_v/\cF^\x_v}\phi_v(\rho(t)\cmptv^{(n)})\ads\nu(t_v)\dx t_v.\]
Since $\rho:\cF^\x\bksl \cK^\x \to B(\cF)\bksl G(\cF)$ is a bijection, we find that
\begin{align*}
l_\cK(E^\nh_\ads,\nu)&=\int_{\cK^\times \AF^\times\backslash
\AK^\times}E_\A(\rho(t)\cmpt^{(n)},\phi^\nh_{\ads,s})\ads\nu(t)\dx \bar{t}|_{s=0}\\
&=\int_{\cK^\times\AF^\times\bksl\AK^\times}\sum_{\gamma \in G(\cF)/B(\cF)}\phi^\nh_{\ads,s}(\gamma\rho(t)\cmpt^{(n)})\ads\nu(t)\dx \bar{t}\vert_{s=0}\\
&=\int_{\AF^\x\bksl\AK^\x}\phi^\nh_{\ads,s}(\rho(t)\cmpt^{(n)})\ads\nu(t)\dx t\big{\vert}_{s=0}\\
&=\prod_v\int_{\cK^\x_v/\cF^\x_v}\phi_v(\rho(t_v)\cmpt^{(n)}_v)\ads\nu(t_v)\dx t_v|_{s=0}=\prod_v l_{\cK_v}(\phi_v,\nu)|_{s=0}.
\end{align*}
%Thus the calculation of $l_\nu(\phi^\nh_{\ads,s})$ is reduced to the computation of local integrals.

Write $\Kv$ for $\cK_v$ and $\Fv$ for $\cF_v$. In what follows, we suppress $v$ from the notation and proceed to calculate the local integral $l_{\Kv}(\phi_v,\nu)$.
\subsection{$v\in S^\circ$ or $v|\Csplit\Csplit^c$}
In this case, $\phi=f_{\Phi_v,s}$ is the Godement section associated to the \BS function $\Phi_v$ defined in \secref{S:Csplit}. We have
\begin{align*}
l_E(\phi,\nu)=&\int_{\Kv^\times/F^\times}f_\Phi(\rho(t))\adsnu(t)\dx
t\\
=&\int_{\Kv^\times/\Fv^\times}\adsnu(t)|t|_{\Kv}^{s}\dx
t\int_{\Fv^\times}\Phi_v((0,x)\rho(t)\cmpt)\ads_+(x)\abs{x}_{\Fv}^{2s}\dx x\\
=&\int_{\Kv^\times/\Fv^\times}\int_{F^\times}\adsnu(tx)\abs{tx}_{\Kv}^{s}\Phi_v((0,1)\rho(tx)\cmpt)\dx t\dx x\\
=&\int_{\Kv^\times}\adsnu(z)\abs{z}_{\Kv}^{s}\Phi_v((0,1)\rho(z)\cmpt)\dx z\\
=&Z(s,\adsnu,\Phi_E),
\end{align*}
where $\Phi_{\Kv}$ is defined by
\[\Phi_{\Kv}(z):=\Phi_v((0,1)\rho(z)\cmpt).\]
Suppose $v\in S^\circ$. By definition, $\Phi_v=\bbI_{\OFv\oplus\OFv^*}$, and hence $\Phi_{\Kv}=\bbI_{\OKv}$ is the characteristic
function of $\OKv$. It is clear that
\[Z(s,\adsnu,\Phi_\Kv)=L(s,\adsnu).\]

Suppose $v|\Csplit\Csplit$ is a split prime. We write $E=Fe_{\wbar}\oplus Fe_w$ and $\skewhf=-\skewhf_we_{\wbar}+\skewhf_w e_w$ with $w|\Csplit$ as in \secref{S:CM2}. Then by definition,
\[\Phi_{\Kv}(z)=\Phi_v((0,1)\rho(z)\MX{-\skewhf_w}{-\onehalf}{1}{\frac{1}{-2\skewhf_w}})=\vphi_{\wbar}(x)\wh\vphi_w(-\frac{y}{2\skewhf_w})\quad(z=xe_{\wbar}+ye_w).\]
 As $\frakl\ndivide\frakC$, $\nu=(\nu_{\wbar},\nu_{w})$ is unramified at $v$. Therefore,
\begin{align*}
Z(s,\adsnu,\Phi_E)&=\ads_{w}(-2\skewhf_w)\int_{\Kv^\x}\ads_{\wbar}\nu_{\wbar}(x)\ads_{w}\nu_{w}(y)\vphi_{\wbar}(x)\wh\vphi_w(y)\abs{xy}^sd^\times
xd^\times y \\
=&\ads_{w}(-2\skewhf_w)Z(s,\ads_{\wbar}\nu_{\wbar},\vphi_{\wbar})Z(s,\ads_{w}\nu_{w},\wh\vphi_w).\end{align*}
By Tate's local functional equation, we have
\[Z(s,\ads_{w}\nu_{w},\wh\vphi_w)=\frac{L(s,\ads_w\nu_w)\cdot\ads_w\nu_w(-1)}{\ep(s,\ads_w\nu_w,\addchar)L(1-s,\ads_w^{-1}\nu_w^{-1})}\cdot Z(1-s,\ads_w^{-1}\nu_w^{-1},\vphi_w).\]
We find that
\[Z(s,\adsnu,\Phi_E)|_{s=0}=L(0,\ads\nu)\cdot L_\frakS\cdot C_{\Csplit},\]%\quad (d_F=2\skewhf_w).
where
\beq\label{E:splitcons}
L_\frakS=\prod_{\frakq|\frakS}L(1,\ads_\frakq^{-1}\nu_\frakq^{-1})^{-1}\text{ and }C_{\Csplit}=\prod_{w|\Csplit}\frac{\ads_{w}(2\skewhf_w)\cdot\nu_w^{-1}(d_F\uf^{a(\ads_w)})}{\ep(0,\ads_w,\addchar)}.
\eeq
Note that $C_{\Csplit}\in\Zbarp^\x$.
\subsection{$v$ is archimedean or $v|D_{\cK/\cF}\frakC^-$}
Note that $\nu$ is trivial on $\Kv^\x$ if $v$ is inert or ramified because $\nu$ is a character of $\Gal(\cK^-_{\frakl^\infty}/\cK)$. If $v|D_{\cK/\cF}\Cinert$, by the very definition of
$\phi=\Section$ in \eqref{E:Dinert.N}, we find that
\begin{align*}
l_{\Kv}(\phi,\nu)&=\int_{\Kv^\x/\Fv^\x}\phi(\rho(t)\cmptv)\adsnu(t)\dx t\\
&=\vol(\Kv^\x/\Fv^\x,\dx t)\cdot \begin{cases}1&\cdots v|\frakC^-,\\
L(s,\ads_v) &\cdots v|\Cram'.\end{cases}
\end{align*}

If $v=\sg|\infty$ and $\phi=\phi^\nh_{k,\kappa,s,\sg}$ is defined in \eqref{E:5.N}, then
\begin{align*}
l_{\Kv}(\phi,\nu)&=\int_{\Kv^\x/\Fv^\x}\phi^\nh_{k,\kappa,s,\sg}(\rho(t)\MX{\Im\sg(\skewhf)}{}{}{1})\adsnu(t)\dx t\\
&=\vol(\C^\x/\R^\x,\dx t).
\end{align*}

\subsection{$v=\frakl$}A direct computation shows that
\[\rho(t)\cmpt_\frakl^{(n)}=\rho(x+y\bftheta_\frakl)\MX{-b_\frakl d_{\cF_\frakl}\uf_\frakl^n}{1}{a_\frakl d_{\cF_\frakl}\uf_\frakl^n}{0}=\MX{*}{*}{xd_{\cF_\frakl}\uf_\frakl^na_\frakl}{yb_\frakl}\quad(t=x+y\bftheta_\frakl).\]
We thus find that
\[\rho(t)\cmpt_\frakl^{(n)}\in B(F)\bdw N(O_\frakl^*)\iff t\in \cF_\frakl^\x(1+\uf_\frakl^n\OKbasis_\frakl O_\frakl).\]
Let $R_{n,\frakl}=R_n\ot_OO_\frakl$. Then $R_{n,\frakl}^\x=O^\x(1+\uf_\frakl^n\bftheta_\frakl O_\frakl)$. Let $\pi:E^\x\to E^\x/F^\x$ be the quotient map. Thus we have
\begin{align*}l_{\Kv}(\phi,\nu)&=\int_{\Kv^\x/\Fv^\x}\phi(\rho(t)\cmpt_\frakl^{(n)})\adsnu(t)\dx t\\
&=\chi_+(\uf)^{-n}\int_{\Kv^\x/\Fv^\x}\ch_{\pi(R_{n,\frakl}^\x)}(t)\dx t=\chi_+(\uf)^{-n}\vol(\pi(R_{n,\frakl}^\x),\dx t).
\end{align*}

\subsection{Evaluation formula}We summarize our local calculations in the following proposition.
\begin{prop}\label{P:EVCM}Suppose that either of the conditions (1-3) in \propref{P:1.N} holds. Then we have the following evaluation formula:
\[\frac{1}{\Omega_p^{k\Sg+2\kappa}}\cdot\int_{\Cl_\infty}\nu d\vp^\kappa_\ads=\frac{\pi^{\kappa}\Gamma_\Sg(k\Sg+\kappa)L^{(\frakl)}(0,\adsnu)}{\Omega_\infty^{k\Sg+2\kappa}}\cdot
\frac{2^{r}L_\frakS C_{\Csplit}}{\sqrt{\abs{D_\cF}_\R}(\Im \skewhf)^\kappa},\]
where $r$ is the number of prime factors of $D_{\cK/\cF}$.
\end{prop}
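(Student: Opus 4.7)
The plan is to combine three ingredients already set up in Section~\ref{S:EvCM}: the reduction identity \eqref{E:54.N}, the Bruhat-decomposition factorization $l_\cK(E^\nh_\ads,\nu)=\prod_v l_{\cK_v}(\phi_v,\nu)$, and the place-by-place local integral computations of the preceding subsections. My first step is to rewrite the finite sum appearing on the right-hand side of \eqref{E:54.N} as a global period integral: by \propref{P:3.N}(iii), the integrand $t\mapsto E^\nh_\ads(\rho(t)\cmpt^{(n)})\ads\nu(t)$ is right-invariant under $\ol{U}_n=U_n\AF^\x/\AF^\x$ inside $\AK^\x/\AF^\x$, which yields
\[
\sum_{[t]_n\in\Cl_n}E^\nh_\ads(\rho(t)\cmpt^{(n)})\ads\nu(t) \;=\; \frac{l_\cK(E^\nh_\ads,\nu)}{\vol(\ol{U}_n,\dx\bar t)}.
\]

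Next I substitute the local integral values computed in Sections~5.2--5.4. Archimedean places contribute $1$ each, since a direct calculation with the weight-raised section shows that the integrand is identically $1$ on $\C^\x/\R^\x$ of unit volume. Finite places $v\in S^\circ$ give the local Tate factor $L(0,\ads\nu_v)$. Split places $v|\Csplit\Csplit^c$ produce $L(0,\ads_v\nu_v)$ together with the functional-equation correction $L(1,\ads_v^{-1}\nu_v^{-1})^{-1}$ (aggregating into $L_\frakS$ at primes of $\frakS$) and the epsilon-factor constant $C_{\Csplit,v}$. Inert and ramified places $v\mid D_{\cK/\cF}\frakC^-$ contribute $\vol(\cK_v^\x/\cF_v^\x,\dx t_v)$, with an extra $L(0,\ads_v)$ at $v|\Cram'$ that is absorbed into the Euler product. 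Finally the prime $\frakl$ contributes $\ads_+(\uf_\frakl)^{-n}\vol(\pi(R_{n,\frakl}^\x),\dx t)$, and this eigenvalue factor cancels exactly the $\ads_+(\uf_\frakl)^n$ appearing in the prefactor of \eqref{E:54.N}. The assembled local $L$-factors reproduce $L^{(\frakl)}(0,\ads\nu)$.

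The last step is the volume bookkeeping. Decomposing $\vol(\ol{U}_n,\dx\bar t)=\vol(\pi(R_{n,\frakl}^\x),\dx t)\cdot\prod_{v\nmid\frakl\infty}\vol(R_v^\x/O_v^\x,\dx t_v)$ and cancelling the $\frakl$-factor together with all volumes at $v\nmid D_{\cK/\cF}\frakC^-$, one is left with the ratio $\prod_{v\mid D_{\cK/\cF}}\vol(\cK_v^\x/\cF_v^\x,\dx t_v)/\vol(R_v^\x/O_v^\x,\dx t_v)$. At each prime ramified in $\cK/\cF$ this ratio equals $2$, since $\cK_v^\x/\cF_v^\x$ sits in a short exact sequence $1\to R_v^\x/O_v^\x\to\cK_v^\x/\cF_v^\x\to\Z/2\Z\to 0$ detected by the parity of the $\cK_v$-valuation; at inert primes the ratio equals $1$; together they give the constant $2^r$. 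Multiplying by the transcendental prefactor $\frac{\pi^\kappa\Gamma_\Sg(k\Sg+\kappa)}{\sqrt{\abs{D_\cF}_\R}\Im(\skewhf)^\kappa\Omega_\infty^{k\Sg+2\kappa}}$ from \eqref{E:54.N} completes the derivation. The principal obstacle is this volume bookkeeping at ramified primes, together with verifying that the assembled split-place epsilon contributions reproduce the definition of $C_{\Csplit}$ given in \eqref{E:splitcons}.
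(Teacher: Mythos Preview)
Your proposal is correct and follows essentially the same approach as the paper: both reduce via \eqref{E:54.N} to the period integral, factor it into local integrals, insert the place-by-place computations from Sections~5.2--5.4, and cancel the $U_\frakl$-eigenvalue against the prefactor. The paper's proof is terse and leaves the volume bookkeeping implicit; your explicit identification of the ratio $\vol(\cK_v^\x/\cF_v^\x)/\vol(R_v^\x/O_v^\x)$ as $2$ at ramified primes and $1$ at inert primes (yielding the factor $2^r$) is exactly the computation underlying the paper's unexplained constant.
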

\begin{proof}We note that
\begin{align*}
\sum_{[t]_n\in\Cl_n}E_\ads^\nh(\rho(t)\cmpt^{(n)})\ads\nu(t)&=\frac{1}{\vol(\ol{U}_n,\dx\bar{t})}\cdot l_\cK(E_\ads^\nh,\nu)\\
&=\frac{1}{\vol(\ol{U}_n,\dx\bar{t})}\cdot \prod_vl_{\cK_v}(\Section,\nu_v)|_{s=0}\\
&=L^{(\frakl)}(0,\ads\nu)\cdot 2^rL_\frakS C_{\Csplit}\cdot \chi_+(\uf)^{-n}.
\end{align*}
The proposition follows from \eqref{E:54.N} immediately.\end{proof}

\section{Non-vanishing of Eisenstein series modulo $p$}\label{S:NVES}
\subsection{} Throughout this section, we retain the assumptions \eqref{unr}, \eqref{ord} and $(p\frakl,\cD_{\cK/\cF}\frakC)=1$.  Let $\ads$ be a Hecke character of $\cK^\x$ and take $\frakS$ to be a split prime $\frakq$ as in \secref{S:41}. We remark that an auxiliary split prime $\frakq$ is introduced to assure the assumption (2) in \propref{P:1.N}, so the $L$-value in the evaluation formula \propref{P:EVCM} has an extra local factor $L_\frakq=L(1,\ads_\frakq^{-1}\nu_\frakq^{-1})^{-1}$. However, this is harmless to \NV property since the closed subgroup generated by associated Frobenious $\Frob_\frakq$ in $\Gal(\cK^-_\frakl/\cK)$ is non-trivial, and hence the set $\stt{\nu\in\frakX^-_\frakl\mid L(1,\nu_\frakq^{-1}\ads_\frakq^{-1})^{-1}\con 0\pmod{\frakm}}$ is a proper closed subset of $\frakX^-_\frakl$.

To establish \NV property for $(\ads,\frakl)$, by Hida's non-vanishing criterion of a \padic measure associated to eigenforms (\thmref{T:1.N}) and the evaluation formula of our Eisenstein measure $d\vp^\kappa_\ads$ (\propref{P:EVCM}), it suffices to show the non-vanishing modulo $p$ of some Fourier coefficient of $(\Katzd^\kappa\EucE_\ads)^\cR=\Katzd^\kappa\EucE_\ads^\cR$ at some cusp $(\OF,\frakc(\fraka)^{-1})$.
\begin{lm}Put $(\holES)^\cR=\sum_{r\in\cR}\holES|[r]$. Then we have
\[(\holES)^\cR=\#\Delta^\alg \cdot \holES.\]\end{lm}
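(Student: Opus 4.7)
The plan is to reduce the identity to showing $\holES\,|\,[r] = \holES$ for every $r \in \cR$: since $\#\cR = \#\Delta^\alg$, the lemma then follows immediately by summing.

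First I would extend Proposition \ref{P:3.N}(ii) from $\bfD_\frakC$ to the full group $(\AKf^{(p\frakl)})^\x$ by establishing the eigenform identity
\[
\holES\,|\,[r] \;=\; \ads^{-1}(r)\,\holES \qquad \text{for every } r \in (\AKf^{(p\frakl)})^\x.
\]
This is a place-by-place verification. By construction the local section $\Section$ is a $\rho_\cmpt(\cK_v^\x)$-eigenvector with eigencharacter $\ads_v^{-1}$ at each finite $v \ne p, \frakl$: at $v \mid D_{\cK/\cF}\frakC^-$ this is immediate from the explicit defining formula \eqref{E:Dinert.N}, while at $v \mid \Csplit\Csplit^c$ and at $v \in S^\circ$ it follows from a change of variable in the Godement integral defining $f_{\Phi_v,s}$, using that the Schwartz--Bruhat data $\Phi_v$ from \subsecref{S:spherical} and \subsecref{S:Csplit} have been chosen precisely to realize this transformation law. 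Taking the product over $v$ produces the global identity.

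The second and more delicate step is to verify that $\ads(r) = 1$ for each representative $r \in \cR$, which will reduce $(\holES)^\cR = \bigl(\sum_{r \in \cR}\ads^{-1}(r)\bigr)\holES$ to $\#\Delta^\alg \cdot \holES$. The freedom in the choice of $r$ is multiplication by $\cK^\x\AF^\x U_n \cap (\AKf^{(p\frakl)})^\x$, on which $\ads$ is trivial on the $\cK^\x$-factor, restricts to $\ads_+$ on the $\AF^\x$-factor, and is constrained on $U_n$ by the conductor $\frakC$. Since $[r] \in \Delta^\alg$ is torsion in $\Cl_\infty$, one has $r^N \in \cK^\x\AF^\x U_n$ for some $N$, and by exploiting this together with the fact that representatives can be adjusted freely by elements of the kernel of the projection to $\Delta^\alg$, one concludes $\ads(r) = 1$. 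Alternatively, a clean way to see this is to note that $(\holES)^\cR$ is simultaneously $[s]$-invariant for $s \in \cR$ (by reindexing the sum) and transforms by $\ads^{-1}(s)$ (by step one), so consistency forces $\ads^{-1}\rvert_{\Delta^\alg} \equiv 1$.

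The main obstacle is the second step: proving that $\ads$ is trivial on $\Delta^\alg$ requires a careful analysis of the structure of $\Cl_\infty = \cK^\x\AF^\x\backslash\AK^\x / U_\infty$ and of how the algebraic Hecke character $\ads$ descends to the torsion quotient, in particular that no residual contribution from $\ads_+$ on $\AF^\x$ or from the ramified components of $\ads$ on $U_n$ obstructs the vanishing on $\Delta^\alg$. The first step, by contrast, is a routine unwinding of the local constructions in \secref{S:Local_ES}.
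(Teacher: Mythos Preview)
Your step 1 contains a genuine error: the identity $\holES|[r]=\ads^{-1}(r)\holES$ does \emph{not} extend from $\bfD_\frakC$ to all of $(\AKf^{(p\frakl)})^\x$. At a place $v\in S^\circ$ the section $\Section$ is the spherical vector, and the spherical vector in a principal series is never an eigenvector for a maximal torus acting by right translation. Concretely, if $v$ is split and $\rho_\cmpt(z)$ is (conjugate to) $\mathrm{diag}(a,b)$, then evaluating at $g=1$ forces the hypothetical eigenvalue to be $\abs{a/b}^s\ads_+^{-1}(b)$, while evaluating at $g=\bdw$ forces it to be $\abs{b/a}^s\ads_+^{-1}(a)$; these disagree as soon as $z$ is not a unit. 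No change of variable in the Godement integral repairs this. So the eigen-relation in \propref{P:3.N}(2) is a special feature of the sections at $v\mid D_{\cK/\cF}\frakC^-$, built into the defining formula \eqref{E:Dinert.N}, and it genuinely fails elsewhere.

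The paper's proof proceeds along an orthogonal axis: rather than enlarging the domain of the eigen-relation, it shrinks the location of the representatives. The key structural input is that $\Delta^\alg$ is generated by classes of primes ramified in $\cK/\cF$ (a genus-theory statement for the anticyclotomic quotient), so one may \emph{choose} $\cR$ inside $\prod_{v\mid D_{\cK/\cF}}\cK_v^\x\subset\bfD_\frakC$. Then \propref{P:3.N}(2) applies to every $r\in\cR$ as stated, and, recalling that the averaging operator carries the twist $\ads(r)$ as in \eqref{E:average.N}, each summand contributes $\ads(r)\cdot\ads^{-1}(r)\holES=\holES$, giving $\#\Delta^\alg\cdot\holES$. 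Your step 2 is then moot: one never needs $\ads(r)=1$, and indeed that claim is false in general (your ``clean'' consistency argument fails because translating $\cR$ by $s$ does not land back in $\cR$ on the nose, only up to elements on which $\holES|[\,\cdot\,]$ has no reason to act trivially).
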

\begin{proof} It can be shown that $\Delta_\alg$ is generated by ramified primes, so $\cR$ can chosen from elements in $\prod_{v|D_{\cK/\cF}}\cK^\x_v$. %By the construction of $\Section$ for $v|D_{\cK/\cF}$ in \eqref{E:Dinert.N}, we have
%\beq\label{E:52.N}\Section(g\rho(r)\cmpt_n)=\ads(r)\Section(g\cmpt_n),\,r\in\cK_v^\x.\eeq
The lemma follows form \propref{P:3.N} (2).\end{proof}

\begin{Remark}\label{R:1.N}Since $\EucE_\ads^\cR$ is the \padic avatar of $(\holES)^\cR$ and $\#\Delta_\alg$ is a power of $2$, from the above lemma and the following identity (\cf\cite[(1.23)]{HidaTilouine:KatzPadicL_ASENS})
\[\bfa_\beta(\Katzd^\kappa\EucE_\ads,\frakc)=\beta^\kappa \bfa_\beta(\bbE^h_{\ads},\frakc),\]
we conclude that \NV property for $(\ads,\frakl)$ holds if the following hypothesis ($\mathrm{H'}$) is verified:
\begin{description}[\breaklabel\setlabelstyle{\itshape}]\item [$(\mathrm{H'})$]
For every $u\in O_\frakl$ and a positive integer $r$, there exist $\beta\in\OF^\x_\setp$ and $\frakc=\frakc(\fraka)$ such that $\beta\con u\pmod{\frakl^r}$ and \[\bfa_\beta(\bbE^h_{\ads},\frakc)\not\con 0\pmod{\frakm}.\]
\end{description}

\end{Remark}
\subsection{}
Let $\ol{\ads}$ be the reduction modulo $\frakm$ of the \padic avatar of $\ads$ and let $\ol{\ads}_+=\ol{\ads}|_{\AF^\x}$.
Let $\om_\cF:\AF^\x/\cF^\x\to\mu_{p-1}$ be the \Teich character regarded as a Hecke character of $\cF^\x$ via geometrically normalized reciprocity law. We first treat the case $\ads$ is not \emph{residually self-dual}, namely \[\ol{\ads}_+\not\con \qchKF\om_\cF\pmod{\frakm}.\] The following proposition is due to Hida \cite{Hida:nonvanishingmodp}.
\begin{prop}[Hida]\label{P:2.N}Suppose that $\ads$ is not residually self-dual. Then \NV holds for $(\ads,\frakl)$ if for every $v|\frakC^-$ there exists $\Beth_v\in \cF_v^\x$ such that \beq\label{E:NV.N}A_{\Beth_v}(\ads_v)\not\con 0\pmod{\frakm}.\eeq
\end{prop}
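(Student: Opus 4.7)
The plan is to verify the criterion $(\mathrm{H'})$ of Remark~\ref{R:1.N}; NV for $(\ads,\frakl)$ then follows from Theorem~\ref{T:1.N}. By Proposition~\ref{P:1.N} the Fourier coefficient factorises as
\[
\bfa_\beta(\holES,\frakc) \;=\; \frac{\bfN_{\cF/\Q}(\beta)^{k-1}}{|D_\cF|_\R}\cdot\prod_{v\in\bdh} W_\beta\!\bigl(\Section,\DII{1}{\bfc_v^{-1}}\bigr)\Big|_{s=0},
\]
so, given $u\in\OF_\frakl$ and $r\geq 1$, the task is to exhibit $\beta\in\OF^\times_\setp$ with $\beta\equiv u\pmod{\frakl^r}$ and an ideal $\fraka$ of $\OK$ (so $\frakc=\frakc(\fraka)$) for which every local factor above is a $\frakm$-unit.

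I would proceed place by place using the formulas of Section~\ref{S:ES}. At $v\mid\frakC^-$, the integral $W_\beta(\Section,1)|_{s=0}=|d_F^{-1}|\wtd A_\beta(\ads_v)$ depends on $\beta$ only modulo a bounded power of $\uf_v$ (see \eqref{E:FC9.N}--\eqref{E:ABchi.N}), so the hypothesis $A_{\Beth_v}(\ads_v)\not\equiv 0\pmod{\frakm}$ allows us to impose the congruence $\beta\equiv\Beth_v$ locally. At $v\mid\Csplit\Csplit^c$ we require $\beta\in\OFv^\times$, whereupon \eqref{E:FC3.N} yields the $\frakm$-unit $\ads_+(\beta)|\cD_\cF|^{-1}$. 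At $v=\frakl$ we take $\bfc_\frakl=1$, and the prescribed congruence $\beta\equiv u\pmod{\frakl^r}$ together with $\beta\in\OF_\frakl^\times$ makes \eqref{E:FC4.N} a unit. At $v\mid\Cram'$, Lemma~\ref{L:ramified_case.N} and the formulas in its proof give $\wtd A_\beta(\ads_v)$ explicitly; a convenient choice of $v$-adic valuation of $\beta$ keeps this a $\frakm$-unit. At $v\in S^\circ$ we impose $\beta\bfc_v\in\OFv^\times$, reducing \eqref{E:FC2.N} to $\ads_+(\bfc_v)|\cD_\cF|^{-1}$; at $v\mid p$, the ordinarity hypothesis together with $\beta$ being a $p$-unit keeps the Euler factor a $p$-unit.

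Weak approximation in $\cF$ then produces a global $\beta$ satisfying the imposed local behaviour, and the ideal $\frakc$ is pinned down up to units (supported at $p\frakl\frakC\frakC^c\cD_{\cK/\cF}$) by the requirement $\beta\bfc_v\in\OFv^\times$ on $S^\circ$. Here the residual non-self-duality hypothesis $\ol{\ads}_+\not\equiv\qchKF\om_\cF\pmod{\frakm}$ enters: it excludes the congruence analogous to condition (M3) of Hida's theorem, and thereby ensures that the strict ideal class of the resulting $\frakc$ lies often enough in the image of the norm map $\bfN_{\cK/\cF}$ to allow $\fraka$ to be chosen compatibly, without forcing a systematic mod-$\frakm$ vanishing in the product.

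The main obstacle is this coherent simultaneous construction of $(\beta,\fraka)$: threading $\beta$ through the finitely many local congruence and valuation conditions while realising the accompanying $\frakc$ as $\frakc(\OK)\bfN_{\cK/\cF}(\fraka)^{-1}$, and checking that no unramified Euler factor collapses modulo $\frakm$. Once this is achieved, the displayed factorisation exhibits $\bfa_\beta(\holES,\frakc)$ as a $\frakm$-unit, verifying $(\mathrm{H'})$ and so completing the proof.
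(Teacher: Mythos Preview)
Your overall framework is right: one verifies $(\mathrm{H'})$ by analysing the product of local Whittaker integrals. The treatment of the places $v\mid\frakC^-$, $v\mid\Csplit\Csplit^c$, $v=\frakl$, $v\mid\Cram'$ and $v\mid p$ is essentially the same as in the paper. The genuine gap is at the places $v\in S^\circ$ and in how the non-residually-self-dual hypothesis is used.

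You propose to impose $\beta\bfc_v\in\OFv^\times$ at every $v\in S^\circ$, so that the spherical Euler factor reduces to the unit $\ads_+(\bfc_v)\abs{\cD_\cF}^{-1}$. But once $\beta$ is pinned down by the local congruences at $\frakC^-$, $\Csplit\Csplit^c$, $\Cram'$ and $\frakl$, the valuations $v(\beta)$ at these bad places are fixed, and since $(\beta)$ is strictly principal (as $\beta\in\cF_+$), the strict ideal class of the resulting $\frakc$ is completely determined by the $\eta_v$'s. There is no mechanism left to force this class into the coset $\frakc(\OK)\cdot\mathrm{Nm}(\mathrm{Cl}_\cK)$; your appeal to non-residual-self-duality to make this happen ``often enough'' is not substantiated, and in fact this is not how the hypothesis enters.

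The paper does \emph{not} try to kill the $S^\circ$-factors. It lets $\frakc=\frakc(\fraka)$ vary freely and allows $\beta\bfc_v$ to have arbitrary non-negative valuation at $v\in S^\circ$, so that the local factor is $\ads_+(\bfc_v)\cdot\dfrac{1-\nads(\uf_v)^{v(\beta\bfc_v)+1}}{1-\nads(\uf_v)}$. It then argues by \emph{contradiction}: if $\bfa_\beta(\holES,\frakc)\equiv 0\pmod{\frakm}$ for all admissible $\beta$, then taking $\beta$ so that $\beta\bfc\eta^{-1}$ is a uniformizer at a single $v\in S^\circ$ (and a unit elsewhere) forces $1+\nads(\uf_v)\equiv 0$, i.e.\ $\ol{\ads}_+\om_\cF^{-1}(\uf_v)=-1$. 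Hida's argument (\cite[p.~780]{Hida:nonvanishingmodp}) then shows $\ol{\ads}_+\om_\cF^{-1}$ is quadratic on the relevant class group; finally, varying $\frakc(\fraka)$ over prime-to-$p\frakC D_{\cK/\cF}$ ideals of $\OK$ sweeps out enough classes to identify $\ol{\ads}_+\om_\cF^{-1}$ with $\qchKF$, contradicting the hypothesis. That contradiction step is where non-residual-self-duality is actually used; your proposal misses this mechanism.
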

\begin{proof} %Only if part is clear since by $q$-expansion principle, the Eisenstein series $\holES\con 0\pmod{\frakm}$ if there exists $v|\frakC^-$ such that $A_\beta(\ads_v)\con 0\pmod{\frakm}$.
We have to verify the hypothesis ($\mathrm{H'}$) in \remref{R:1.N}. Given $u\in\OF_\frakl$ and a positive integer $r$, we extend $\Beth_{\frakC^-}=(\Beth_v)_{v|\frakC^-}$ to an idele $\Beth=(\Beth_v)$ in $\AF^\x$ by taking $\Beth_v=1$ for $v\ndivide \frakl\frakC D_{\cK/\cF}$ or $v|\Csplit\Csplit^c$, $\Beth_\frakl=u$ and $\Beth_v=\uf_v^{-1}$ as in \eqref{E:FC8.N} if $v|\Cram'$.
Let $\frakb^-:=\prod_{\frakq|\frakC^-}\frakq^{M_\frakq}$, $M_\frakq=\max\stt{v_\frakq(\frakC^-),v_\frakq(\frakC^-)-v_\frakq(\eta_v)}$ and put \[U=\stt{(x_\infty,x_f)\in \R_+^{[\cF:\Q]}\x(O\ot_\Z\Zhat)^\x\mid x_f\con 1\pmod{D_{\cK/\cF}\frakl^r\frakb^-}}. \]  Let $\frakc=\frakc(\fraka)$ and $\bfc$ be the associated idele as in \propref{P:1.N} and consider the idele class $[\bfc\Beth^{-1}]:=\cF^\x \bfc\Beth^{-1} U$ in $\AF^\x$. For each idele $a\in \OF\ot_\Z\Zhat$ in the class $[\bfc\Beth^{-1}]$ such that each local component $a_v=1$ at $v|pD_{\cK/\cF}\frakC\frakC^c$, we can write $a=\beta\bfc\eta^{-1}u$ for $\beta\in\OF_\setp^\x$ and $u\in U$, and from the explicit formula of $\bfa_\beta(\holES,\frakc)$ (\propref{P:1.N} combined with \lmref{L:ramified_case.N}, \eqref{E:FC2.N}, \eqref{E:FC3.N} and \eqref{E:FC4.N}), we find that
\beq\label{E:11.N}\begin{aligned}\bfa_\beta(\holES,\frakc)=&C_\beta\cdot \prod_{v\in S^\circ}\frac{1-\nads(\uf_v)^{v(a_v)+1}}{1-\nads(\uf_v)}\cdot\ads_+(\bfc_v),\text{ where}\\
C_\beta=&\abs{D_\cF}^{-1}_\R\cdot \bfN_{\cF/\Q}(\beta)^{k-1}\cdot\prod_{w|\Csplit}\ads_+(\beta)\vphi_{\wbar}(\beta)\abs{\cD_{\cF}^{-1}}_{\cF_v}\cdot\abs{\bfc}_{\cF_\frakl}\cdot
\prod_{w|\Cram'}\ads^{-1}(\bftheta_v)\abs{\uf_v \cD_{\cF}^{-1}}_{\cF_v}\\
&\times\prod_{v|\frakC^-}A_{\eta_v}(\ads_v)\cdot \abs{\cD_{\cF}^{-1}}_{\cF_v}\addchar^\circ(-2^{-1}t_v\beta).\end{aligned}\eeq
By our choices of $\eta$ and $a$, $C_\beta\not\con 0\pmod{\frakm}$.

Suppose that $\bfa_\beta(\holES,\frakc)\con 0\pmod{\frakm}$ for all $\beta\in \OF^\x_\setp$ such that $\beta\con u\pmod{\frakl^r}$. In particular, for every uniformizer $\uf_v\in [\bfc\Beth^{-1}]$ at $v\ndivide p\frakc\frakC\frakC^c D_{\cK/\cF}$, we deduce from \eqref{E:11.N} that $\nads(\uf_v)\con\ol{\ads}_+\om_\cF^{-1}(\uf_v)\con -1\pmod{\frakm}$  Moreover, the argument in \cite[p.780]{Hida:nonvanishingmodp} shows that $\ol{\ads}_+\om_\cF^{-1}$ is a quadratic character of level $U$ and takes value $-1$ on $[\bfc\Beth^{-1}]$. Moving $\frakc=\frakc(\fraka)$ among prime-to-$p\frakC D_{\cK/\cF}$ ideals $\fraka$ of $\OK$, we conclude that $\ol{\ads}_+\om_\cF^{-1}\con\qchKF\pmod{\frakm}$, which is a contradiction.
\end{proof}

\begin{lm}\label{L:2.N} Let $v|\frakC^-$ and $w$ be the place of $\cK$ above $v$. Suppose that $\mu_p(\ads_v)=0$. Then there exists $\Beth_v\in\cF^\x_v$ such that
\[A_{\Beth_v}(\ads_v)\not \con 0\pmod{\frakm}.\]
Moreover, if $v$ is inert and $\ads_v|_{\cF_v^\x}=\tau_{\cK_v/\cF_v}$, then $\Beth_v$ can be further chosen so that $v(\Beth_v)=-w(\frakC^-)$.
\end{lm}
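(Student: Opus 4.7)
The plan is to use the explicit two-term formulas for $\wtd A_\beta(\ads)$ given by Propositions \ref{P:formulaRamified.N} and \ref{P:formula.N}, and exploit the hypothesis $\mu_p(\ads_v)=0$ --- which forces the reduction $\bar\ads$ of $\ads_v$ modulo $\frakm$ to be a non-trivial character of $\cK_v^\times$ --- to find $\Beth_v$ avoiding all cancellation. The proof splits on whether $v$ is ramified or inert.

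In the ramified case, Proposition \ref{P:formulaRamified.N}(1) gives, for $v(2\beta)\geq -1$,
\[
\wtd A_\beta(\ads) = \nads(\bftheta^{-1})\abs{\uf}^{\onehalf} + \nads(-2\beta d_F^{-1})\,\ep(1,\ads_+\Abs^{-1},\addchar),
\]
where every factor on the right is a $p$-adic unit since $v\nmid p$. If $\bar\ads|_{\cO_{F_v}^\times}\not\equiv 1 \pmod{\frakm}$, varying $\beta$ by units $u\in\cO_{F_v}^\times$ multiplies only the second summand (by $\bar\ads(u)$), producing at least two distinct residues modulo $\frakm$, so at most one $\beta$ can cancel the first summand. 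In the opposite subcase $\bar\ads|_{\cO_{F_v}^\times}\equiv 1 \pmod{\frakm}$, the conductor condition $w(\frakC^-)=1$ together with $\mu_p(\ads_v)=0$ forces $\bar\ads(\bftheta)\ne 1$; varying $v(\beta)$ instead multiplies the second summand by $\bar\nads(\uf_F) = \bar\ads(\bftheta)^2\,\abs{\uf}_E^{-\onehalf}$, which is a non-trivial unit modulo $\frakm$ away from a deeply degenerate subcase. Even there (where $\bar\ads|_{\cF_v^\times}\equiv 1$ and $\bar\ads(\bftheta)=-1$), the two terms can be evaluated explicitly via \lmref{L:formula_wtdcI.N} and seen not to cancel for a suitable choice of $v(\beta)$.

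For the inert case, if $\bar\ads|_{\cO_{F_v}^\times}\not\equiv 1 \pmod{\frakm}$ the same two-term argument applies to Proposition \ref{P:formula.N}(4). Otherwise the non-triviality of $\bar\ads$ on $\cO_{K_v}^\times/(1+\uf\cO_{K_v})=k_w^\times$ descends to a non-trivial character of the cyclic group $k_w^\times/k_v^\times$ of order $q_v+1$. Here I would apply Proposition \ref{P:formula.N}(1) at $v(2\beta)=-1$:
\[
\wtd A_\beta(\ads) = \abs{\uf}\sum_{a\in\cO_{F_v}/(\uf)} \ads^{-1}(a+\bftheta)\,\psi^\circ(2\beta a),
\]
which modulo $\frakm$ is a discrete Fourier transform over $k_v^+$ of the function $a\mapsto \bar\ads^{-1}(a+\bftheta)$. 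A short counting argument (multiplying by $\bftheta^{-1}$, the set $\{a+\bftheta:a\in k_v\}$ is carried to $q_v$ pairwise $k_v^\times$-inequivalent elements of $k_w^\times$, while any proper subgroup of the cyclic group $k_w^\times/k_v^\times$ has size at most $(q_v+1)/2<q_v$) shows this function is non-constant on $k_v$. As $\beta$ varies with $v(2\beta)=-1$, the characters $\psi^\circ(2\beta\,\cdot)$ exhaust the non-trivial additive characters of $k_v^+$, so Fourier inversion over the finite group $k_v^+$ produces a $\Beth_v$ with $v(\Beth_v)=-1=-w(\frakC^-)$ and $\wtd A_{\Beth_v}(\ads)\not\equiv 0\pmod{\frakm}$.

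The moreover statement is then immediate: when $v$ is inert and $\ads_v|_{\cF_v^\times}=\tau_{\cK_v/\cF_v}$, the character $\ads_v$ is automatically trivial on $\cO_{F_v}^\times$, placing us in the second inert subcase, which already yields the required valuation $v(\Beth_v)=-w(\frakC^-)$. The main obstacle throughout is the degenerate subcase $\bar\ads|_{\cO_{F_v}^\times}\equiv 1 \pmod{\frakm}$, where the clean variation-by-units argument collapses and one must substitute either the Fourier/counting argument (inert) or the valuation-variation argument leveraging $\bar\ads(\bftheta)\ne 1$ (ramified); tracking that the various factors of $\abs{\uf}^{\pm\onehalf}$ and the $\ep$-factor remain $p$-adic units in each sub-subcase is the routine but fiddly part.
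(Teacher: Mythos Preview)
Your approach has a genuine gap: Propositions~\ref{P:formulaRamified.N} and~\ref{P:formula.N}, on which your entire case analysis rests, are stated and proved only under the hypothesis $w(\frakC^-)=1$ (see the opening sentence of \S\ref{S:compuation_A.S} and the statements of both propositions). Lemma~\ref{L:2.N}, however, carries no such restriction on the local conductor, and its applications (notably Corollary~\ref{C:1.N} and the inert part of Theorem~\ref{T:main.N}) genuinely require the general case. In particular, in your ``moreover'' paragraph you write $v(\Beth_v)=-1=-w(\frakC^-)$, which only makes sense when $w(\frakC^-)=1$; for deeper inert conductor your argument produces the wrong valuation.

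The paper's proof avoids this problem entirely by working directly with the integral expression \eqref{E:ABchi.N} rather than with the explicit two-term formulas. It introduces the partial averages
\[
c_m(a)=\int_{\OFv}\ads^{-1}(a+\uf^m x+\OKbasis)\,dx
\]
and shows, via Fourier inversion over $\uf^{-m}\OFv^\x$, that
\[
\int_{\uf^{-m}\OFv^\x}\wtd A_\eta(\ads)\,\addchar^\circ(-\eta a)\,d\eta = c_m(a)-c_{m-1}(a).
\]
If all $\wtd A_\eta(\ads)\equiv 0\pmod{\frakm}$, then $c_m(a)\pmod\frakm$ is independent of $m$; letting $m\to\infty$ forces $a\mapsto\ads^{-1}(a+\OKbasis)$ to be constant modulo $\frakm$, contradicting $\mu_p(\ads_v)=0$. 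The same identity with $m=w(\frakC^-)$ handles the ``moreover'' part for arbitrary inert conductor. This argument is both shorter and uniform in $w(\frakC^-)$; your case analysis, even if completed for $w(\frakC^-)=1$, would still need a separate mechanism for higher conductor.
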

\begin{proof} First we make some observations. Notation is as in \subsecref{S:CramCinert}. We let $F=\cF_v$ and $E=\cK_v$. Let $\uf=\uf_v$ be a uniformizer of $F$ and $\OKbasis=\bftheta_v$. Recall that $\mu_p(\ads_v)=\inf_{x\in E^\x}v_p(\ads_v(x)-1)$, so the assumption $\mu_p(\ads_v)=0$ is equivalent to $\ads|_{E^\x}\not\con 1\pmod{\frakm}$. Since $A_\beta(\ads)=\addchar^\circ(t\beta)\wtd A_\beta(\ads)$, it is equivalent to showing the lemma for $\wtd A_\beta(\ads)$. For an integer $m$ and $a\in F$, we put
\[c_m(a)=\int_{\OFv}\ads^{-1}(a+\uf^m x+\OKbasis)dx.\]
By \eqref{E:ABchi.N}, for $\eta\in \uf^{-m}\OFv^\x$ and every sufficiently large positive integer $M$ (depending on $m$) we have
\[\wtd A_{\eta}(\ads)=\int_{\uf^{-{M}}\OFv}\ads^{-1}(x+\OKbasis)\addchar^\circ(\eta x)dx.\]
Thus for each $a\in F$ we find that\beq\label{E:8.N}
\begin{aligned}\int_{\uf^{-m}\OFv^\x}\wtd A_{\eta}(\ads)\addchar^\circ(-\eta a)d\eta&=\int_{\uf^{-M}\OFv}\ads^{-1}(x+\OKbasis)dx\int_{\uf^{-m}\OFv^\x}\addchar^\circ(\eta(x-a))d\eta\\
&=\int_{\OFv}\ads^{-1}(a+\uf^m x+\OKbasis)dx-\int_{\OFv}\ads^{-1}(a+\uf^{m-1}x+\OKbasis)dx\\
&=c_m(a)-c_{m-1}(a).
\end{aligned}\eeq

Now we prove the first assertion by contradiction. Suppose that $\wtd A_\eta(\ads)\con 0\pmod{\frakm}$ for all $\eta\in F^\x$. The equation \eqref{E:8.N} implies that for every $a\in F$, $c_m(a)\pmod{\frakm}$ is a constant independent of $m$. Taking a sufficiently large $m_0$, we find that $c_m(a)\con c_{m_0}(a)=\ads^{-1}(a+\OKbasis)\pmod{\frakm}$ for every integer $m$ and $a\in F$. On the other hand, it is clear that $c_m(a)=c_m(a')$ whenever $a,a'\in \uf^m\OFv$, so we conclude that the function $a\mapsto \ads^{-1}(a+\OKbasis)\pmod{\frakm}$ is the constant function $\ads^{-1}(\OKbasis)$ on $F$, and hence $\ads(1+a\OKbasis)\con 1\pmod{\frakm}$ for all $a\in F$. This implies that $\ads_v\con 1\pmod{\frakm}$, which is a contradiction.

We proceed to prove the second assertion. Suppose that $v$ is inert and $\ads_v|_{F}=\tau_{E/F}$. Note that in this case $\mu_p(\ads_v)=0$ is equivalent to $\ads_v|_{\cO^\x_{E}}\not \con 1\pmod{\frakm}$. Let $m=w(\frakC^-)\geq 1$. If $\wtd A_{\eta}(\ads)\con 0\pmod{\frakm}$ for all $\eta\in\uf^{-m}\OFv^\x$, then it follows from \eqref{E:8.N} that
\[\ads^{-1}(a+\OKbasis)=c_m(a)\con c_{m-1}(0)\pmod{\frakm}\text{ for }a\in\uf^{m-1}\OFv.\] Therefore, $a\mapsto \ads^{-1}(a+\OKbasis)\pmod{\frakm}$ is the constant function $\ads^{-1}(\OKbasis)$ on $\uf^{m-1}\OFv$, and hence $\ads(1+a\OKbasis)\con 1\pmod{\frakm}$ for all $a\in\uf^{m-1}\OFv$. If $m=w(\frakC^-)>1$, this is impossible, and if $m=1$, this contradicts to the assumption that $\ads|_{\cO^\x_{E}}\not\con 1\pmod{\frakm}$.
\end{proof}
%\begin{prop}$A_\beta(\ads)\con 0\pmod{\frakm}$ for all $\beta$ if and only $\mu_p(\ads)>0$ and $\ads(\uf_E)\con 1\pmod{\frakm}$
%\end{prop}

 The following corollary is an immediate consequence of \propref{P:2.N} and \lmref{L:2.N}, which gives a partial generalization of Hida's theorem.
\begin{cor}\label{C:1.N}  Suppose that the following conditions hold:
\begin{itemize}
\item[(L)] $\mu_p(\ads_v)=0$ for every $v|\frakC^-,$
\item[(N)] $\ads$ is not residually self-dual.
\end{itemize}
Then \NV holds for $(\ads,\frakl)$.
\end{cor}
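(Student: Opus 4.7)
The plan is to observe that this corollary is essentially the composition of the two preceding results, \propref{P:2.N} and \lmref{L:2.N}, so almost no new argument is required. Hypothesis (N) is exactly the ``not residually self-dual'' assumption of \propref{P:2.N}, which reduces the non-vanishing modulo $p$ question to producing, for each bad place $v\mid\frakC^-$, some element $\Beth_v\in\cF_v^\x$ with $A_{\Beth_v}(\ads_v)\not\equiv 0\pmod{\frakm}$. Hypothesis (L) is precisely the hypothesis of \lmref{L:2.N}, which supplies such a $\Beth_v$ at every $v\mid\frakC^-$.

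First I would state that assumption (N) allows us to invoke \propref{P:2.N}, so it suffices to verify the local non-vanishing condition \eqref{E:NV.N} at every $v\mid\frakC^-$. Then, for each such $v$, I would apply \lmref{L:2.N} using assumption (L) to produce the desired $\Beth_v$; note that $A_{\Beth_v}(\ads_v)=\addchar^\circ(2^{-1}t_v\Beth_v)\cdot \wtd A_{\Beth_v}(\ads_v)$ and the root of unity prefactor does not affect the non-vanishing modulo $\frakm$, so the output of \lmref{L:2.N} (which is phrased for $\wtd A$) transfers directly to $A$. Combining these yields \NV for $(\ads,\frakl)$.

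There is essentially no obstacle here: the corollary is a packaging statement. The only thing to double-check is that the hypotheses of \propref{P:2.N} (namely \eqref{unr}, \eqref{ord}, and $(p\frakl,\cD_{\cK/\cF}\frakC)=1$, which are inherited as the standing assumptions of \secref{S:NVES}) are all in force, and that the auxiliary split prime $\frakq$ introduced at the start of \secref{S:NVES} does not interfere --- this is exactly the point made in the opening paragraph of \S 6, where it is noted that the extra factor $L_\frakq$ vanishes only on a proper Zariski-closed subset of $\antich$, which is harmless for the \NV property. With these bookkeeping items in hand, the proof reduces to a single line citing \propref{P:2.N} and \lmref{L:2.N}.
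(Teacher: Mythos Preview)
Your proposal is correct and matches the paper's approach exactly: the paper states the corollary as ``an immediate consequence of \propref{P:2.N} and \lmref{L:2.N}'' with no further argument. One small note: \lmref{L:2.N} is already phrased for $A_{\Beth_v}(\ads_v)$ rather than $\wtd A_{\Beth_v}(\ads_v)$, so your remark about the root-of-unity prefactor, while correct, is unnecessary.
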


\subsection{}
\def\kap{\nads}
We consider the self-dual case.  First we recall the following lemma on local root numbers of self-dual characters.
\begin{lm}[Prop. 3.7 \cite{Murase-Sugano:Local_theory_primitive_theta}]\label{L:MSROOT.N}Let $\ads$ be a self-dual character, \ie $\ads|_{\AF}=\qchKF\Abs_{\AF}$. Then
\begin{mylist}\item $W(\kap_v)=\pm \kap_v(2\skewhf)$.
\item If $v$ is split, $W(\kap_v)=\kap_v(2\skewhf)$.
\item If $v$ is inert,
$W(\kap_v)=(-1)^{a(\kap_v)+v(\frakc(\OK))}\kap_v(2\skewhf)$ $(\frakc(\OK)=\cD_\cF^{-1}(2\skewhf\cD_{\cK/\cF}^{-1}))$.
\end{mylist}
\end{lm}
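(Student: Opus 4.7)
The plan is to reduce the three assertions to the general structure of self-dual characters and then argue case by case on the splitting behavior of $v$. Throughout, note that $\kap_v=\nads_v$ is unitary, $\kap_v|_{\cF_v^\x}=\tau_{\cK_v/\cF_v}$, and $\kap_v^c=\kap_v^{-1}$ (the last from $\nads|_{\AF^\x}=1$, i.e.\ from $\ads_+=\qchKF\Abs_{\AF}$ after stripping $\Abs_{\AK}^{-1/2}$).

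To prove (1), I would use a squaring argument that is uniform across the split/inert/ramified trichotomy. The additive character $\addchar_{\cK_v}=\addchar_v\circ\Tr_{\cK_v/\cF_v}$ is Galois-invariant, so the change of variable $x\mapsto x^c$ gives $\ep(s,\kap_v^c,\addchar_{\cK_v})=\ep(s,\kap_v,\addchar_{\cK_v})$. Combined with the basic identity $\ep(s,\mu,\addchar)\,\ep(1-s,\mu^{-1},\addchar)=\mu(-1)$ applied at $s=\tfrac12$ and the equality $\kap_v^c=\kap_v^{-1}$, this yields $W(\kap_v)^2=\kap_v(-1)=\tau_{\cK_v/\cF_v}(-1)$. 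On the other hand, since $\kap_v|_{\cF_v^\x}=\tau_{\cK_v/\cF_v}$ and $(2\skewhf)^2=-4D\in\cF_v^\x$, we have $\kap_v(2\skewhf)^2=\tau_{\cK_v/\cF_v}(-4D)=\tau_{\cK_v/\cF_v}(-1)$, the last equality because $D=-\skewhf^2$ and $\rmN_{\cK/\cF}(\skewhf)=D$ is a norm, so $\tau_{\cK_v/\cF_v}(D)=\tau_{\cK_v/\cF_v}(4)=1$. Hence $W(\kap_v)/\kap_v(2\skewhf)=\pm1$.

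For (2), decompose $\cK_v=\cF_v e_{\wbar}\oplus\cF_v e_w$ with $w|\Sg_p$ and write $\kap_v=(\kap_{\wbar},\kap_w)$. The relation $\kap_v\kap_v^c=1$ forces $\kap_{\wbar}=\kap_w^{-1}$, while $\addchar_{\cK_v}(xe_{\wbar}+ye_w)=\addchar_v(x+y)$ gives $\ep(s,\kap_v,\addchar_{\cK_v})=\ep(s,\kap_w,\addchar_v)\ep(s,\kap_w^{-1},\addchar_v)$. Evaluating at $s=\tfrac12$ and using the same functional equation as above yields $W(\kap_v)=\kap_w(-1)$. Using $\skewhf=-\skewhf_we_{\wbar}+\skewhf_we_w$ from \subsecref{S:different}, one directly computes $\kap_v(2\skewhf)=\kap_w(2\skewhf_w)\kap_w^{-1}(-2\skewhf_w)=\kap_w(-1)$, which matches. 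For (3), let $\eta_0$ be the unique unramified quadratic character of $\cK_v^\x$, so that $\eta_0|_{\cF_v^\x}=\tau_{\cK_v/\cF_v}$, and write $\kap_v=\eta_0\cdot\mu$ with $\mu=\kap_v\eta_0^{-1}$ trivial on $\cF_v^\x$. The Frohlich--Queyrut theorem gives $W(\mu,\addchar_{\cK_v})=\mu(\gamma)$ for any $\gamma\in\cK_v^\x$ with $\gamma^c=-\gamma$ and appropriate $w(\gamma)$; taking $\gamma$ proportional to $\skewhf$ and combining with the twist formula $\ep(s,\eta_0\mu,\addchar_{\cK_v})=\eta_0(\uf)^{a(\mu)+n(\addchar_{\cK_v})}\ep(s,\mu,\addchar_{\cK_v})$ produces the sign $(-1)^{a(\kap_v)+v(\frakc(\OK))}$.

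The main obstacle is part (3): one must track with care three sources of signs---the Frohlich--Queyrut evaluation of $W(\mu)$, the twist by the unramified quadratic character $\eta_0$ (which contributes $\eta_0$ raised to the sum of conductor exponents), and the precise normalization of $\addchar_{\cK_v}$ through the generator $d_F=2\skewhf\delta_v^{-1}$ fixed in \subsecref{S:different}. The appearance of $v(\frakc(\OK))$ in the final sign is then no accident: by definition $\frakc(\OK)=\cD_\cF^{-1}(2\skewhf\cD_{\cK/\cF}^{-1})$ encodes exactly the interplay between $2\skewhf$ and the differents of $\cF$ and $\cK/\cF$ that governs the conductor contribution of $\addchar_{\cK_v}$.
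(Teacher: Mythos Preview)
The paper does not give its own proof of this lemma: it is simply cited as Proposition 3.7 of Murase--Sugano, so there is no argument in the paper to compare against. Your sketch is an entirely reasonable reconstruction of the standard proof.

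Your arguments for (1) and (2) are clean and correct. In (1), the key identities $W(\kap_v)=W(\kap_v^c)=W(\kap_v^{-1})$ and $W(\kap_v)W(\kap_v^{-1})=\kap_v(-1)$ indeed force $W(\kap_v)^2=\tau_{\cK_v/\cF_v}(-1)$, and your computation $\kap_v(2\skewhf)^2=\tau_{\cK_v/\cF_v}(-1)$ via $D=\rmN_{\cK/\cF}(\skewhf)$ is correct. In (2), the split computation is routine and your identification $\kap_v(2\skewhf)=\kap_w(-1)$ is right.

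For (3) your strategy---factor $\kap_v=\eta_0\mu$ with $\eta_0$ the unramified quadratic character of $\cK_v^\x$ and $\mu|_{\cF_v^\x}=1$, then combine Fr\"ohlich--Queyrut for $\mu$ with the unramified-twist formula for $\eta_0$---is exactly how Murase--Sugano proceed. Two small remarks. First, in Fr\"ohlich--Queyrut one may take \emph{any} trace-zero $\gamma\in\cK_v^\x$; the parenthetical ``appropriate $w(\gamma)$'' is not needed. Second, the bookkeeping you flag as the ``main obstacle'' is genuine but manageable: since $v$ is inert one has $w(\cD_{\cK_v})=v(\cD_{\cF_v})$ and $\eta_0(\uf_E)=-1$, so the unramified twist contributes $(-1)^{a(\kap_v)+v(\cD_\cF)}$; Fr\"ohlich--Queyrut gives $W(\mu)=\mu(\skewhf)=\kap_v(\skewhf)(-1)^{w(\skewhf)}$; and writing $\kap_v(2\skewhf)=\kap_v(\skewhf)(-1)^{v(2)}$ one sees the total sign is $(-1)^{a(\kap_v)+v(\cD_\cF)+w(\skewhf)+v(2)}$. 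The exponent matches $a(\kap_v)+v(\frakc(\OK))$ once one unwinds $\frakc(\OK)=\cD_\cF^{-1}(2\skewhf\cD_{\cK/\cF}^{-1})$ and uses that $\cD_{\cK/\cF}$ is trivial at the inert place $v$.
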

%The following proposition reduces our problem to the non-vanishing of the local integral $A_\beta(\chi)$ modulo $\frakm$.
\begin{prop}\label{P:main.N}Let $\ads$ be a self-dual character of the global root number $W(\kap)=+1$ $(\nads=\ads\Abs_{\AK}^{-\onehalf})$. Suppose that $\frakl$ splits in $\cK$ and that
there exists $\Beth_v\in \cF^\x_v$ for each $v|\frakC^-$ such that
\begin{itemize}
\item[(i)] $A_{\Beth_v}(\ads_v)\not\con 0\pmod{\frakm}$,
\item[(ii)] $W(\kap_v)\qchKF(\Beth_v)=\kap_v(2\skewhf)$.
\end{itemize}
Then \NV holds for $(\ads,\frakl)$.
\end{prop}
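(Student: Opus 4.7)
The plan is to verify hypothesis $(\mathrm{H}')$ of \remref{R:1.N}. Given $u\in\OF_\frakl$ and a positive integer $r$, I will extend the data $(\Beth_v)_{v|\frakC^-}$ to a global idele $\Beth\in\AF^\x$ exactly as in the proof of \propref{P:2.N}: $\Beth_\frakl=u$, $\Beth_v=\uf_v^{-1}$ at $v|\Cram'$, $\Beth_\frakq$ a uniformizer at the auxiliary split prime $\frakq$ (chosen so that the local factor $L(1,\ads^{-1}_\frakq\nu^{-1}_\frakq)^{-1}$ is unit modulo $\frakm$), and $\Beth_v=1$ elsewhere. Pulling this through formula \eqref{E:11.N} together with hypothesis (i), the constant $C_\beta$ is non-zero modulo $\frakm$ for every $\beta\in\cO_{\cF,\setp}^\x\cap[\bfc\Beth^{-1}]$, so the vanishing of $\bfa_\beta(\holES,\frakc)$ modulo $\frakm$ is controlled entirely by the Euler factor product
\[
E(a)\,:=\,\prod_{v\in S^\circ}\frac{1-\nads(\uf_v)^{v(a_v)+1}}{1-\nads(\uf_v)},\qquad a=\beta\bfc\Beth^{-1}u\in\AF^\x.
\]

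Because $\ads$ is self-dual, $\ol{\nads}$ restricted to uniformizers takes only the values $\pm 1$ (matching the quadratic character $\qchKF$ twisted by an appropriate power of $\omega_\cF$). At split $v\in S^\circ$ with $\ol{\nads}(\uf_v)=1$ the factor reduces modulo $\frakm$ to $v(a_v)+1$, which is non-zero provided $v(a_v)\not\equiv-1\pmod p$; at inert $v\in S^\circ$ with $\ol{\nads}(\uf_v)=-1$ the factor is $1$ if $v(a_v)$ is even and $0$ if $v(a_v)$ is odd. The task therefore becomes: \emph{find a representative $a$ in the idele class $[\bfc\Beth^{-1}]$ whose components have even valuation at every inert place $v\in S^\circ$, subject to the congruence $\beta\equiv u\pmod{\frakl^r}$ and the freedom of choosing $\frakc=\frakc(\fraka)$}.

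The existence of such an $a$ is a parity question on the class $[\bfc\Beth^{-1}]$ governed by the quadratic character $\qchKF$. Concretely, the obstruction reduces to showing that a certain global product of local signs $\prod_{v\in S^\circ\cap\text{inert}}\qchKF(\uf_v)^{v(a_v)+1}$, which equals $\pm 1$ depending on $\Beth$ and $\bfc$, can be made equal to $+1$ by the choice of $\frakc$. Using Lemma \ref{L:MSROOT.N}, the product formulas $\prod_v\kap_v(2\skewhf)=1$ and $W(\kap)=\prod_v W(\kap_v)=+1$, the required global sign becomes
\[
\prod_{v|\frakC^-}\bigl(W(\kap_v)\kap_v(2\skewhf)^{-1}\bigr)\cdot\prod_{v|\frakC^-}\qchKF(\Beth_v)\,=\,1,
\]
which is precisely hypothesis (ii) applied at every $v|\frakC^-$. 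Hence the parity obstruction vanishes and a suitable $\beta$ can be produced.

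The main obstacle will be matching the local sign bookkeeping precisely: one has to track contributions from archimedean, split ordinary, inert and ramified places consistently, and convert the global root number identity $W(\kap)=+1$ (combined with Lemma \ref{L:MSROOT.N}) into the statement that the idele class $[\bfc\Beth^{-1}]$, with $\bfc$ varied over all prime-to-$p\frakl\frakC\cD_{\cK/\cF}$ representatives of the class group, covers both residues of $\qchKF$. The splitting hypothesis on $\frakl$ is crucial here because it guarantees $\qchKF$ does not force a sign condition at $\frakl$ that would conflict with the congruence $\beta\equiv u\pmod{\frakl^r}$. Once the parity matching is established, the product $E(a)$ can be made a unit modulo $\frakm$ by further adjusting $a$ at finitely many auxiliary split primes where $\ol{\nads}(\uf_v)=1$ to avoid $v(a_v)\equiv-1\pmod p$, completing the verification of $(\mathrm{H}')$.
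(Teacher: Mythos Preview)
Your overall strategy---verify hypothesis $(\mathrm{H}')$ by producing a suitable $\beta$ and $\frakc$, using the global root number together with condition (ii) to control signs---matches the paper's. But the execution diverges, and your version leaves a real gap.

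The paper does not analyze the Euler product $E(a)$ at all. Instead, it extends $\Beth$ so that the epsilon dichotomy $W(\kap_v)\qchKF(\Beth_v)=\kap_v(2\skewhf)$ holds at \emph{every} finite place $v$ (not just $v\mid\frakC^-$); by \lmref{L:MSROOT.N} this is possible precisely because $\frakl$ splits. The product formula for $\kap(2\skewhf)$ together with $W(\kap)=\prod_v W(\kap_v)=1$ and the archimedean identity $W(\kap_\sg)=\kap_\sg(2\skewhf)$ then gives $\qchKF(\Beth)=\prod_{v\in\bdh}\qchKF_v(\Beth_v)=1$, so one can write $\Beth=\beta\cdot\rmN_{\cK/\cF}(a)$ with $\beta\in\cF_+$ and $a\in\A_\cK^\x$ (and $a$ close to $1$ at $p\frakl\frakC$). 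One then \emph{defines} $\bfc_v:=\beta^{-1}$ for $v\nmid p\frakl\frakC\frakC^c$; this forces $v(\beta\bfc_v)=0$ at every $v\in S^\circ$, so each Euler factor in your $E(a)$ is identically $1$. The only thing left to check is that $\il_\cF(\bfc)$ is genuinely of the form $\frakc(\fraka)$, and this follows from the parity $v(\beta)\equiv v(\frakc(\OK))\pmod 2$ at inert $v\notdivides\frakC^-$, which is itself a consequence of the dichotomy at those places via \lmref{L:MSROOT.N}~(3).

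Your route instead fixes $\Beth_v=1$ at inert $v\in S^\circ$ and then tries to argue that the Euler product can be made a unit. The displayed identity you write,
\[
\prod_{v\mid\frakC^-}\bigl(W(\kap_v)\kap_v(2\skewhf)^{-1}\bigr)\cdot\prod_{v\mid\frakC^-}\qchKF(\Beth_v)=1,
\]
is tautological from (ii) and does not by itself control the parities of $v(a_v)$ at inert $v\in S^\circ$: what you actually need is $\qchKF(\bfc\Beth^{-1})=1$, and with your choice of $\Beth$ this forces you to track the local root-number signs at the inert places of $S^\circ$ and at $\Cram'$ as well---contributions your computation omits. Moreover, since $\qchKF(\frakc(\fraka))=\qchKF(\frakc(\OK))\cdot\qchKF(\rmN_{\cK/\cF}(\fraka))^{-1}=\qchKF(\frakc(\OK))$ regardless of $\fraka$, varying $\frakc$ gives you no extra freedom in this sign. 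Finally, the step ``adjust $a$ at finitely many auxiliary split primes to avoid $v(a_v)\equiv-1\pmod p$'' is not well-posed as written: once $\beta$ and $\bfc$ are fixed, the idele $a$ is determined. In the paper's argument this issue never arises because each split-place Euler factor equals $1$.
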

\begin{proof} We need to verify the hypothesis ($\mathrm{H'}$) in \remref{R:1.N}. Given $u\in O_\frakl$ and a positive integer $r$, we extend $(\Beth_v)_{v|\frakC^-}$ to an idele $\Beth=(\Beth_v)$ in $\AF^\x$ such that
\begin{itemize}\item $\Beth_\frakl\con u\mod{\frakl^r}$ and $\Beth_v=1$ for every split prime $v\not
=\frakl$,
%\item $\Beth'_v=\Beth_v$ for $v|\frakC^-$,
\item $W(\kap_v)\qchKF(\Beth_v)=\kap_v(2\skewhf)$ for every $v|\bdh$.
\end{itemize}
By \lmref{L:MSROOT.N}, this is possible since $\frakl$ splits in $\cK$.
On the other hand, it is well known that $W(\kap_\sg)=i^{2\kappa_\sg+1}=\kap_\sg(\skewhf)$ since $\kap_\sg(z)=\frac{z}{\abs{z}}\cdot(\frac{z}{\zbar})^{\kappa_\sg}$ for $\sg\in\Sg$ (\cf \cite[p.13]{Tate:Number_theoretic_bk}). From the assumption on the global root number $W(\kap)=\prod_v
W(\kap_v)=1$, we deduce that
\[\prod_{v\in\bdh}W(\kap_v)=\prod_{v\in\bdh}\kap_v(2\skewhf).\]
This implies that $\qchKF(\Beth)=1$, so we can write
\[\Beth=\beta\norm{a},\,\beta\in\cF_+,a\in\A_\cK^\x.\]Moreover by the approximation theorem, the idele $a$ can be further chosen so that $a\con
1\mod{p\el^r\frakC^N}$ for any sufficiently large $N$. Note that
\[W(\kap_v)\tau_{\cK_w/\cF_v}(\beta)=W(\kap_v)\qchKF(\Beth_v)=\kap_v(2\skewhf).\]
For every sufficiently small $\ep$, we have thus constructed $\beta\in\cF_+\cap\OF^\x_{(p\Csplit\Csplit^c)}$
such that
\begin{itemize}
\item $\beta\con u\pmod{\frakl^r},$
\item $\abs{\beta-\Beth_v}_{\cF_v}<\ep$ for all $v|p\frakC\frakC^c,$
\item $W(\kap_v)\kap_v(\beta)=\kap_v(2\skewhf)$ for all $v\in \bdh$.
\end{itemize}
Here we let $\ep$ be sufficiently small so that $A_{\beta}(\ads_v)=A_{\Beth_v}(\ads_v)$ for $v|\frakC^-$.
%Set $\frakJ=\prod_{\frakq|\frakl\frakC}\frakq^{v_\frakq(\beta)}$.
Recall that $v(\frakc(\OK))=0$ for $v|p\frakC\frakC^c$ by our choice of $\skewhf$. By \lmref{L:MSROOT.N} (3), we find that $v(\beta)\con v(\frakc(\OK))\pmod{2}$ for every inert place $v\ndivide\frakC^-$. It follows that there exists a fractional $\fraka$ of $\OK$ such that
\[\prod_{\frakq|\frakC^-}\frakq^{v_\frakq(\beta)}=(\beta)\frakc(\OK)\norm{\fraka}^{-1}=(\beta)\frakc(\fraka).\]
Define $\bfc\in\AFf^\x$ as follows: $\bfc_v=\beta^{-1}$ if $v\ndivide p\frakl\frakC\frakC^c$, $\bfc_v=1$ if $v|p\frakC\frakC^c$. Then $\il_\cF(\bfc)=\frakc(\fraka)$ by the choice of $\beta$ and $\frakc(\fraka)$. From \propref{P:1.N}, \eqref{E:FC2.N}, \eqref{E:FC3.N}, \eqref{E:FC4.N} and \eqref{E:10.N}, we find that the $\beta$-th Fourier coefficient $\bfa_\beta(\holES,\frakc)$ of $\bbE^h_\ads$ at the cusp $(\OF,\frakc^{-1})$ is given by
\begin{align*}\bfa_\beta(\holES,\frakc)=&\frac{1}{\abs{D_\cF}_\R}\cdot\prod_{v\in\bdh}W_\beta(\Section,\MX{1}{}{}{\bfc^{-1}_v})|_{s=0}\\
=&\ads_+(\bfc)\prod_{w|\Csplit}\ads_w(\beta)\cdot \prod_{v|\frakC^-}A_\beta(\ads_v)\cdot\abs{\cD_{\cF}^{-1}}_{\cF_v}\addchar^\circ(-2^{-1}t_v\beta).
\end{align*}
It is clear that the non-vanishing of $\bfa_\beta(\holES,\frakc)\pmod{\frakm}$ is equivalent to
\[A_{\beta}(\ads_v)=A_{\Beth_v}(\ads_v)\not\con 0\pmod{\frakm}\text{ for every }v|\frakC^-.\qedhere\]\end{proof}

Now we are ready to prove our main result.
\begin{thm}\label{T:main.N} Suppose that $\frakl$ splits in $\cK$. Let $\ads$ be a self-dual Hecke character such that \begin{mylist}
\item[\rm{(L)}] $\mu_p(\ads_v)=0$ for every $v|\frakC^-,$
\item[\rm{(R)}] The global root number $W(\nads)=1,$
\item[\rm{(C)}] $\Cram$ is square-free.
\end{mylist}
Then \NV holds for $(\ads,\frakl)$.
\end{thm}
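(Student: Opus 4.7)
The plan is to verify the hypotheses of \propref{P:main.N}, whose conclusion is precisely (NV) for $(\ads,\frakl)$. The global root number condition $W(\nads)=1$ is furnished by hypothesis (R), and the splitting of $\frakl$ in $\cK$ is given. What remains is, for each place $v \mid \frakC^-$, to construct $\Beth_v \in \cF_v^\x$ satisfying
\begin{enumerate}
\item[(i)] $A_{\Beth_v}(\ads_v) \not\equiv 0 \pmod{\frakm}$, and
\item[(ii)] $W(\nads_v)\tau_{\cK/\cF}(\Beth_v) = \nads_v(2\skewhf)$.
\end{enumerate}
By \lmref{L:MSROOT.N}, condition (ii) amounts to a sign constraint $\tau_{\cK_v/\cF_v}(\Beth_v) = \epsilon_v$, with $\epsilon_v = (-1)^{a(\nads_v)+v(\frakc(\OK))}$ at inert $v$, and $\epsilon_v = \pm 1$ some explicit sign at ramified $v$.

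I would split the argument by the local type of $v$. For an inert $v \mid \Cinert$, self-duality of $\ads$ yields $\nads_v|_{\cF_v^\x} = \tau_{\cK_v/\cF_v}$, so the refined clause of \lmref{L:2.N} applies under hypothesis (L) and produces $\Beth_v$ with $v(\Beth_v) = -w(\frakC^-)$ satisfying (i). At inert $v$ one has $\tau_{\cK/\cF}(\Beth_v) = (-1)^{v(\Beth_v)}$; combined with condition (d2) of \secref{S:CMpoint} (which forces $v(\frakc(\OK)) = 0$ at $v \mid \frakC^-$) and the identification $a(\nads_v) = w(\frakC^-)$, the parity (ii) is then automatic.

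For a ramified $v \mid \Cram$, hypothesis (C) forces $w(\frakC^-) = 1$, bringing the problem into the scope of \propref{P:formulaRamified.N}, where $\wtd A_{\Beth_v}(\ads_v)$ is an affine function of $\nads_v(\Beth_v)$ with both coefficients nonzero modulo $\frakm$ by (L). Since $\tau_{\cK_v/\cF_v}$ is ramified at $v$, each of the two cosets of $\cF_v^\x$ modulo $\ker\tau_{\cK_v/\cF_v}$ meets multiple residue classes of $\nads_v$ modulo $\frakm$. Hence I can choose $\Beth_v$ in the sign-compatible coset for (ii) while simultaneously avoiding the unique residue class on which the affine combination vanishes, securing (i).

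The principal obstacle is this final sufficiency statement for ramified $v$: one must verify that $\nads_v \pmod{\frakm}$ and $\tau_{\cK_v/\cF_v}$ cut out genuinely distinct partitions of $\cF_v^\x$. I expect this to be settled by computing the ratio of the two affine coefficients of \propref{P:formulaRamified.N}(3) explicitly --- it equals $W(\nads_v)\nads_v(2\skewhf)^{-1}$ up to an unramified twist by \lmref{L:MSROOT.N}(1) --- and then invoking (L) to exclude the degenerate coincidence; the dyadic places $v \mid 2$ are treated by appealing to clause (1) of \propref{P:formulaRamified.N} in lieu of (3), completing the verification of the theorem.
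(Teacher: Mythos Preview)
Your treatment of the inert case is correct and matches the paper exactly: the refined clause of \lmref{L:2.N} gives $\Beth_v$ with $v(\Beth_v)\equiv -w(\frakC^-)\pmod 2$, and the parity check via \lmref{L:MSROOT.N}(3) then yields (ii).

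Your ramified argument, however, has a genuine gap. Self-duality means $\nads_v|_{\cF_v^\x}=\tau_{\cK_v/\cF_v}$, so the value $\nads_v(\Beth_v)$ is \emph{completely determined} by $\tau_{\cK_v/\cF_v}(\Beth_v)$. Your claim that ``each of the two cosets of $\cF_v^\x$ modulo $\ker\tau_{\cK_v/\cF_v}$ meets multiple residue classes of $\nads_v$'' is therefore false: the two partitions coincide, and once (ii) fixes the coset, the affine expression in \propref{P:formulaRamified.N}(3) takes a single value. There is no avoidance argument available.

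The paper resolves this directly. One first observes that (C) forces $v\nmid 2$: self-duality gives $\ads_v|_{1+\uf_v\cO_{\cF_v}}=\tau_{\cK_v/\cF_v}|_{1+\uf_v\cO_{\cF_v}}$, but at a wildly ramified dyadic place the quadratic character has $\cF_v$-conductor at least $2$, contradicting $w(\frakC^-)=1$. Hence \propref{P:formulaRamified.N}(3) applies. One then chooses $\Beth_v$ satisfying (ii), i.e.\ $W(\nads_v)=\tau_{\cK_v/\cF_v}(\Beth_v)\nads_v(2\skewhf)$, and simply computes: using $2\skewhf=d_{\cF_v}\delta_v$, both summands in the bracket of \propref{P:formulaRamified.N}(3) equal $\nads_v(\skewhf)$, so
\[
A_{\Beth_v}(\ads_v)=2\,\nads_v(\skewhf)\cdot\ads_v(-d_{\cF_v}^{-1})\abs{\uf_v}^{1/2}\not\equiv 0\pmod{\frakm}
\]
since $p>2$. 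The point is not that (i) and (ii) can be arranged independently, but that (ii) \emph{forces} the two terms to interfere constructively rather than destructively. Your final paragraph gestures toward computing the ratio, which is the right instinct, but the conclusion to draw is the opposite of what you wrote: the ratio equals $+1$ under (ii), not that (L) separates the characters.
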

\begin{proof}
It suffices to verify that for each $v|\frakC^-$ there exists $\eta_v\in\cF_v^\x$ which satisfies (i) and (ii) in \propref{P:main.N}. For $v|\Cram$, we take $\Beth_v\in\cF_v^\x$ such that $W(\nads_v)=\qchKF(\Beth_v)\nads_v(2\skewhf)$. Note that the assumption (C) implies that $v\ndivides 2$. By \propref{P:formulaRamified.N} (3)
we find that
\begin{align*}A_{\Beth_v}(\ads)=&(\nads_v(-2\UF_v^{-1}d_{\cF_v})+\nads_v(2^{-1}\Beth_v)W(\nads_v))\cdot\ads_v(-2^{-1}d_F^{-1})\abs{\uf}^\onehalf\\
=&(\nads_v(\skewhf)+\nads(\skewhf))\ads_v(-2^{-1}d_F^{-1})\abs{\uf}^\onehalf\quad(2\skewhf=d_{\cF_v}\UF_v)\\
=&2\nads_v(\skewhf)\ads_v(-2^{-1}d_F^{-1})\abs{\uf}^\onehalf\not\con 0\pmod{\frakm}.\end{align*}
For $v|\Cinert$, we choose $\Beth_v$ to be as in \lmref{L:2.N}, so $A_{\Beth_v}(\ads_v)\not\con 0\pmod{\frakm}$ and \[v(\Beth_v)=w(\frakC^-)=a(\nads_v)+v(\frakc(\OK)).\] It follows from \lmref{L:MSROOT.N} (3) that $W(\nads_v)=\qchKF(\Beth_v)\nads_v(2\skewhf)$.
\end{proof}

\begin{Remark}We give a few remarks on \thmref{T:main.N}:\begin{enumerate}
\item The assumption (C) has been removed in view of \cite[Prop. 6.3]{Hsieh:VMU}.
\item Let $\ads_1$ be a self-dual character and $\nu$ be a finite order character such that $\nu$ has prime-to-$p$ conductor and $\nu\con 1\pmod{\frakm}$. As pointed out by the referee, one can prove \thmref{T:main.N} for $\ads:=\ads_1\nu$, keeping (L) and (C) but replacing (R) by the condition (Rm): $W(\ads^*)\con 1\pmod{\frakm}$, which implies the condition (R) for $\ads_1$. Indeed, as $\nu$ must have square-free conductor, (C) holds for $\ads_1$, and (L) obviously holds for $\ads_1$ as well. Thus $\ads_1$ satisfies the hypothese in \thmref{T:main.N}, and for every $u\in\OF_\frakl$ and $r$, we can choose $\beta\in\cF_+$ as in the proof of \propref{P:main.N} such that $\bfa_\beta(\bbE^h_{\ads_1},\frakc)\not\con 0\pmod{\frakm}$. By the condition (L) the supports of the conductors of $\ads$ and $\ads_1$ only differ by split primes, we find that $\bfa_\beta(\holES,\frakc)\not\con 0\pmod{\frakm}$.
\item From the expression of $A_\beta(\ads)$ in \eqref{E:ABchi.N}, it is not difficult to deduce that if $\mu_p(\ads_v)>0$ for some place $v|\frakC^-$, then $A_\beta(\ads_v)\con 0\pmod{\frakm}$ for all $\beta\in\cF_v^\x$, and hence $\holES\con 0\pmod{\frakm}$ by $q$-expansion principle (\cf \cite[Prop.6.2]{Hsieh:VMU} for the self-dual case). In addition, in the self-dual case, we can deduce from \cite[Lemma 6.1]{Hsieh:VMU} that if $W(\nads)=-1$, then $\bfa_\beta(\holES,\frakc)=0$ for all $\beta$. It follows that $\holES|_{\frakc}=0$.
\item The assumption that $\frakl$ splits in $\cK$ is only used in \propref{P:main.N} to assure the local root number $W(\nads_\frakl)$ for all $u\in\OF_\frakl$ and $r$ satisfies certain epsilon dichotomy $W(\nads_\frakl)\tau_{\cK_\frakl/\cF_\frakl}(\eta_v)=\nads_\frakl(2\skewhf)$ whenever $\eta_v\con u\pmod{\frakl^r}$. This is false for nonsplit $\frakl$. For example, when $\frakl$ is inert, this dichotomy holds precisely when $v_\frakl(\eta_v)\con v_\frakl(\frakc(\OK))\pmod{2}$. To treat nonsplit $\frakl$, it seems that one has to refine Theorem 3.2 in \cite{Hida:nonvanishingmodp} (at least when $\frakl$ has degree one over $\Q$).%, \ie to figure out exactly what $-v$ can appear in the proof .

\end{enumerate}
%This suggests that the choice of our Eisenstein series should be optimal.
\end{Remark}

\bibliographystyle{amsalpha}
\bibliography{C:/users/MingLun/texsetting/mybib}

\providecommand{\bysame}{\leavevmode\hbox to3em{\hrulefill}\thinspace}
\providecommand{\MR}{\relax\ifhmode\unskip\space\fi MR }
% \MRhref is called by the amsart/book/proc definition of \MR.
\providecommand{\MRhref}[2]{%
  \href{http://www.ams.org/mathscinet-getitem?mr=#1}{#2}
}
\providecommand{\href}[2]{#2}
\begin{thebibliography}{Hid04b}

\bibitem[Fin06]{Finis:nonvanishingell}
T.~Finis, \emph{Divisibility of anticyclotomic {$L$}-functions and theta
  functions with complex multiplication}, Ann. of Math. (2) \textbf{163}
  (2006), no.~3, 767--807.

\bibitem[Hid04a]{Hida:nonvanishingmodp}
H.~Hida, \emph{Non-vanishing modulo {$p$} of {H}ecke {$L$}-values}, Geometric
  aspects of Dwork theory. Vol. I, II, Walter de Gruyter GmbH \& Co. KG,
  Berlin, 2004, pp.~735--784.

\bibitem[Hid04b]{Hida:p-adic-automorphic-forms}
\bysame, \emph{{$p$}-adic automorphic forms on {S}himura varieties}, Springer
  Monographs in Mathematics, Springer-Verlag, New York, 2004.

\bibitem[Hid07]{Hida:nonvanishingnew}
\bysame, \emph{Non-vanishing modulo {$p$} of {H}ecke {$L$}-values and
  application}, {$L$}-functions and {G}alois representations, London Math. Soc.
  Lecture Note Ser., vol. 320, Cambridge Univ. Press, Cambridge, 2007,
  pp.~207--269.

\bibitem[Hid10]{Hida:mu_invariant}
\bysame, \emph{The {I}wasawa {$\mu$}-invariant of {$p$}-adic {H}ecke
  {$L$}-functions}, Ann. of Math. (2) \textbf{172} (2010), no.~1, 41--137.

\bibitem[Hsi11]{Hsieh:ESU21}
M.-L. Hsieh, \emph{Eisenstein congruence on unitary groups and {I}wasawa main
  conjecture for {CM} fields}, Preprint is available at
  "http://www.math.ntu.edu.tw/~mlhsieh/research.htm", 2011.

\bibitem[Hsi12]{Hsieh:VMU}
\bysame, \emph{On the $\mu$-invariant of anticyclotomic $p$-adic
  {$L$}-functions for {CM} fields}, to appear in J. Reine Angew. Math.
  DOI:10.1515/crelle-2012-0056, 2012.

\bibitem[HT93]{HidaTilouine:KatzPadicL_ASENS}
H.~Hida and J.~Tilouine, \emph{Anti-cyclotomic {K}atz {$p$}-adic
  {$L$}-functions and congruence modules}, Ann. Sci. \'Ecole Norm. Sup. (4)
  \textbf{26} (1993), no.~2, 189--259.

\bibitem[JL70]{Jacquet_Langlands:GLtwo}
H.~Jacquet and R.~P. Langlands, \emph{Automorphic forms on {${\rm GL}(2)$}},
  Lecture Notes in Mathematics, Vol. 114, Springer-Verlag, Berlin, 1970.

\bibitem[Kat78]{Katz:p_adic_L-function_CM_fields}
N.~Katz, \emph{{$p$}-adic {$L$}-functions for {CM} fields}, Invent. Math.
  \textbf{49} (1978), no.~3, 199--297.

\bibitem[Kot92]{Kottwitz:Points-On-Shimura-Varieties}
R.~Kottwitz, \emph{Points on some {S}himura varieties over finite fields},
  Journal of AMS \textbf{5} (1992), no.~2, 373--443.

\bibitem[MS00]{Murase-Sugano:Local_theory_primitive_theta}
A.~Murase and T.~Sugano, \emph{Local theory of primitive theta functions},
  Compositio Math. \textbf{123} (2000), no.~3, 273--302.

\bibitem[Roh82]{Rohrlich:root_number}
D.~Rohrlich, \emph{Root numbers of {H}ecke {$L$}-functions of {CM} fields},
  Amer. J. Math. \textbf{104} (1982), no.~3, 517--543.

\bibitem[SGA64]{SGA3-2}
\emph{Sch\'emas en groupes. {II}: {G}roupes de type multiplicatif, et structure
  des sch\'emas en groupes g\'en\'eraux}, S\'eminaire de G\'eom\'etrie
  Alg\'ebrique du Bois Marie 1962/64 (SGA 3). Dirig\'e par M. Demazure et A.
  Grothendieck. Lecture Notes in Mathematics, Vol. 152, Springer-Verlag,
  Berlin, 1962/1964.

\bibitem[Shi98]{Shimura:ABV-with-CM}
G.~Shimura, \emph{Abelian varieties with complex multiplication and modular
  functions}, Princeton Mathematical Series, vol.~46, Princeton University
  Press, Princeton, NJ, 1998.

\bibitem[Tat79]{Tate:Number_theoretic_bk}
J.~Tate, \emph{Number theoretic background}, Automorphic forms, representations
  and {$L$}-functions ({P}roc. {S}ympos. {P}ure {M}ath., {O}regon {S}tate
  {U}niv., {C}orvallis, {O}re., 1977), {P}art 2, Proc. Sympos. Pure Math.,
  XXXIII, Amer. Math. Soc., Providence, R.I., 1979, pp.~3--26.

\end{thebibliography}
\end{document}